\documentclass[11pt,a4paper]{article}

\usepackage[T1]{fontenc}
\usepackage[utf8]{inputenc}
\usepackage{lmodern}
\usepackage{amsmath,amssymb,amsthm,mathtools}
\usepackage{microtype}
\usepackage[margin=27mm]{geometry}
\usepackage{xcolor}
\usepackage[colorlinks=true,linkcolor=blue!55!black,citecolor=blue!55!black,
            urlcolor=blue!55!black]{hyperref}

\allowdisplaybreaks
\newtheorem{theorem}{Theorem}[section]
\newtheorem{proposition}[theorem]{Proposition}
\newtheorem{lemma}[theorem]{Lemma}
\newtheorem{corollary}[theorem]{Corollary}
\theoremstyle{definition}
\newtheorem{definition}[theorem]{Definition}
\theoremstyle{remark}

\newcommand{\Z}{\mathbb Z}
\newcommand{\R}{\mathbb R}
\newcommand{\Cc}{C_c}
\newcommand{\cE}{\mathcal E}
\newcommand{\cD}{\mathcal D}
\newcommand{\cO}{\mathcal O}
\newcommand{\abs}[1]{\lvert #1\rvert}
\newcommand{\norm}[1]{\lVert #1\rVert}
\newcommand{\lap}{\Delta_{\!d}}
\newcommand{\ind}{\mathbf 1}
\newcommand{\dd}{\mathrm d}

\title{Sharp discrete Hardy constants in dimensions three and four\\
and strict upper bounds from dimension nine}
\author{Carlos Lizama\\[2mm]
\small Departamento de Matem\'atica y Ciencia de la Computaci\'on, Facultad de Ciencia\\
\small Universidad de Santiago de Chile\\
\small Las Sophoras 173, Estaci\'on Central, Santiago, Chile\\
\small \texttt{carlos.lizama@usach.cl}}
\date{}

\begin{document}
\maketitle

\begin{abstract}
For $N\ge3$, let $C(N)$ be the optimal constant in the nearest-neighbour
Hardy inequality on $\mathbb Z^N$ with $u(0)=0$, and set
$A_N=(N-2)^2/4$. We prove that the continuum coefficient remains an upper
bound, $C(N)\le A_N$, in every dimension, and determine the exact values
$C(3)=A_3=1/4$ and $C(4)=A_4=1$. In higher dimensions we show
$C(N)<A_N$ for $N=9,10$ and obtain the explicit bound
\[
 C(N)\le 3N-\sqrt{N^2+8N-8}<2N\qquad(N\ge3),
\]
which lies below $A_N$ from dimension eleven onward. The low-dimensional
equalities follow from shifted radial supersolutions, angular convexity, and
a discrete ground-state representation. We also show that this shifted-power
mechanism cannot work at the continuum coefficient from dimension five
onward. Dimension nine is treated by a Gaussian Rayleigh--Ritz construction
combined with exact Jacobi theta-function estimates, while the higher-dimensional
bounds are obtained through finite-dimensional orbit compressions. Finally, we
derive positive spatial remainders in dimensions three and four and a spectral
consequence for the associated discrete Schr\"odinger operators in the
high-dimensional regime.
\end{abstract}

\medskip
\noindent\textbf{2020 Mathematics Subject Classification.}
Primary 26D10, 39A12; Secondary 31C20, 31C25, 47B39.

\noindent\textbf{Keywords.}
Discrete Hardy inequality; sharp Hardy constant; lattice Laplacian;
ground-state representation; criticality; discrete Schr\"odinger operator;
Rayleigh--Ritz method; Jacobi theta function.

\section{Introduction}

Hardy inequalities quantify the control of concentration near a singular
point by kinetic energy.  Their classical Euclidean form states that, for
$N\ge3$ and $u\in C_c^\infty(\R^N)$,
\begin{equation}\label{eq:continuum-Hardy}
 \int_{\R^N}|\nabla u(x)|^2\,\dd x
 \ge \frac{(N-2)^2}{4}
 \int_{\R^N}\frac{|u(x)|^2}{|x|^2}\,\dd x.
\end{equation}
The coefficient $(N-2)^2/4$ is optimal.  Besides its classical role in
analysis, this number is the positivity threshold for inverse-square
Schr\"odinger forms and therefore enters stability, spectral and evolution
questions with singular potentials; see, for example,
\cite{Hardy1920,FrankSeiringer2008}.

On the lattice $\Z^N$, derivatives are replaced by nearest-neighbour
differences.  For finitely supported $u:\Z^N\to\mathbb C$ satisfying
$u(0)=0$, consider
\begin{align}
 \cE_N[u]
 &:=\sum_{j=1}^N\sum_{n\in\Z^N}
       \abs{u(n+e_j)-u(n)}^2,\label{eq:def-energy}\\
 \cD_N[u]
 &:=\sum_{n\in\Z^N\setminus\{0\}}
       \frac{\abs{u(n)}^2}{\abs{n}^2}.\label{eq:def-denominator}
\end{align}
This is the standard nearest-neighbour formulation of the multidimensional
discrete Hardy problem used, in particular, in \cite{Gupta2023}.  Its sharp
scalar constant is
\begin{equation}\label{eq:def-CN}
 C(N):=\inf_{0\ne u\in\Cc(\Z^N),\,u(0)=0}
       \frac{\cE_N[u]}{\cD_N[u]},
\end{equation}
so that
\begin{equation}\label{eq:lattice-Hardy-intro}
 \cE_N[u]\ge C(N)\cD_N[u]
\end{equation}
holds for every admissible $u$.  We write
\begin{equation}\label{eq:def-AN}
 A_N:=\frac{(N-2)^2}{4}
\end{equation}
for the continuum Hardy coefficient.

The loss of exact dilation invariance on $\Z^N$ makes the comparison between
$C(N)$ and $A_N$ nontrivial.  Large-scale lattice geometry still remembers
the Euclidean problem, but the scalar quotient in \eqref{eq:def-CN} is also
sensitive to finite-scale geometry near the singular vertex.  Thus the
natural question is whether the continuum threshold survives discretization,
that is, whether
\begin{equation}\label{eq:critical-form-intro}
 \cE_N[u]-A_N\cD_N[u]
\end{equation}
is nonnegative for every admissible $u$.

It is useful here to distinguish the scalar problem from the theory of
spatially varying Hardy weights.  One may seek an optimal function $W(n)$
such that
\[
 \cE_N[u]\ge\sum_{n\ne0}W(n)|u(n)|^2,
\]
or instead keep the prescribed weight $|n|^{-2}$ and optimize only its
scalar coefficient, as in \eqref{eq:def-CN}.  These questions are related but
not equivalent.  Ground-state representations connect sharp constants,
pointwise optimal weights and operator criticality.  In the continuum and
fractional settings this viewpoint is developed in
\cite{FrankSeiringer2008,DevyverFraasPinchover2014}; on graphs, see
\cite{KellerPinchoverPogorzelskiCMP2018,KellerPinchoverPogorzelski2020,Fischer2024}.
The recent Oberwolfach report \cite{BerchioKellerPinchoverRoncal2025}
illustrates the breadth of current activity in this direction.

The one-dimensional discrete theory is comparatively well developed.  The
classical inverse-square weight admits pointwise improvements
\cite{KellerPinchoverPogorzelski2018,KrejcirikStampach2022}, and higher-order,
weighted and fractional variants have been studied in
\cite{KellerNietschmann2023,KellerPinchoverPogorzelski2021,Gupta2024,HuangYe2024}.
Factorization and criticality methods yield improved Hardy--Rellich weights
and sharp transitions for powers of the half-line Laplacian
\cite{GerhatKrejcirikStampach2025,GerhatKrejcirikStampachRMI2025}; see also
\cite{DasFuente2026,StampachWaclawek2026}.  The multidimensional scalar
problem brings an additional angular component.

For the Euclidean lattice in several dimensions, Rozenblum--Solomyak
\cite{RozenblumSolomyak2009} obtained early spectral estimates, while
Kapitanski--Laptev \cite{KapitanskiLaptev} compared continuous and discrete
Hardy inequalities.  Keller--Lemm \cite{KellerLemm2023} identified the
continuum inverse-square asymptotics of optimal Hardy weights on $\Z^N$.
More recently, Hake--Keller--Pogorzelski
\cite[Theorem~1.1]{HakeKellerPogorzelski2026} constructed optimal Hardy
weights for fractional lattice Laplacians, including the local case; the
leading coefficient in that case is again $A_N$.  Dyda \cite{Dyda2026}
has also obtained a broad family of local and fractional lattice Hardy
inequalities with explicit, generally nonoptimal constants.  These results
explain why $A_N$ is the natural coefficient at infinity, but they do not by
themselves determine the sharp scalar in \eqref{eq:def-CN}.

Gupta \cite{Gupta2023} proved that $C(N)$ has order $N$ as $N\to\infty$ and
gave explicit linear trial upper bounds.  The sharper asymptotic results
\cite{Gupta2026,HuangYe2026} determine the leading high-dimensional
behaviour.  Those results are decisive for large $N$, whereas the fixed
low-dimensional problem asks a different question: whether the continuum
coefficient itself is still the scalar threshold.  The results of the present
paper show that $A_N$ remains an upper bound in every dimension, while a
second explicit bound of linear size becomes stronger from dimension eleven
onward.  Their comparison reveals distinct low- and high-dimensional regimes
rather than a single estimate governing all $N$.

The main result is most naturally stated by separating the dimensional
ranges in which the two upper bounds carry different information.

\begin{theorem}\label{thm:main}
For every $N\ge3$ one has
\begin{equation}\label{eq:main-universal-bound}
 C(N)\le A_N.
\end{equation}
More precisely,
\begin{equation}\label{eq:main-values}
 C(3)=A_3=\frac14,\qquad C(4)=A_4=1,
\end{equation}
while
\begin{equation}\label{eq:main-strict-9-10}
 C(N)<A_N\qquad(N=9,10).
\end{equation}
If
\[
 \Lambda_2(N):=3N-\sqrt{N^2+8N-8},
\]
then
\begin{equation}\label{eq:main-Nge11}
 C(N)\le \Lambda_2(N)<\min\{A_N,2N\}
 \qquad(N\ge11).
\end{equation}
\end{theorem}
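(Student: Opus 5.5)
The theorem bundles four claims, and I would prove them separately, since each needs a different mechanism.

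\emph{Universal upper bound $C(N)\le A_N$.} I would use a scaling argument that transplants continuum near-optimizers. Fix $\varphi\in C_c^\infty(\R^N\setminus\{0\})$ and set $u_R(n)=\varphi(n/R)$, which vanishes at $0$. Riemann-sum approximation gives
\[
\cE_N[u_R]=R^{N-2}\Bigl(\int|\nabla\varphi|^2+o(1)\Bigr),\qquad
\cD_N[u_R]=R^{N-2}\Bigl(\int\frac{|\varphi|^2}{|x|^2}+o(1)\Bigr).
\]
Here the difference quotient $R(\varphi((n+e_j)/R)-\varphi(n/R))$ converges uniformly to $\partial_j\varphi$, and the support stays away from the origin. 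Hence $C(N)\le \int|\nabla\varphi|^2/\int|\varphi|^2|x|^{-2}$. Taking the infimum over such $\varphi$ gives the continuum Hardy constant. Since points have zero capacity for $N\ge3$, this infimum equals $A_N$.

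\emph{The lower bounds $C(3)\ge\frac14$ and $C(4)\ge1$.} I would use a ground-state (supersolution) argument on the graph. By the Agmon--Allegretto--Piepenbrink principle on $\Z^N\setminus\{0\}$ with $u(0)=0$, it is enough to exhibit $w>0$ on $\Z^N\setminus\{0\}$, with any nonnegative value at $0$, such that
\[
(-\lap w)(n)\ge A_N\,\frac{w(n)}{|n|^2}\qquad\text{for } n\ne0 .
\]
The discrete ground-state representation then writes $\cE_N[u]-A_N\cD_N[u]$ as a sum of nonnegative terms $w(n)w(m)\,|u(n)/w(n)-u(m)/w(m)|^2$ plus a nonnegative potential term. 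As candidates I would take shifted radial powers $w(n)=(|n|^2+a)^{-(N-2)/4}$ with a tuned shift $a>0$.

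The verification splits into two parts. Away from the origin, a Taylor expansion of $-\lap w$ gives the leading term $A_N w/|n|^2$. The correction has the form $c\,w/|n|^4$ plus anisotropic fourth-order terms $\sum_j n_j^4$, and angular convexity (the inequality $\sum n_j^4\ge|n|^4/N$) controls the sign of the anisotropic part. Near the origin, one checks finitely many lattice shells directly. The hardest step is choosing $a$ so that the inequality holds uniformly on all shells: this requires exact, rather than asymptotic, estimates on $\lap$ of the shifted power, and I would reduce it to a one-variable polynomial inequality in $r=|n|^2$, handling $\sum n_j^4$ by convexity. I expect this to close only for $N=3,4$, which is consistent with the abstract's remark that the mechanism fails for $N\ge5$.

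\emph{Strict inequality for $N=9,10$.} Here I would use Rayleigh--Ritz with trial functions concentrated at small scales, where lattice effects lower the quotient. A natural choice is the Gaussian $u(n)=e^{-t|n|^2}$ for $n\ne0$, with $u(0)=0$, possibly in a linear combination with one or two nearest-shell indicators. Its energy factorizes through Jacobi theta functions $\theta(q)=\sum_k q^{k^2}$, since
\[
\sum_{n\in\Z^N}e^{-t|n|^2}=\theta(e^{-t})^N .
\]
The denominator is handled with $|n|^{-2}=\int_0^\infty e^{-s|n|^2}\,ds$, which turns it into an integral of products of theta functions. Choosing $t$ and rigorously bounding the theta values with a few terms and tail estimates would then give a quotient below $A_9=49/4$ and $A_{10}=16$. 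Obtaining rigorous numerics with a small enough margin is the delicate part.

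\emph{The bound for $N\ge11$.} I would compress onto the two-dimensional space spanned by the indicator of the first shell $\{\pm e_j\}$ and the indicator of the second orbit (for instance $\{\pm e_i\pm e_j\}$ or $\{\pm2e_j\}$). Computing the $2\times2$ energy and denominator matrices and taking the smallest generalized eigenvalue should give a quadratic equation whose root is $3N-\sqrt{N^2+8N-8}$. It then remains to check two elementary inequalities. The bound $\Lambda_2<2N$ is equivalent to $N^2>N^2+8N-8$ failing in the right direction, that is, to $N^2+8N-8>N^2$, which holds for all $N\ge3$. The bound $\Lambda_2<A_N$ for $N\ge11$ is a polynomial inequality to be verified directly.
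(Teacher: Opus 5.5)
\textbf{The gap is at $N=9$.} The problem is the trial family itself, not the rigour of the numerics. A single lattice Gaussian $g_q$ on $\Z^9$ has quotient $\cE_9[g_q]/\cD_9[g_q]$ tending to $2N=18$ as $q\to0$, and to the continuum value $N(N-2)/4=63/4$ as $q\to1$. My own evaluation with the theta formulas puts its minimum at about $13.1$, near $q\approx0.35$. Freeing the values on the shells $|n|^2=1,2$ only lowers the best Ritz value to roughly $12.7$, near $q\approx0.6$. This is still well above $A_9=49/4$: one Gaussian scale cannot follow the critical profile $|n|^{-7/2}$ beyond the first few shells.

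Your heuristic that concentration near the origin pushes the quotient down is the wrong one in this dimension. Proposition~\ref{prop:N9-coordinate-obstruction} shows that every combination of the coordinate orbits in $\{0,\pm1\}^9$ stays strictly above $A_9$. The missing idea is a multi-scale trial function. The paper samples the Gamma--Laplace representation $s^{-7/4}=\Gamma(7/4)^{-1}\int_0^\infty t^{3/4}e^{-st}\,\dd t$ at three well-separated scales, taking $u_9=256\,g_{7/25}+16\,g_{18/25}+g_{23/25}$. Even then the certified quotient is only about $12.236$. So the one-sided theta majorants for the energy, and the Jacobi bound $\vartheta(e^{-s})\ge\sqrt{\pi/s}$ for the full tails of $\cD_9$, must be fairly sharp.

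For $N=10$ your family is adequate. As $q\to0$, $q^{-3}g_q\ind_{\{|n|^2\ge3\}}$ tends to the indicator of $\cO_{1^3}$, so your Ritz values approach $\Lambda_3(10)<A_{10}$, which is the paper's argument.

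\textbf{Smaller corrections.} In the supersolution step your convexity inequality points the wrong way. With $F(q)=(q+a)^{-(N-2)/4}$, every even Taylor coefficient of $F(s+1\pm2x)$ in $x$ is positive. Hence the neighbour sum increases with each power sum $\sum_jn_j^{2m}$, and the axis is the worst configuration. What you need is $\sum_jn_j^{2m}\le s^m$ for all $m$, not $\sum_jn_j^4\ge s^2/N$. The paper handles all orders at once through convexity of $t\mapsto F(s+1+2\sqrt t)+F(s+1-2\sqrt t)$ (Lemma~\ref{lem:angular}).

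The shift is not free either. The expansion at infinity forces $a\ge\beta_N^*$. The paper works at the endpoints $75/64$ and $17/8$, where the $s^{-2}$ term cancels; it then checks an exact $O(s^{-3})$ rational residual for all $s\ge2$ and treats $s=1$ separately.

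For $N\ge11$ the second orbit must be $\{\pm e_i\pm e_j\}$. Pairing the first shell with $\{\pm2e_j\}$ gives $5N-\sqrt{9N^2+4}$, which is just below $2N$ and above $A_{11}$.

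Your scaling argument for $C(N)\le A_N$ is correct and shorter than the paper's logarithmic-cutoff computation. It pays for this by quoting the sharpness of the continuum Hardy constant on $C_c^\infty(\R^N\setminus\{0\})$.
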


The equalities in dimensions three and four are significant precisely
because they are not forced by the continuum limit.  In quadratic-form
language, $C(N)$ is the largest coupling $\mu$ for which
$\cE_N-\mu\cD_N$ remains nonnegative.  Thus in dimensions three and four the
nearest-neighbour lattice preserves exactly the continuum positivity
threshold for the inverse-square interaction, despite the loss of dilation
invariance.  From the point of view of discretization, this means that the
infinite lattice does not introduce a lower spurious coupling threshold in
these dimensions.  In dimensions nine and ten the continuum coefficient is
already too large, while from dimension eleven onward the explicit linear
bound in \eqref{eq:main-Nge11} gives a strictly smaller scale.  The low-dimensional forms also
admit positive spatial remainders, showing that sharpness of the scalar
coefficient is distinct from operator criticality.

A useful way to read these results is as a competition between two scales.
The geometry at infinity fixes $A_N$ as the universal continuum benchmark:
it is approached by lattice functions obtained from the continuum ground-state
profile and spreading over larger and larger annuli.  The opposite direction is controlled by finite-scale
lattice geometry.  In low dimension a shifted version of the continuum
radial ground state survives every lattice shell after the angular variables
are compressed by convexity.  In high dimension finite-dimensional orbit
compressions already lower the Rayleigh quotient below the continuum level,
and in dimension nine the complete coordinate ladder is still insufficient,
so a genuinely global radial Ritz space is needed.  The three mechanisms are
therefore different manifestations of the same question: whether the
large-scale continuum threshold can withstand the local geometry of the
lattice.

After completion of the present work, the author became aware of the recent
preprint of Alpay \cite{Alpay2026}, which also establishes the value
$C(3)=1/4$.  The proof there is different: it uses a reciprocal edge field
and an edgewise completion of squares, whereas the argument developed here is
based on shifted radial supersolutions and convex reduction of the angular
variables.  The latter framework treats dimensions three and four within the
same scheme.

The proof of Theorem~\ref{thm:main} follows this distinction in a linear
sequence of steps.  Section~2 first settles the large-scale side of the
problem.  A logarithmically truncated continuum
ground-state profile is sampled on the lattice, and a lattice--integral
comparison shows that its Rayleigh quotient tends to $A_N$.  This gives the
universal upper bound $C(N)\le A_N$.  To obtain the reverse inequality in the
dimensions where the continuum threshold survives, we then develop a discrete
ground-state representation.  For radial profiles the nearest-neighbour sum
still depends on how $|n|^2$ is distributed among the coordinates; generalized
Ces\`aro kernels yield a convexity principle that reduces this angular
dependence to the axial configuration.  Section~3 applies this reduction to
shifted continuum powers.  Their expansion at infinity identifies the smallest
shift compatible with the coefficient $A_N$.  In dimensions three and four the
endpoint shift satisfies the pointwise supersolution inequality at every
lattice site, so the ground-state representation gives the exact equalities
$C(3)=A_3$ and $C(4)=A_4$, together with positive spatial remainders.  The same
analysis also shows that from dimension five onward no member of this
shifted-power family can work at $A_N$: the condition forced at infinity is
incompatible with an inner lattice shell.

For the strict upper bounds from dimension nine onward, Section~4 turns from
supersolutions to finite-dimensional variational tests.  The Gamma--Laplace
representation of the continuum critical power motivates Gaussian lattice
profiles, while tensor factorization expresses their energy and Hardy mass
through one-dimensional theta kernels and reduces the search to a generalized
eigenvalue problem.  In dimension nine, three separated Gaussian scales
produce a trial function with quotient strictly below $A_9$.  The scales and
coefficients are selected by this Rayleigh--Ritz strategy and then replaced by
simple rational data for which one-sided theta-function estimates give a fully
exact certificate.  Section~5 develops a second variational mechanism based on
nested coordinate-support orbits under the hyperoctahedral symmetry.  The
resulting tridiagonal Ritz compressions successively provide the relevant
crossings in dimensions twelve, eleven and ten, whereas the complete
coordinate chain remains above $A_9$; this explains why the global Gaussian
construction is needed in dimension nine.  Together these two variational
mechanisms give all the strict inequalities and explicit upper bounds stated
in Theorem~\ref{thm:main}.  Section~6 translates the strict high-dimensional
inequalities into the corresponding spectral statement and records the
remaining questions.  The elementary rational arithmetic used to certify the
dimension-nine Gaussian test is deferred to Appendix~A.

\section{Preliminaries}

For nearest neighbours $m\sim n$, the discrete Laplacian is
\begin{equation}\label{eq:laplacian}
 (\lap f)(n):=\sum_{m\sim n}\bigl(f(m)-f(n)\bigr)
 =\sum_{j=1}^N\bigl(f(n+e_j)+f(n-e_j)-2f(n)\bigr).
\end{equation}
Set $X_N:=\Z^N\setminus\{0\}$.  If $E_N$ denotes the set of unordered
nearest-neighbour edges of $\Z^N$, then for the admissible functions in
\eqref{eq:def-CN} the energy \eqref{eq:def-energy} has the equivalent
unoriented-edge form
\begin{equation}\label{eq:Dirichlet-energy}
 \cE_N[u]=\sum_{\{n,m\}\in E_N}|u(n)-u(m)|^2.
\end{equation}
Thus all nearest-neighbour edges of the lattice, including those incident with
the origin, are retained in the energy.  Auxiliary profiles defined initially
on $X_N$ will be assigned their value at the origin explicitly whenever the
Laplacian or a ground-state representation is evaluated.  Throughout the
paper $A_N$ denotes the continuum coefficient \eqref{eq:def-AN}, $C(N)$ the
scalar lattice constant \eqref{eq:def-CN}, $s=|n|^2$ the squared radial
variable, and $X_N=\Z^N\setminus\{0\}$.  The symbols $\cE_N$ and $\cD_N$
always refer to the quadratic forms
\eqref{eq:def-energy}--\eqref{eq:def-denominator}.

\subsection{The continuum coefficient as an upper bound}

The first step is to realize the continuum coefficient $A_N$ by an admissible
sequence in the scalar lattice quotient.  We use a logarithmic cutoff of the
formal continuum ground-state profile and compare the resulting lattice sums
with their Euclidean integrals.  The proof is included in detail because the
uniform control of the lattice--integral error is essential for the fixed-
dimensional conclusion.

\begin{lemma}\label{lem:cube-comparison}
Let $H_R\in C^1(\R^N)$ be compactly supported for each $R$, and suppose
there is a fixed ball $B$ such that $H_R$ and $\nabla H_R$ are uniformly
bounded on $B$, while
\[
 |\nabla H_R(x)|\le C|x|^{-N-1}\qquad(x\notin B)
\]
with $C$ independent of $R$.  Then
\begin{equation}\label{eq:cube-comparison}
 \sum_{n\in\Z^N}H_R(n)=\int_{\R^N}H_R(x)\,\dd x+O(1),
\end{equation}
where the error is uniform in $R$.
\end{lemma}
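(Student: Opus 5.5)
We compare each lattice value with the average of $H_R$ over the unit cube centred at that lattice point. Put $Q_n:=n+[-\tfrac12,\tfrac12]^N$. These cubes tile $\R^N$ up to null sets, so
\[
 \sum_{n\in\Z^N}H_R(n)-\int_{\R^N}H_R(x)\,\dd x
 =\sum_{n\in\Z^N}\int_{Q_n}\bigl(H_R(n)-H_R(x)\bigr)\,\dd x .
\]
Both sums are finite because $H_R$ has compact support. The whole task is therefore to bound $\sum_n\int_{Q_n}|H_R(n)-H_R(x)|\,\dd x$ by a constant independent of $R$.

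For each $x\in Q_n$ the mean value theorem along the segment $[n,x]$ gives
\[
 |H_R(n)-H_R(x)|\le \tfrac{\sqrt N}{2}\sup_{y\in Q_n}|\nabla H_R(y)| .
\]
This uses only $H_R\in C^1$ and the fact that the segment stays in the convex cube $Q_n$.

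We then split the lattice points into two groups. Let $B'$ be the $\sqrt N$-neighbourhood of the fixed ball $B$.

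\emph{Near cubes}, $Q_n\cap B\ne\emptyset$: there are finitely many of them, and their number does not depend on $R$. If a cube lies entirely in $B$, the uniform bound on $\nabla H_R$ over $B$ bounds its term directly. If a cube meets $\partial B$, the gradient bound inside $B$ still applies on $Q_n\cap B$. On $Q_n\setminus B$ the hypothesis gives $|\nabla H_R(y)|\le C|y|^{-N-1}$, and this is bounded because such $y$ lie outside $B$.

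At this point there is a subtlety. The decay hypothesis is only useful if $B$ does not contain a neighbourhood-free region around the origin, i.e.\ if $|y|$ is bounded below outside $B$. In the intended application $B$ is a ball centred at the origin, where $H_R$ is the cut-off ground state and the singularity sits at $0$. If $B$ does not contain the origin, then $|y|^{-N-1}$ could be unbounded near $0$. I would handle this by enlarging $B$ at the outset to a ball centred at $0$ containing the original one. On the added region, which stays away from the origin only if the original $B$ contains $0$, the integrability of $|y|^{-N-1}$ would still need justification. So I would state explicitly that $B\ni 0$, which is the natural reading of the hypothesis.

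With that convention, every $y\notin B$ satisfies $|y|\ge r_0>0$. Hence $|\nabla H_R|$ is bounded on $B'$ uniformly in $R$, and the near cubes contribute $O(1)$.

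\emph{Far cubes}, $Q_n\cap B=\emptyset$ and $|n|$ large: every $y\in Q_n$ has $|y|\ge |n|-\sqrt N/2\ge |n|/2$ once $|n|\ge\sqrt N$. Hence
\[
 \int_{Q_n}|H_R(n)-H_R(x)|\,\dd x\le C'|n|^{-N-1}.
\]
The tail sum $\sum_{n\ne0}|n|^{-N-1}$ converges, since it is comparable to $\int_1^\infty r^{-N-1}r^{N-1}\,\dd r$. The finitely many far cubes with $|n|<\sqrt N$ are absorbed into the near group by enlarging $B'$.

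Adding the two groups gives the bound $O(1)$ uniformly in $R$, which is \eqref{eq:cube-comparison}.

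The main obstacle is bookkeeping rather than analysis: the constant must not depend on $R$. The decay exponent $N+1$ is exactly what makes the far-field sum converge with a single first-order Taylor step. The constant-term (monopole) cancellation that a midpoint rule would give is not needed here.
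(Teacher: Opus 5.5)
Your proof is correct and follows the paper's argument: unit cubes centred at lattice points, the mean-value theorem on each cube, finitely many cubes near $B$ controlled by the local bounds, and the remaining cubes summed using the convergence of $\sum_{n\ne0}|n|^{-N-1}$ (the paper's $\int_2^\infty r^{N-1}r^{-N-1}\,\dd r$). Your added convention that $0$ is an interior point of $B$ is genuinely necessary, not cosmetic. If $B$ lies far from the origin, then $H_R(x)=(|x|^2+R^{-2})^{-N/2}\chi(x)$ with a fixed cutoff $\chi$ satisfies the stated hypotheses, yet $H_R(0)=R^N$ while the integral is only $O(\log R)$. The paper's proof uses this convention tacitly, as does its application to the cut-off ground state.
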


\begin{proof}
Decompose $\R^N$ into unit cubes centred at lattice points.  On a cube whose
fixed enlargement does not meet $B$, the mean-value theorem bounds the
sum--integral error by a constant multiple of the supremum of
$|\nabla H_R|$ there.  The total contribution of these cubes is uniformly
bounded because
\[
 \int_2^\infty r^{N-1}r^{-N-1}\,\dd r<\infty.
\]
Only finitely many cubes meet $B$, and their contribution is uniformly
bounded by the assumed local bounds.  Compact support makes both sides finite
for each $R$.
\end{proof}

\begin{proposition}\label{prop:upper}
For every $N\ge3$,
\[
 C(N)\le A_N.
\]
\end{proposition}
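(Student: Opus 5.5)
The plan is to test the quotient \eqref{eq:def-CN} on lattice samples of a logarithmically truncated continuum ground state and to compare the resulting sums with their integrals via Lemma~\ref{lem:cube-comparison}. Write $\alpha:=(N-2)/2$ and fix a smooth $\chi:\R\to[0,1]$ with $\chi=0$ on $(-\infty,0]$ and $\chi=1$ on $[1,\infty)$. For $R>e$ set
\[
 \phi_R(x):=|x|^{-\alpha}\,\eta_R(|x|),\qquad
 \eta_R(r):=\chi(\log r)\,\chi\bigl(2-\tfrac{\log r}{\log R}\bigr),
\]
so $\phi_R$ is smooth, vanishes for $|x|\le1$ (in particular $\phi_R(0)=0$), equals $|x|^{-\alpha}$ for $e\le|x|\le R$, and vanishes for $|x|\ge R^2$. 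Let $u_R:=\phi_R|_{\Z^N}$. It is finitely supported with $u_R(0)=0$, hence admissible. The continuum computation is standard. Since $|\nabla\phi_R|^2 = \alpha^2 r^{-N}\eta_R^2-\alpha r^{1-N}(\eta_R^2)'+r^{2-N}(\eta_R')^2$ with $r=|x|$, and $\int \phi_R^2|x|^{-2}=|S^{N-1}|\int r^{-1}\eta_R^2\,\dd r\sim|S^{N-1}|\log R$, the cross term integrates to $O(1)$ after integration by parts. The last term contributes $\int (\eta_R')^2 r\,\dd r=O(1)+O(1/\log R)$. Hence
\[
 \int|\nabla\phi_R|^2=A_N\int\frac{\phi_R^2}{|x|^2}+O(1),\qquad \int\frac{\phi_R^2}{|x|^2}\to\infty .
\]

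Next I would compare $\cD_N[u_R]$ with the integral by applying Lemma~\ref{lem:cube-comparison} to $H_R=\phi_R^2|x|^{-2}$. This function is $C^1$, vanishes near $0$, and is uniformly bounded with bounded gradient on a fixed ball. Outside it, $|\nabla H_R|\lesssim r^{-N-1}$, because $|\eta_R'|\lesssim 1/r$ uniformly in $R$ (since $1/\log R\le1$). So $\cD_N[u_R]=\int H_R+O(1)\to\infty$.

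For the energy, write $\cE_N[u_R]=\sum_j\sum_n|\phi_R(n+e_j)-\phi_R(n)|^2$ and apply the lemma to $G_R^{(j)}(x):=|\phi_R(x+e_j)-\phi_R(x)|^2$, summing over $j$. Away from a fixed ball, $|\nabla G_R^{(j)}|\lesssim |\nabla\phi_R|\,\sup|D^2\phi_R|\lesssim r^{-\alpha-1}r^{-\alpha-2}=r^{-N-1}$, uniformly in $R$, since all derivatives of $\eta_R(r)$ of order $k$ are $O(r^{-k})$. It then remains to compare $\int G_R^{(j)}$ with $\int|\partial_j\phi_R|^2$. I would use
\[
 \phi_R(x+e_j)-\phi_R(x)=\int_0^1\partial_j\phi_R(x+te_j)\,\dd t ,
\]
which gives $\int G_R^{(j)}\le\int|\partial_j\phi_R|^2$ by Jensen's inequality and translation invariance of Lebesgue measure. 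This is exactly the direction needed. Summing over $j$ yields $\cE_N[u_R]\le\int|\nabla\phi_R|^2+O(1)$.

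Combining the two comparisons,
\[
 \frac{\cE_N[u_R]}{\cD_N[u_R]}\le\frac{A_N\int\phi_R^2|x|^{-2}+O(1)}{\int\phi_R^2|x|^{-2}+O(1)}\longrightarrow A_N
\]
as $R\to\infty$, which proves $C(N)\le A_N$. I expect the main obstacle to be verifying the uniform gradient decay hypothesis of Lemma~\ref{lem:cube-comparison} for the difference squares $G_R^{(j)}$, especially in the cutoff regions near $r\approx R$ and $r\approx R^2$. There the $1/\log R$ factors only help, so the bounds should close. The Jensen step neatly avoids any need for a two-sided error estimate on the energy.
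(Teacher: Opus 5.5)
Your proof is correct. It uses the same overall strategy as the paper: the same kind of test function (a logarithmically truncated lattice sample of $|x|^{-(N-2)/2}$), the same application of Lemma~\ref{lem:cube-comparison} to the denominator, and the same continuum computation. The route differs only in the energy step.

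\textbf{The paper's energy step.} The paper Taylor-expands $f_R(n+e_j)-f_R(n)=\partial_jf_R(n)+R_{n,j}$ at each lattice point. It bounds the cross terms and the squared remainders by convergent sums of the type $\sum|n|^{-N-1}$. It then applies Lemma~\ref{lem:cube-comparison} to $|\nabla f_R|^2$. This yields the two-sided statement $\cE_N[f_R]=\int|\nabla f_R|^2+O(1)$, so the quotients actually converge to $A_N$.

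\textbf{Your energy step.} You apply Lemma~\ref{lem:cube-comparison} directly to the difference squares $|\phi_R(x+e_j)-\phi_R(x)|^2$. Their gradients are bounded by the product of suprema of $|\nabla\phi_R|$ and $|D^2\phi_R|$ over the segment $[x,x+e_j]$, hence are $O(|x|^{-N-1})$ uniformly in $R$. You then compare with $\int|\partial_j\phi_R|^2$ exactly in the continuum, using the fundamental theorem of calculus, Jensen's inequality and translation invariance.

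\textbf{What each route buys.} Yours gives only the one-sided bound $\cE_N[u_R]\le\int|\nabla\phi_R|^2+O(1)$, hence $\limsup\le A_N$ rather than a limit. That is all the proposition needs, and it removes the pointwise Taylor bookkeeping at lattice points. The paper's route gives the stronger two-sided asymptotic at the cost of that bookkeeping.

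A minor point: in the continuum computation the cross term vanishes exactly, not merely up to $O(1)$, since $\int_0^\infty(\eta_R^2)'(r)\,\dd r=0$.
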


\begin{proof}
Let
\[
 \alpha:=\frac{N-2}{2}.
\]
Choose smooth functions $\xi:[0,\infty)\to[0,1]$ and
$\eta:\R\to[0,1]$ such that
\[
 \xi(r)=0\quad(0\le r\le1),\qquad \xi(r)=1\quad(r\ge2),
\]
and
\[
 \eta(t)=1\quad(t\le1),\qquad \eta(t)=0\quad(t\ge2).
\]
For $R>4$ and $r>0$ set
\[
 w_R(r):=\xi(r)\eta\!\left(\frac{\log r}{\log R}\right),
 \qquad
 f_R(x):=w_R(|x|)|x|^{-\alpha}\quad(x\ne0),
\]
set $w_R(0)=0$, and put $f_R(0)=0$ after restricting to $\Z^N$.  Then $f_R$ has finite
support and is admissible in \eqref{eq:def-CN}.

\emph{Denominator.}
Since $2\alpha+2=N$,
\[
 \frac{|f_R(x)|^2}{|x|^2}=w_R(|x|)^2|x|^{-N}.
\]
The gradient of this function is $O(|x|^{-N-1})$ outside the fixed inner
annulus, uniformly in $R$.  Applying Lemma~\ref{lem:cube-comparison} and then
polar coordinates gives
\begin{align}
 \cD_N[f_R]
 &=|\mathbb S^{N-1}|\int_0^\infty \frac{w_R(r)^2}{r}\,\dd r+O(1)\notag\\
 &=|\mathbb S^{N-1}|\log R\int_0^2\eta(t)^2\,\dd t+O(1).
 \label{eq:D-asymptotic}
\end{align}
For the second equality we use $t=(\log r)/(\log R)$ on $r\ge2$; the fixed
transition region $1<r<2$ contributes $O(1)$.

\emph{Energy.}
Taylor's formula on a unit segment outside the fixed inner annulus gives
\[
 f_R(n+e_j)-f_R(n)=\partial_j f_R(n)+R_{n,j},
 \qquad
 |R_{n,j}|\le C\sup_{[n,n+e_j]}|D^2f_R|.
\]
Uniformly in $R$,
\[
 |\nabla f_R(x)|\le C|x|^{-\alpha-1},
 \qquad
 |D^2f_R(x)|\le C|x|^{-\alpha-2}
\]
outside that annulus.  Hence the total contribution of the cross terms is
bounded by a constant multiple of
\[
 \int_2^\infty r^{N-1}r^{-\alpha-1}r^{-\alpha-2}\,\dd r
 =\int_2^\infty r^{-2}\,\dd r,
\]
while the squared remainders are bounded by a constant multiple of
$\int_2^\infty r^{-3}\,\dd r$.  Therefore
\begin{equation}\label{eq:E-gradient-sum}
 \cE_N[f_R]=\sum_{n\in\Z^N}|\nabla f_R(n)|^2+O(1).
\end{equation}
Furthermore,
$|\nabla(|\nabla f_R|^2)|\le C|x|^{-N-1}$ outside a fixed ball, so
Lemma~\ref{lem:cube-comparison} applied to $H_R=|\nabla f_R|^2$ yields
\begin{equation}\label{eq:E-gradient-integral}
 \cE_N[f_R]=\int_{\R^N}|\nabla f_R(x)|^2\,\dd x+O(1).
\end{equation}

It remains to evaluate this continuum integral.  Since $f_R$ is radial,
\[
 \partial_r\bigl(w_R(r)r^{-\alpha}\bigr)
 =w_R'(r)r^{-\alpha}-\alpha w_R(r)r^{-\alpha-1}.
\]
Using $2\alpha=N-2$, polar coordinates give
\begin{align*}
 \int_{\R^N}|\nabla f_R|^2\,\dd x
 =|\mathbb S^{N-1}|\int_0^\infty
 \left(\alpha^2\frac{w_R(r)^2}{r}
       -2\alpha w_R(r)w_R'(r)+r|w_R'(r)|^2\right)\,\dd r.
\end{align*}
The middle term vanishes exactly because
\[
 \int_0^\infty w_R(r)w_R'(r)\,\dd r
 =\frac12[w_R(r)^2]_{0}^{\infty}=0.
\]
The last integral is $O(1)$: the derivative of $\xi$ is supported in the
fixed annulus $1<r<2$, while on the logarithmic cutoff region
$|w_R'(r)|\le C/(r\log R)$ and hence its contribution is
$O((\log R)^{-1})$.  Combining this with
\eqref{eq:E-gradient-integral} and the same radial integral already appearing
in \eqref{eq:D-asymptotic}, we obtain
\[
 \cE_N[f_R]
 =\alpha^2|\mathbb S^{N-1}|\log R
   \int_0^2\eta(t)^2\,\dd t+O(1).
\]
\emph{Conclusion.}
The numerator and denominator therefore have the same positive leading factor,
and
\[
 \lim_{R\to\infty}\frac{\cE_N[f_R]}{\cD_N[f_R]}
 =\alpha^2=\frac{(N-2)^2}{4}=A_N.
\]
The variational definition \eqref{eq:def-CN} gives $C(N)\le A_N$.
\end{proof}

\subsection{Ground-state representation}

We use the unoriented-edge realization of the Dirichlet energy in
\eqref{eq:Dirichlet-energy}.  The next identity is the discrete $p=2$
ground-state transform adapted to this realization.  It converts a positive
pointwise supersolution into a global Hardy inequality and simultaneously
retains the nonnegative remainder term.  General continuum and graph versions,
together with their role in criticality theory, may be found in
\cite{FrankSeiringer2008,KellerPinchoverPogorzelskiCMP2018,Fischer2024}.

\begin{lemma}\label{lem:groundstate}
Let $h>0$ on $\Z^N\setminus\{0\}$ and set $h(0)=0$.  For every finitely
supported $u$ with $u(0)=0$,
\begin{align}
 \cE_N[u]
 &-\sum_{n\ne0}\frac{(-\lap h)(n)}{h(n)}|u(n)|^2\notag\\
 &=\sum_{\substack{\{n,m\}\in E_N\\n,m\ne0}}
 h(n)h(m)
 \left|\frac{u(n)}{h(n)}-\frac{u(m)}{h(m)}\right|^2.\label{eq:gsr}
\end{align}
In particular, if
\begin{equation}\label{eq:supersolution}
 |n|^2\frac{(-\lap h)(n)}{h(n)}\ge\lambda
 \qquad(n\ne0),
\end{equation}
then $C(N)\ge\lambda$.
\end{lemma}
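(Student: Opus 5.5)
The plan is to prove \eqref{eq:gsr} edge by edge, by expanding both sides over the unoriented edge set $E_N$ and keeping track of the edges incident with the origin. Since $u$ is finitely supported and $u(0)=0$, every sum involved is finite. Write $u=hv$ on $X_N$, with $v:=u/h$ there.

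\emph{Edges away from the origin.} For an edge $\{n,m\}$ with $n,m\ne0$ we have the elementary identity
\[
 |u(n)-u(m)|^2-h(n)h(m)|v(n)-v(m)|^2
 =|u(n)|^2\Bigl(1-\frac{h(m)}{h(n)}\Bigr)+|u(m)|^2\Bigl(1-\frac{h(n)}{h(m)}\Bigr).
\]
It follows by expanding $|hv(n)-hv(m)|^2$: the cross terms $-2h(n)h(m)\,\mathrm{Re}\,v(n)\overline{v(m)}$ cancel, which leaves $(h(n)^2-h(n)h(m))|v(n)|^2$ plus the symmetric term in $m$.

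\emph{Edges through the origin.} For an edge $\{0,n\}$ we have $|u(n)-u(0)|^2=|u(n)|^2$. We can write this as $|u(n)|^2(1-h(0)/h(n))$, because $h(0)=0$.

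\emph{Summation.} Summing over all edges and regrouping by vertex $n\ne0$, the coefficient of $|u(n)|^2$ is
\[
 \sum_{m\sim n}\Bigl(1-\frac{h(m)}{h(n)}\Bigr)=\frac{-(\lap h)(n)}{h(n)},
\]
and this sum includes the neighbour $m=0$ when $|n|=1$, where $h(0)=0$. The origin contributes no $|u(0)|^2$ term because $u(0)=0$. This yields \eqref{eq:gsr}. The only care needed is exactly this bookkeeping at the origin: the convention $h(0)=0$ makes the edges to the origin fit the same formula, and this is also why the remainder sum runs only over edges with $n,m\ne0$.

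\emph{The consequence $C(N)\ge\lambda$.} The right-hand side of \eqref{eq:gsr} is nonnegative because $h>0$ on $X_N$. Assumption \eqref{eq:supersolution} says $(-\lap h)(n)/h(n)\ge\lambda|n|^{-2}$ for every $n\ne0$. Since $|u(n)|^2\ge0$, this gives $\cE_N[u]\ge\lambda\cD_N[u]$ for every admissible $u$. Taking the infimum in \eqref{eq:def-CN} then gives $C(N)\ge\lambda$.
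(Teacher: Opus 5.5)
Your proposal is correct and follows essentially the paper's proof. Both use the same edgewise expansion, which you write in terms of $u$ rather than $v=u/h$, followed by regrouping by vertex, with the convention $h(0)=0$ producing the coefficient $(-\lap h)(n)/h(n)$. The paper adds the origin edges as a separate boundary term $\ind_{\{|n|=1\}}h(n)$ before recombining, whereas you fold them into the same formula directly; this is only a bookkeeping difference.
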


\begin{proof}
Write $u(n)=h(n)v(n)$ for $n\ne0$.  On an edge with nonzero endpoints,
direct expansion gives
\begin{align*}
 |h(n)v(n)-h(m)v(m)|^2
 &-h(n)h(m)|v(n)-v(m)|^2\\
 &=(h(n)-h(m))
   \bigl(h(n)|v(n)|^2-h(m)|v(m)|^2\bigr).
\end{align*}
After summing over all such edges, the coefficient of $|v(n)|^2$ is
\[
 h(n)\sum_{\substack{m\sim n\\m\ne0}}\bigl(h(n)-h(m)\bigr).
\]
An edge incident with the origin contributes
$|u(n)|^2=h(n)^2|v(n)|^2$ to the energy and no term to the transformed edge
sum.  Adding these boundary contributions gives the coefficient
\[
 h(n)\left[\sum_{\substack{m\sim n\\m\ne0}}
 (h(n)-h(m))+\ind_{\{|n|=1\}}h(n)\right]
 =h(n)(-\lap h)(n),
\]
where the last equality uses $h(0)=0$.  Since
$h(n)(-\lap h)(n)|v(n)|^2=(-\lap h)(n)|u(n)|^2/h(n)$,
this proves \eqref{eq:gsr}.  All sums involving $u$ or $v$ are finite,
including edges leaving their support, so no convergence argument is needed.
If \eqref{eq:supersolution} holds, the potential term in \eqref{eq:gsr} is at
least $\lambda\cD_N[u]$ and the remainder is nonnegative.
\end{proof}

\subsection{A convex reduction of the angular variables}

For $\alpha>0$ and $m\in\mathbb N_0$, set
\[
 k^\alpha(m):=\frac{\Gamma(\alpha+m)}
 {\Gamma(\alpha)\Gamma(m+1)}.
\]
These are the generalized Ces\`aro kernels.  Their recurrence, generating
function, convolution properties, monotonicity, and use in discrete
fractional calculus are developed in
\cite{AbadiasMiana2018,GoodrichLizama2020}.  In particular, the binomial
formula gives
\begin{equation}\label{eq:cesaro}
 \sum_{m=0}^\infty k^\alpha(m)z^m=(1-z)^{-\alpha}\quad(|z|<1),
 \qquad
 k^\alpha(m+1)=\frac{m+\alpha}{m+1}k^\alpha(m).
\end{equation}
In particular, $k^\alpha(m)$ is strictly decreasing in $m$ when
$0<\alpha<1$.  The even part of the series is
\begin{equation}\label{eq:even-kernel}
 \mathcal K_\alpha(z)
 :=\sum_{m=0}^\infty k^\alpha(2m)z^m
 =\frac12\left[(1-\sqrt z)^{-\alpha}
                +(1+\sqrt z)^{-\alpha}\right],\qquad 0\le z<1.
\end{equation}

The nearest-neighbour operator has an exact radial form that will be used
throughout the paper.

\begin{lemma}[Radial nearest-neighbour identity]\label{lem:radial-neighbour}
Let $H:\mathbb N_0\to\mathbb R$, let $n\in\Z^N$, and put $s=|n|^2$.  Define
\begin{equation}\label{eq:coordinate-multiplicities}
 \mu_r(n):=\#\{j\in\{1,\ldots,N\}: |n_j|=r\},\qquad r\in\mathbb N_0.
\end{equation}
Then
\begin{equation}\label{eq:radial-neighbour}
 \sum_{m\sim n}H(|m|^2)
 =2\mu_0(n)H(s+1)
 +\sum_{r\ge1}\mu_r(n)
   \bigl[H(s+1-2r)+H(s+1+2r)\bigr].
\end{equation}
\end{lemma}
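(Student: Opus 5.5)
The identity is a direct bookkeeping computation, organized coordinate by coordinate. The neighbours of $n$ are exactly the $2N$ points $n\pm e_j$, $j=1,\dots,N$. Each such point differs from $n$ only in the $j$-th coordinate, so
\[
 |n\pm e_j|^2=s-n_j^2+(n_j\pm1)^2=s+1\pm2n_j .
\]
Thus the left-hand side of \eqref{eq:radial-neighbour} splits as
\[
 \sum_{j=1}^N\bigl[H(s+1+2n_j)+H(s+1-2n_j)\bigr],
\]
and each term depends on $j$ only through the value $n_j$.

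\emph{Step 1: symmetry of each coordinate term.} The bracket $H(s+1+2n_j)+H(s+1-2n_j)$ is symmetric under $n_j\mapsto-n_j$. It therefore depends only on $r=|n_j|$, and equals $H(s+1-2r)+H(s+1+2r)$.

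\emph{Step 2: regrouping by multiplicities.} Group the indices $j$ according to $r=|n_j|\in\mathbb N_0$. By definition \eqref{eq:coordinate-multiplicities}, exactly $\mu_r(n)$ indices have $|n_j|=r$. For $r=0$ the bracket reduces to $2H(s+1)$, which gives the term $2\mu_0(n)H(s+1)$. For $r\ge1$ each index contributes $H(s+1-2r)+H(s+1+2r)$, which gives the stated sum. Since $\sum_r\mu_r(n)=N$ and only finitely many $\mu_r(n)$ are nonzero, the sum over $r$ is finite.

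\emph{Step 3: well-definedness.} All arguments of $H$ are squared norms of lattice points, so they lie in $\mathbb N_0$. In particular, $s+1-2r=|n|^2-r^2+(r-1)^2\ge0$ whenever $\mu_r(n)\ge1$. This is the only point needing care: checking that the argument $s+1-2r$ never becomes negative, so that $H$ is applied within its domain. There is no genuine obstacle here; the case $n=0$ (all $\mu_0=N$) is included automatically, since then the formula gives $2NH(1)$, as it should.
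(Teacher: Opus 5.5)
Your proposal is correct and follows essentially the same route as the paper: compute $|n\pm e_j|^2=s+1\pm2n_j$, note that the pair of contributions depends only on $|n_j|$, and group coordinates by $\mu_r(n)$. Your extra check that $s+1-2r=|n|^2-r^2+(r-1)^2\ge0$ whenever $\mu_r(n)\ge1$, so that $H$ is evaluated only on $\mathbb N_0$, is a harmless addition that the paper leaves implicit.
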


\begin{proof}
For a fixed coordinate $j$,
\[
 |n\pm e_j|^2=|n|^2+1\pm2n_j=s+1\pm2n_j.
\]
If $n_j=0$, the two neighbours therefore contribute $2H(s+1)$.  If
$|n_j|=r\ge1$, their contribution is
$H(s+1-2r)+H(s+1+2r)$, independently of the sign of $n_j$.  Grouping the
coordinates according to their absolute values gives
\eqref{eq:radial-neighbour}.
\end{proof}

The multiplicities \eqref{eq:coordinate-multiplicities} are invariants of the
signed-permutation action on $\Z^N$.  They encode the radial action of the
adjacency operator and will also enter the orbit reduction in
Section~\ref{sec:high-dimensional-orbits}.

The neighbour sum of a radial profile still depends on the distribution of
the coordinate multiplicities.  The next lemma removes this angular
dependence by showing that, at fixed $|n|^2$, concentrating all squared
coordinate mass in one coordinate gives the largest possible neighbour sum.

The positivity of the coefficients in \eqref{eq:even-kernel} is the essential
input.  It converts the angular problem into a one-variable convexity argument
and will allow the supersolution inequalities to be checked only in the axial
model.  The resulting principle is useful beyond the particular shifted
powers used below: any positive linear combination of radial powers of the
form appearing in Lemma~\ref{lem:angular} is maximized, at fixed $|n|^2$, by
concentrating the squared coordinate mass on one axis.  We retain the
Ces\`aro-kernel formulation because it makes the positivity mechanism
explicit and stable under such positive combinations.

\begin{lemma}\label{lem:angular}
Let
\[
 F(q)=\sum_{r=1}^M c_r(q+\beta)^{-p_r},
 \qquad c_r,p_r,\beta>0.
\]
For $n\in\Z^N$ and $s=|n|^2$,
\begin{align}
 &\sum_{j=1}^N\bigl(F(s+1+2n_j)+F(s+1-2n_j)\bigr)\notag\\
 &\quad\le F(s+1+2\sqrt s)+F(s+1-2\sqrt s)
              +2(N-1)F(s+1).\label{eq:axial}
\end{align}
\end{lemma}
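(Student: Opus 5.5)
By linearity in $F$ with positive coefficients $c_r$, it suffices to treat a single power $F(q)=(q+\beta)^{-p}$ with $p,\beta>0$. Positive combinations of the resulting inequalities then give \eqref{eq:axial}. Set $a:=s+1+\beta>0$ and, for each coordinate, $t_j:=n_j^2\ge0$, so that $\sum_j t_j=s$. The $j$-th summand on the left is
\[
 (a+2n_j)^{-p}+(a-2n_j)^{-p}
 =a^{-p}\Bigl[\bigl(1-2\sqrt{t_j}/a\bigr)^{-p}+\bigl(1+2\sqrt{t_j}/a\bigr)^{-p}\Bigr]
 =2a^{-p}\,\mathcal K_p\!\left(\frac{4t_j}{a^2}\right),
\]
by the even-kernel formula \eqref{eq:even-kernel}. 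The arguments lie in $[0,1)$: we have $4t_j\le 4s$, and $a^2-4s\ge(s+1)^2-4s=(s-1)^2\ge0$, with strict inequality because $\beta>0$. The right-hand side of \eqref{eq:axial} corresponds to the configuration $t_1=s$, $t_2=\dots=t_N=0$, since $\mathcal K_p(0)=1$ reproduces $2F(s+1)$ for each vanishing coordinate.

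The key step is to define $\phi(z):=\mathcal K_p(z)=\sum_{m\ge0}k^p(2m)z^m$. This is a power series with nonnegative coefficients on $[0,1)$, because $k^p(m)>0$ for $p>0$. Hence $\phi$ is convex on $[0,1)$, and moreover $\phi(z)-\phi(0)$ is superadditive: for $x,y\ge0$ with $x+y<1$,
\[
 \phi(x+y)-\phi(0)\ \ge\ \bigl(\phi(x)-\phi(0)\bigr)+\bigl(\phi(y)-\phi(0)\bigr).
\]
This holds termwise, since $(x+y)^m\ge x^m+y^m$ for $m\ge1$ and the term $m=0$ cancels. Iterating the inequality over the $N$ coordinates with $z_j=4t_j/a^2$ and $\sum_j z_j=4s/a^2<1$ gives
\[
 \sum_{j=1}^N\phi(z_j)\ \le\ \phi\Bigl(\sum_j z_j\Bigr)+(N-1)\phi(0).
\]
Multiplying by $2a^{-p}$ turns this into exactly \eqref{eq:axial}. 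Here we use that $2a^{-p}\phi(4s/a^2)=F(s+1+2\sqrt s)+F(s+1-2\sqrt s)$, again by \eqref{eq:even-kernel} with $\sqrt z=2\sqrt s/a$.

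The only delicate points are bookkeeping. First, the identification of each coordinate pair with the even kernel must be handled correctly, since the sign of $n_j$ is irrelevant. Second, the domain condition $4s/a^2<1$ must hold. It does, by the estimate $a>s+1\ge 2\sqrt s$, which uses $\beta>0$ essentially: for $\beta=0$ and $s=1$ the axial term $F(0)$ would be singular. I do not expect a genuine obstacle. The positivity of the Ces\`aro coefficients does all the work, exactly as the remark before the lemma anticipates.
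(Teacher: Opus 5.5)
Your proof is correct and follows the paper's route: you reduce to a single power, write each coordinate pair as $2a^{-p}\mathcal K_p(4n_j^2/a^2)$ with $a=s+1+\beta>2\sqrt s$, and iterate the two-point inequality $\Phi(x)+\Phi(y)\le\Phi(x+y)+\Phi(0)$. The only difference is that you derive that two-point inequality termwise from $(x+y)^m\ge x^m+y^m$ rather than from convexity. This is slightly cleaner, since it avoids the paper's remark about approximation at the endpoints.
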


\begin{proof}
Linearity reduces the proof to $F(q)=(q+\beta)^{-p}$.  Put
$a=s+1+\beta$.  Since $a>2\sqrt{s}$, formula \eqref{eq:even-kernel} gives,
for $0\le t\le s$,
\begin{align*}
 \Phi(t)&:=F(s+1+2\sqrt t)+F(s+1-2\sqrt t)\\
 &=2a^{-p}\sum_{m=0}^\infty k^p(2m)
       \left(\frac{4t}{a^2}\right)^m.
\end{align*}
All coefficients are nonnegative, so $\Phi$ is convex on $[0,s]$.  For
$x,y\ge0$ with $x+y\le s$, convexity implies
\[
 \Phi(x)+\Phi(y)\le\Phi(x+y)+\Phi(0).
\]
For example, this follows by integrating the increasing derivative of
$\Phi$; the general case follows by approximation if an endpoint is involved.
Applying this inequality repeatedly to $t_j=n_j^2$, for which
$\sum_jt_j=s$, yields
\[
 \sum_{j=1}^N\Phi(t_j)\le\Phi(s)+(N-1)\Phi(0),
\]
which is \eqref{eq:axial}.
\end{proof}

\section{Dimensions three and four}

The continuum formal ground state is $|x|^{-(N-2)/2}$.  In the squared radial
variable this is $s^{-(N-2)/4}$, so the simplest lattice perturbation that
preserves the continuum decay while allowing the first lattice correction to
adjust is a shift $s\mapsto s+\beta$.  This leads to the family below.  The
point of Proposition~\ref{prop:critical-shift} is that the shift is not a
free fitting parameter: its endpoint value is forced by the asymptotic
nearest-neighbour expansion.

Put
\begin{equation}\label{eq:pN-hbeta}
 p_N:=\frac{N-2}{4},
 \qquad
 h_{N,\beta}(n):=(|n|^2+\beta)^{-p_N}\quad(n\ne0),
\end{equation}
and set $h_{N,\beta}(0)=0$.  If $s=|n|^2\ge2$, no neighbour of $n$ is the
origin.  Lemma~\ref{lem:angular} shows that the sum of the neighbouring values,
divided by $h_{N,\beta}(n)$, is bounded above by
\begin{align}
 \mathcal R_{N,\beta}(s):={}&
 \left(\frac{s+\beta}{s+1+\beta+2\sqrt s}\right)^{p_N}
 +\left(\frac{s+\beta}{s+1+\beta-2\sqrt s}\right)^{p_N}\notag\\
 &+2(N-1)\left(\frac{s+\beta}{s+1+\beta}\right)^{p_N}.
 \label{eq:axial-ratio}
\end{align}
Therefore the supersolution inequality at coefficient $A_N$ follows if
\begin{equation}\label{eq:critical-axial}
 \mathcal R_{N,\beta}(s)\le2N-\frac{A_N}{s}.
\end{equation}

\subsection{The necessary shift at infinity}

Before checking the supersolution inequality at individual shells, one must
determine which shifts are even compatible with the continuum asymptotic
coefficient.  Expanding the axial neighbour quotient supplies this necessary
condition and identifies a distinguished value $\beta_N^*$.

The conclusion is only a condition at infinity: satisfying it does not ensure
that the finitely many inner shells obey the supersolution inequality.  This
distinction is precisely what separates dimensions three and four from the
higher-dimensional cases.

For the unshifted power, Huang--Ye
\cite[Theorem~6.1 and Remark~6.2]{HuangYe2024} compute the angular correction
to the lattice Laplacian.  On an axis their expansion reads
\[
 \frac{-\lap h_{N,0}}{h_{N,0}}
 =\frac{A_N}{s}
 -\frac{(N^2-4)(N^2+16N-12)}{192s^2}+O(s^{-3}).
\]
The $s^{-2}=|n|^{-4}$ correction for the unshifted radial power is therefore
already contained in \cite{HuangYe2024}.  Proposition~\ref{prop:critical-shift}
determines how the shift parameter $\beta$ modifies this coefficient: its
cancellation singles out the threshold $\beta_N^*$.  The subsequent lemmas
then supply the global information needed on the finite shells.

\begin{proposition}\label{prop:critical-shift}
As $s\to\infty$,
\begin{equation}\label{eq:affine-expansion}
 \mathcal R_{N,\beta}(s)
 =2N-\frac{(N-2)^2}{4s}
 +\frac{(N-2)(\beta_N^*-\beta)}{s^2}+O(s^{-3}),
\end{equation}
where
\begin{equation}\label{eq:beta-star}
 \beta_N^*=\frac{N^3+18N^2+20N-24}{192}.
\end{equation}
If $\beta<\beta_N^*$, then $h_{N,\beta}$ cannot be a global supersolution for
the potential $A_N|n|^{-2}$.
\end{proposition}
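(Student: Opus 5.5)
The plan is a direct asymptotic expansion of \eqref{eq:axial-ratio} in powers of $s^{-1}$, using the even Ces\`aro series \eqref{eq:even-kernel} to handle the two off-centre axial terms, followed by an evaluation on the coordinate axis. Write $p=p_N$ and $a=s+1+\beta$. Exactly as in the proof of Lemma~\ref{lem:angular}, the first two terms of $\mathcal R_{N,\beta}(s)$ combine to
\[
 2\Bigl(\frac{s+\beta}{a}\Bigr)^{p}\sum_{m\ge0}k^{p}(2m)\Bigl(\frac{4s}{a^2}\Bigr)^m .
\]
Since $4s/a^2=O(s^{-1})$, only $m=0,1,2$ contribute up to $O(s^{-3})$. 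The last term is $2(N-1)\bigl((s+\beta)/a\bigr)^p$. It is therefore enough to expand three quantities to second order:
\[
 \Bigl(\frac{s+\beta}{a}\Bigr)^p=\Bigl(1+\frac{1}{s+\beta}\Bigr)^{-p},
 \qquad
 \frac{4s}{a^2}=\frac4s\Bigl(1-\frac{2(1+\beta)}{s}+O(s^{-2})\Bigr),
 \qquad
 \Bigl(\frac{4s}{a^2}\Bigr)^2=\frac{16}{s^2}+O(s^{-3}).
\]
The coefficients needed are $k^p(2)=p(p+1)/2$ and $k^p(4)=p(p+1)(p+2)(p+3)/24$.

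The order-$s^{-1}$ coefficient serves as a consistency check. It equals $-2Np+8k^p(2)=-2Np+4p(p+1)$. With $4p=N-2$ this is $(N-2)\bigl(-2N+N+2\bigr)/4=-(N-2)^2/4$, which is $-A_N$ as claimed.

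At order $s^{-2}$, I would first isolate the $\beta$-dependence, since it is linear and simple:
\begin{itemize}
\item The prefactor $\bigl(1+1/(s+\beta)\bigr)^{-p}$ contributes $+p\beta/s^2$, multiplied by the total weight $2N$.
\item The factor $4s/a^2$ contributes $-8\beta/s^2$, multiplied by $2k^p(2)$.
\end{itemize}
The total is $2Np\beta-8p(p+1)\beta=2p\beta(N-4p-4)=-4p\beta=-(N-2)\beta$, which matches \eqref{eq:affine-expansion}. The $\beta$-independent part collects the remaining terms: $p(p+1)/2$ from the prefactor, the cross term $-p\cdot 8k^p(2)$, the $-8$ correction inside $4s/a^2$, and $32k^p(4)$. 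Substituting $p=(N-2)/4$ should produce $(N-2)\beta_N^*$ with $\beta_N^*$ as in \eqref{eq:beta-star}. As a cross-check, at $\beta=0$ the coefficient must reduce to the Huang--Ye value $-(N^2-4)(N^2+16N-12)/192$ quoted above. Indeed $(N-2)(N^3+18N^2+20N-24)-(N^2-4)(N^2+16N-12)$ should agree with $192(N-2)\beta_N^*$ once the algebra is done, and I would verify this factorization at that point.

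The necessity statement follows by evaluating on the axis. For $n=(k,0,\dots,0)$ we have $s=k^2$, and the inequality in Lemma~\ref{lem:angular} is an equality because all the squared coordinate mass already sits on one axis. For $k\ge2$ no neighbour of $n$ is the origin. Hence on these points
\[
 \frac{(-\lap h_{N,\beta})(n)}{h_{N,\beta}(n)}=2N-\mathcal R_{N,\beta}(s).
\]
If $\beta<\beta_N^*$, then \eqref{eq:affine-expansion} gives
\[
 |n|^2\,\frac{-\lap h}{h}=A_N-\frac{(N-2)(\beta_N^*-\beta)}{s}+O(s^{-2})<A_N
\]
for all large $k$. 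Thus \eqref{eq:supersolution} fails at $\lambda=A_N$.

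The main obstacle I expect is the $\beta$-independent $s^{-2}$ bookkeeping, which is error-prone. I would organize it as a polynomial in $p$ and only then substitute $p=(N-2)/4$, with the Huang--Ye coefficient as the safeguard.
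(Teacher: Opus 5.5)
Your proposal is correct, and your necessity argument on the coordinate axis is the same as the paper's. The expansion itself is organized differently, and somewhat more economically.

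\textbf{How the two expansions differ.} The paper substitutes $r=s^{-1/2}$ and $b=\beta+1$. It Taylor-expands each of $(1\pm2r+br^2)^{-p}$ and $(1+br^2)^{-p}$ to fourth order, invokes evenness in $r$ to discard the odd orders, and then multiplies by $(1+\beta r^2)^p$. In that route $\beta$ enters the $r^4$ coefficient quadratically: through $b^2$ in $B_4$, through $p\beta B_2$, and through $Np(p-1)\beta^2$. The $\beta^2$ terms cancel only after these are combined.

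You instead factor out $((s+\beta)/a)^p=(1+1/(s+\beta))^{-p}$. You then write the two off-axis terms as $2\mathcal K_p(4s/a^2)$ via \eqref{eq:even-kernel}, exactly as in Lemma~\ref{lem:angular}.

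\textbf{What each route buys.} In your version the odd orders vanish identically, and only $k^p(2)$ and $k^p(4)$ are needed. The $m\ge3$ tail is $O(s^{-3})$ since $4s/a^2\le4/s$. It is also visible at once that $\beta$ enters the $s^{-2}$ coefficient linearly, with the clean value $2p\beta(N-4p-4)=-(N-2)\beta$. The paper's route, in exchange, records the general coefficients $B_2,B_4$ for arbitrary $\beta$ before specializing.

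\textbf{The deferred algebra.} You leave the $\beta$-free coefficient at ``should produce'', but it does work out. Your four contributions sum to
\[
 p(p+1)\Bigl[N-4p-8+\tfrac43(p+2)(p+3)\Bigr].
\]
With $N-4p=2$ this becomes $\frac{N^2-4}{16}\cdot\frac{N^2+16N-12}{12}$. This equals $(N-2)\beta_N^*$, because $(N+2)(N^2+16N-12)=N^3+18N^2+20N-24$.

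\textbf{Your cross-check as written would mislead you.} The quantity
\[
 (N-2)(N^3+18N^2+20N-24)-(N^2-4)(N^2+16N-12)
\]
is identically zero, not $192(N-2)\beta_N^*$. Also, on the axis $\mathcal R_{N,0}=2N-(-\lap h_{N,0})/h_{N,0}$. So the Huang--Ye coefficient appears in $\mathcal R$ with a plus sign, namely $+(N^2-4)(N^2+16N-12)/192$, not with the minus sign you wrote. Neither slip affects the argument itself.
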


\begin{proof}
Let $r=s^{-1/2}$, $p=p_N$, and $b=\beta+1$.  Formula
\eqref{eq:axial-ratio} becomes
\begin{align}
 \mathcal R_{N,\beta}(s)
 =(1+\beta r^2)^p\bigl[&(1+2r+br^2)^{-p}
 +(1-2r+br^2)^{-p}\notag\\
 &+2(N-1)(1+br^2)^{-p}\bigr].\label{eq:R-r-form}
\end{align}
Formula \eqref{eq:cesaro} may be used at this point in the equivalent form
\begin{equation}\label{eq:cesaro-taylor}
 (1+x)^{-p}=\sum_{m=0}^\infty(-1)^m k^p(m)x^m,
 \qquad |x|<1.
\end{equation}
Thus the Taylor coefficients below are precisely the generalized Ces\`aro
coefficients $k^p(m)$ with alternating signs.  Since only the terms through
order $r^4$ are required, displaying those four coefficients is shorter than
introducing coefficient convolutions for the expressions
$x=\pm2r+br^2$.  We therefore use
\[
 (1+x)^{-p}=1-px+\frac{p(p+1)}2x^2
 -\frac{p(p+1)(p+2)}6x^3
 +\frac{p(p+1)(p+2)(p+3)}{24}x^4+O(x^5),
\]
and adding the expressions with signs $+$ and $-$, the bracket in
\eqref{eq:R-r-form} equals
\[
 2N+B_2r^2+B_4r^4+O(r^6),
\]
where
\begin{align*}
 B_2&=-2Npb+4p(p+1),\\
 B_4&=Np(p+1)b^2-4p(p+1)(p+2)b\\
 &\hspace{25mm}+\frac43p(p+1)(p+2)(p+3).
\end{align*}
The bracket is an even analytic function of $r$ near the origin; hence the
first term not displayed is of order $r^6$.
On the other hand,
\[
 (1+\beta r^2)^p
 =1+p\beta r^2+\frac{p(p-1)}2\beta^2r^4+O(r^6).
\]
Thus the coefficients of $r^2$ and $r^4$ in
\eqref{eq:R-r-form} are, respectively,
\begin{align*}
 B_2+2Np\beta
 &=2p(2p+2-N),\\
 B_4+p\beta B_2+Np(p-1)\beta^2.
\end{align*}
Substitution of $p=(N-2)/4$ simplifies these expressions to
\[
 -\frac{(N-2)^2}{4},
 \qquad
 \frac{N-2}{192}
 \bigl(N^3+18N^2+20N-24-192\beta\bigr).
\]
Since $r^2=s^{-1}$, this is \eqref{eq:affine-expansion}.

If $\beta<\beta_N^*$, the $s^{-2}$ coefficient is positive.  At an axial lattice point $n=(m,0,\ldots,0)$ one has
$\mu_0=N-1$ and $\mu_m=1$, so \eqref{eq:radial-neighbour} shows that
\eqref{eq:axial-ratio} is the exact neighbour quotient.  Hence
\eqref{eq:critical-axial} fails for all sufficiently large integers $m$,
proving the final statement.
\end{proof}

The preceding proposition leaves open whether a shift
$\beta\ge\beta_N^*$ can satisfy all finite-shell inequalities.  The following
observation shows that the simple shifted power \eqref{eq:pN-hbeta} reaches its
limit in dimension four: from dimension five onward, an explicit inner shell
is incompatible with the condition forced by infinity.

This obstruction is pointwise and exact.  It also explains why the treatment
of dimensions five through eight requires profiles with additional radial or
angular structure.

\begin{proposition}\label{prop:shifted-power-fails}
Let $N\ge5$.  There is no $\beta>0$ for which the shifted radial profile
\[
 h_{N,\beta}(n)=(|n|^2+\beta)^{-(N-2)/4}\quad(n\ne0),
 \qquad h_{N,\beta}(0)=0,
\]
is a global supersolution at the continuum coefficient $A_N$, that is,
\[
 -\lap h_{N,\beta}(n)\ge \frac{A_N}{|n|^2}h_{N,\beta}(n)
 \qquad(n\ne0).
\]
\end{proposition}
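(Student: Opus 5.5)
The plan is to play the necessary condition at infinity against a single inner shell. By Proposition~\ref{prop:critical-shift}, any $\beta$ for which $h_{N,\beta}$ is a global supersolution at $A_N$ must satisfy $\beta\ge\beta_N^*$. It therefore suffices to exhibit one lattice site $n$ at which the pointwise inequality fails for every $\beta\ge\beta_N^*$.

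The shell $n=e_1$ is the natural first guess, because the origin is then a neighbour with $h(0)=0$. A numerical check at $N=5$, $\beta=\beta_5^*=651/192$, shows that $e_1$ still satisfies the inequality, so it gives no obstruction. I would instead use the diagonal point $n=e_1+e_2$, with $s=2$, $\mu_1=2$, $\mu_0=N-2$. Here Lemma~\ref{lem:radial-neighbour} gives the exact neighbour quotient, with no angular inequality needed. Dividing the supersolution inequality by $h(n)$, it reads
\begin{equation*}
 G_N(\beta):=2N-2\Bigl(\tfrac{2+\beta}{1+\beta}\Bigr)^{p_N}
 -2\Bigl(\tfrac{2+\beta}{5+\beta}\Bigr)^{p_N}
 -2(N-2)\Bigl(\tfrac{2+\beta}{3+\beta}\Bigr)^{p_N}\ \ge\ \frac{A_N}{2}.
\end{equation*}
At $N=5$, $\beta=\beta_5^*$, one gets $G_5\approx0.95<9/8$. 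The goal is to prove $G_N(\beta)<A_N/2$ for all $N\ge5$ and all $\beta\ge\beta_N^*$.

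\emph{Step 1: reduce to one value of $\beta$.} As $\beta$ grows, the second and third ratios increase toward $1$ while the first decreases toward $1$. So $G_N$ is not obviously monotone, and the first step is to handle this. I would write $x=1/(2+\beta)$, so each ratio becomes $(1\mp x)^{\mp p}$ or $(1+kx)^{-p}$. I would then expand $G_N$ as a power series in $x$ using \eqref{eq:cesaro-taylor}, exactly as in the proof of Proposition~\ref{prop:critical-shift}. The linear coefficient is $2p\bigl[-1+3-(N-2)\bigr]x=2p(4-N)x$, which is negative for $N\ge5$. Because $G_N(\beta)\to 2N-2-2-2(N-2)=0$ as $x\to0$, this gives $G_N(\beta)\approx 2p(4-N)x<0$ for large $\beta$.

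I would aim to show that $G_N$ is bounded above by this linear term plus an explicitly controlled second-order remainder, using alternating Taylor bounds for $(1+y)^{-p}$. It would then follow that $G_N(\beta)\le 0$, or at least that $G_N(\beta)<A_N/2$, whenever $x\le x_N^*:=1/(2+\beta_N^*)$.

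\emph{Step 2: the uniform-in-$N$ estimate.} Since $\beta_N^*\sim N^3/192$, the variable $x\le x_N^*$ is of order $N^{-3}$. The remainder is therefore of size $O(p^2x^2N)=O(N^{-3})$, which is tiny compared with $A_N/2\sim N^2/8$. So for $N$ large the conclusion holds with enormous room to spare.

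The main obstacle is making this quantitative down to $N=5,6,7,\dots$. The plan there is to use exact one-sided bounds: $(1-y)^{-p}\ge 1+py$ for the first ratio, and $(1+ky)^{-p}\ge 1-pky$ for the other two, all valid for $0<y<1$. These give $G_N(\beta)\le 2p(4-N)x\le 0<A_N/2$ directly for every $\beta>0$.

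If these elementary convexity bounds indeed close the argument, the dependence on $\beta_N^*$ becomes unnecessary. Otherwise I would fall back on checking $G_N(\beta_N^*)$ numerically for the finitely many small $N$ not covered by the asymptotic bound of Step~2, and then propagate the failure to all $\beta\ge\beta_N^*$ via Step~1.
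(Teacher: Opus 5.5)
Your choice of the shell $e_1+e_2$ and your numerics are correct: $e_1$ gives no obstruction at $N=5$, and $G_5(\beta_5^*)\approx0.95$. This matches the paper's treatment of $N=5$. However, Step~1 contains a sign error, and the closing argument rests on it. With $x=1/(2+\beta)$, the third ratio enters as $-2(N-2)(1+x)^{-p_N}=-2(N-2)+2(N-2)p_Nx+O(x^2)$. So the linear coefficient of $G_N$ is $2p_N\bigl(-1+3+(N-2)\bigr)=2Np_N>0$, not $2p_N(4-N)$. This sign is forced: $h_{N,\beta}$ is superharmonic, and $G_N(\beta)\sim N(N-2)/(2\beta)$ as $\beta\to\infty$. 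Your tangent-line bounds are valid, but what they actually give is
\[
 G_N(\beta)\le 2Np_Nx=\frac{N(N-2)}{2(\beta+2)}\qquad(\beta>0),
\]
not $G_N(\beta)\le0$. In particular, the shell $e_1+e_2$ does not exclude every $\beta>0$, and the input $\beta\ge\beta_N^*$ from Proposition~\ref{prop:critical-shift} cannot be dropped. For $N=6$ one has exactly $G_6(\beta)=\frac{6}{5+\beta}+\frac{8}{3+\beta}-\frac{2}{1+\beta}$, which equals $A_6/2=2$ at $\beta=1$ and exceeds it for $0<\beta<1$. For $N=5$ one finds $G_5(1/10)\approx1.24>9/8$.

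With the sign corrected, your route still settles $N\ge6$. The bound is below $A_N/2=(N-2)^2/8$ exactly when $\beta>2+8/(N-2)$, and $\beta_N^*\ge\beta_6^*=5>4\ge2+8/(N-2)$. This is a tidy alternative to the paper, which uses the shell $e_1$ and Bernoulli's inequality for $N\ge6$.

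For $N=5$, however, the linear bound requires $\beta>14/3$, while $\beta_5^*=217/64<14/3$; at $\beta_5^*$ it only gives about $1.39>9/8$. So the range $217/64\le\beta\le14/3$ is not covered. Your fallback does not fill this gap: a numerical value is not a certificate, and Step~1 never proved that $G_5$ is monotone in $\beta$.

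The paper closes exactly this case in two steps. It certifies $Q_5(\beta_5^*)=10-G_5(\beta_5^*)>179/20>71/8$ through rational comparisons checked after raising to the fourth power. It then shows that $Q_5$ is increasing for $\beta\ge\beta_5^*$: the derivative of the middle term dominates that of the first, because $3\bigl(\tfrac{\beta+1}{\beta+3}\bigr)^{7/4}\ge3(2/3)^{7/4}>1$ for $\beta\ge3$.

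Alternatively, you can keep your expansion and go to third order. For $t>-1$, $(1+t)^{-p}$ is bounded below by its cubic Taylor polynomial, since its fourth derivative is positive. This yields
\[
 G_5(\beta)\le\tfrac{15}{2}x-\tfrac{273}{16}x^2+\tfrac{2233}{64}x^3 .
\]
The right side has an everywhere positive derivative (its discriminant is negative), and its value at $x=64/345$ is about $1.027<9/8$.
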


\begin{proof}
Proposition~\ref{prop:critical-shift} shows first that any such supersolution
would necessarily satisfy $\beta\ge\beta_N^*$.

Consider first $N=5$, for which $p_5=3/4$ and
$\beta_5^*=217/64$.  At $n=(1,1,0,0,0)$ one has
$s=2$, $\mu_0=3$, and $\mu_1=2$.  Hence
\eqref{eq:radial-neighbour} gives the neighbour quotient in the form
\begin{equation}\label{eq:Q5-general}
 Q_5(\beta)=
 2\left(\frac{\beta+2}{\beta+1}\right)^{3/4}
 +6\left(\frac{\beta+2}{\beta+3}\right)^{3/4}
 +2\left(\frac{\beta+2}{\beta+5}\right)^{3/4}.
\end{equation}
At the smallest admissible shift,
\[
 Q_5(\beta_5^*)=
 2\left(\frac{345}{281}\right)^{3/4}
 +6\left(\frac{345}{409}\right)^{3/4}
 +2\left(\frac{345}{537}\right)^{3/4}.
\]
The elementary comparisons
\[
 \left(\frac{345}{281}\right)^{3/4}>\frac{23}{20},\qquad
 \left(\frac{345}{409}\right)^{3/4}>\frac78,\qquad
 \left(\frac{345}{537}\right)^{3/4}>\frac7{10}
\]
follow after raising each positive inequality to the fourth power.  Hence
\[
 Q_5(\beta_5^*)>\frac{179}{20}>\frac{71}{8}
 =2N-\frac{A_N}{|n|^2}.
\]
Moreover, $Q_5(\beta)$ is increasing for $\beta\ge\beta_5^*$.  Differentiating
\eqref{eq:Q5-general}, the only negative contribution comes from the first
term and is dominated by the derivative of the middle term because
\[
 3\left(\frac{\beta+1}{\beta+3}\right)^{7/4}>1
 \qquad(\beta\ge\beta_5^*>3).
\]
The last inequality follows from $3(2/3)^{7/4}>1$.  Thus the shell
$|n|^2=2$ rules out every $\beta\ge\beta_5^*$.

For $N\ge6$, take $n=e_1$.  With $H(0)=0$ and
$H(q)=(q+\beta)^{-p_N}$ for $q\ge1$, one has
$\mu_0=N-1$ and $\mu_1=1$, so \eqref{eq:radial-neighbour} gives
\[
 Q_N(\beta)=
 \left(\frac{\beta+1}{\beta+4}\right)^{p_N}
 +2(N-1)\left(\frac{\beta+1}{\beta+2}\right)^{p_N}.
\]
This function is increasing in $\beta$.  Moreover,
\begin{equation}\label{eq:beta-star-simple-bound}
 \beta_N^*-(N-1)
 =\frac{(N-6)(N^2+24N-28)}{192}\ge0
 \qquad(N\ge6).
\end{equation}
Since $p_N\ge1$, Bernoulli's inequality and
\eqref{eq:beta-star-simple-bound} give, for every $\beta\ge\beta_N^*$,
\begin{align*}
 Q_N(\beta)
 &\ge2N-1-\frac{3p_N}{\beta+4}
             -\frac{2(N-1)p_N}{\beta+2}\\
 &\ge2N-1-\frac{3(N-2)}{4(N+3)}
             -\frac{(N-1)(N-2)}{2(N+1)}.
\end{align*}
The correction subtracted from $2N$ is strictly smaller than
\[
 1+\frac34+\frac{N-2}{2}=\frac N2+\frac34,
\]
whereas
\[
 A_N-\left(\frac N2+\frac34\right)
 =\frac{N^2-6N+1}{4}>0\qquad(N\ge6).
\]
Thus $Q_N(\beta)>2N-A_N$, and the supersolution inequality already fails at
the first shell for every $N\ge6$.
\end{proof}

For the dimensions of interest, \eqref{eq:beta-star} gives
\begin{equation}\label{eq:beta34}
 \beta_3^*=\frac{75}{64},
 \qquad
 \beta_4^*=\frac{17}{8}.
\end{equation}

\subsection{The three-dimensional supersolution}

The necessary shift in dimension three is now fixed by
\eqref{eq:beta34}.  It remains to prove that this value controls every shell,
not merely the asymptotic tail.  The angular reduction turns that global task
into a scalar inequality in $s=|n|^2$.

The next lemma proves the required pointwise estimate.  Its proof keeps all
rounding directions explicit and treats the shell adjacent to the origin
separately.

\begin{lemma}\label{lem:N3}
Let
\[
 h_3(0)=0,
 \qquad
 h_3(n)=\left(|n|^2+\frac{75}{64}\right)^{-1/4}
 \quad(n\ne0).
\]
Then
\begin{equation}\label{eq:N3-super}
 -\lap h_3(n)>\frac{1}{4|n|^2}h_3(n)
 \qquad(n\ne0).
\end{equation}
\end{lemma}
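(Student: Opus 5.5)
We split the lattice into the inner shells $s=|n|^2\in\{1,2\}$, where the angular reduction either does not apply or is cheap to replace by exact neighbour counts, and the outer shells $s\ge3$, where it reduces everything to one scalar inequality. Throughout, $N=3$, $p_3=1/4$, $\beta=75/64$ and $A_3=1/4$. The claim \eqref{eq:N3-super} is equivalent to requiring that the neighbour quotient $\sum_{m\sim n}h_3(m)/h_3(n)$ be strictly below $6-1/(4s)$.

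\emph{Inner shells.} For $n=\pm e_j$ ($s=1$) one neighbour is the origin, where $h_3=0$. Lemma~\ref{lem:radial-neighbour}, with $H(0)=0$, $\mu_0=2$, $\mu_1=1$, gives the exact quotient
\[
\Bigl(\tfrac{139}{331}\Bigr)^{1/4}+4\Bigl(\tfrac{139}{203}\Bigr)^{1/4}.
\]
We must show this is $<23/4$. Bounding each fourth root from above by a rational, checked by raising to the fourth power, should suffice, since there is visible slack: roughly $0.805+4\cdot0.910\approx4.44$ against $5.75$.

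For $s=2$ no neighbour is the origin, and we use the exact multiplicities $\mu_0=1$, $\mu_1=2$. This gives the quotient
\[
2\Bigl(\tfrac{203}{139}\Bigr)^{1/4}+2\Bigl(\tfrac{203}{267}\Bigr)^{1/4}+2\Bigl(\tfrac{203}{395}\Bigr)^{1/4},
\]
to be compared with $6-1/8$. Again we use rational upper bounds verified by fourth powers. The values are about $2.20+1.87+1.69=5.76<5.875$, so there is margin here too. The shell $s=3$, i.e.\ $(1,1,1)$, can be handled either exactly or by the general argument below.

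\emph{Outer shells $s\ge 3$.} Lemma~\ref{lem:angular} bounds the quotient by $\mathcal R_{3,\beta}(s)$, so it suffices to prove \eqref{eq:critical-axial} strictly, with $\beta=\beta_3^*$. By Proposition~\ref{prop:critical-shift}, the $s^{-2}$ term vanishes at this shift. The sign of $\mathcal R_{3,\beta}(s)-6+1/(4s)$ is therefore decided at order $s^{-3}$, and the whole difficulty lies there.

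The plan is to set $r=s^{-1/2}\in(0,1/\sqrt3]$ and work with \eqref{eq:R-r-form}, where $p=1/4$ and $b=139/64$. We expand each factor using the alternating Ces\`aro series \eqref{eq:cesaro-taylor} with explicit remainders. The sign pattern of $k^{1/4}(m)$ gives one-sided Taylor bounds: truncating an alternating series with decreasing terms after an odd or even number of terms bounds $(1+x)^{-p}$ from above or below. This applies for $x=br^2$ and $1+\beta r^2$.

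The term $(1-2r+br^2)^{-1/4}$ is delicate when $r$ is not small, since then $x=-2r+br^2$ has no fixed sign. For it we use instead the even-kernel representation \eqref{eq:even-kernel}, pairing it with the $+2r$ term as in the proof of Lemma~\ref{lem:angular}. This pairing gives a positive series in $4r^2/a^2$ with $a=1+br^2$, and a positive series is easy to bound from above by a finite sum plus a geometric tail.

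The result is an explicit upper bound of the form $6-r^2/4-c_3r^6+(\text{error})$, and we must show $c_3>0$ and that it dominates the error. First we compute the $r^6$ coefficient exactly. If it has the right (negative) sign, the tail is controlled on $s\ge s_0$ for a moderate $s_0$. Any remaining shells $3\le s<s_0$ are finitely many values of $s$, and we verify them by rational interval evaluation of $\mathcal R_{3,\beta}(s)$ as in the inner shells.

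\emph{Main obstacle.} The obstacle is this $O(s^{-3})$ analysis. The endpoint shift kills the $s^{-2}$ term, so strictness rests entirely on the sign of the next coefficient together with uniform remainder bounds. If the uniform bound proves messy, we will try a fallback: show that the single-variable function $g(s)=s^3\bigl(6-1/(4s)-\mathcal R_{3,\beta}(s)\bigr)$ is positive for $s\ge3$. We would do this by checking $g$ on a grid of rational points and bounding $g'$ crudely between grid points, which gives a finite, certifiable computation.
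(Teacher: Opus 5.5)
Your outline is sound and, once its open computations are carried out, proves the lemma. For $s\ge2$, however, it takes a heavier route than the paper.

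\textbf{How the paper argues.} The shell $s=1$ is settled by monotonicity alone: the five nonzero neighbours of $e_1$ all have larger norm, so their sum is below $5h_3(e_1)<\tfrac{23}{4}h_3(e_1)$. All $s\ge2$ are then treated at once, including the shells $s=2,3$ that you evaluate separately.

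\begin{itemize}
\item Put $u=1/(64s+139)$. Then exactly $\frac{s+75/64}{s+139/64}=1-64u$, $\frac{4s}{(s+139/64)^2}=z:=256u(1-139u)$ and $\frac{1}{4s}=\frac{16u}{1-139u}$.
\item After the angular reduction and the same even-kernel pairing you propose, the axial inequality becomes $2(1-64u)^{1/4}\bigl(2+\mathcal K_{1/4}(z)\bigr)<6-\frac{16u}{1-139u}$.
\item The low-order bounds $(1-64u)^{1/4}\le1-16u-384u^2$ and $\mathcal K_{1/4}(z)\le1+\frac{5}{32}z+\frac{195}{2048}\frac{z^2}{1-z}$ reduce this to positivity of one explicit polynomial $P_3(u)$ on $(0,1/267]$.
\end{itemize}

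This needs no split into an asymptotic regime plus finitely many shells, no remainder estimate, and no interval evaluation of $\sqrt s$.

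\textbf{What your route costs and buys.} Your expansion in $r=s^{-1/2}$ needs three things: truncations exact through $r^6$, a quantitative remainder bound, and a finite list of shell checks. In return it exposes the true leading margin.

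The coefficient you leave open does have the right sign:
$6-\frac{r^2}{4}-\mathcal R_{3,75/64}(s)=c_3r^6+O(r^8)$ with $c_3=\frac{57663}{16384}\approx3.52$.

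The paper's cruder bounds certify only about $0.068\,s^{-3}$ of this, so they spend roughly $98\%$ of the asymptotic slack. Your exact-through-$r^6$ truncations would leave considerably more room for the remainder.

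\textbf{Two small corrections.}

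First, for $0<r\le1/\sqrt3$ one has $\frac{139}{64}r<2$, so $-2r+br^2$ is negative throughout, not of indefinite sign. The real difficulty is that the series for $(1-2r+br^2)^{-1/4}$ then has only positive terms. Its truncations are therefore lower bounds rather than upper bounds, which is exactly why the even-kernel pairing with a geometric tail is needed.

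Second, your fallback grid check cannot cover the unbounded range $s\ge3$ as stated. Run it in the compact variable $r\in[0,1/\sqrt3]$ (or in $u$). There $g$ extends continuously with $g=c_3$ at $r=0$.
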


\begin{proof}
Let $s=|n|^2\ge2$.  By Lemma~\ref{lem:angular}, it is enough to prove the
axial inequality \eqref{eq:critical-axial}.  Set
\begin{equation}\label{eq:N3-variables}
 u:=\frac1{64s+139},
 \qquad z:=256u(1-139u).
\end{equation}
Since $s\ge2$,
\[
 0<u\le\frac1{267},\qquad 0<z<1.
\]
Moreover,
\[
 \frac{s+75/64}{s+139/64}=1-64u,
 \qquad
 \frac{4s}{(s+139/64)^2}=z.
\]
Using \eqref{eq:even-kernel} in the two axial terms of
\eqref{eq:axial-ratio}, one obtains the exact identities
\begin{align}
 \frac12\mathcal R_{3,75/64}(s)
 &=(1-64u)^{1/4}\bigl(2+\mathcal K_{1/4}(z)\bigr),
 \label{eq:N3-R-transform}\\
 \frac12\left(6-\frac1{4s}\right)
 &=3-\frac{8u}{1-139u}.\label{eq:N3-critical-transform}
\end{align}

The binomial expansion of $(1-x)^{1/4}$ has negative coefficients from the
linear term onward.  Hence, for $0\le x<1$,
\[
 (1-x)^{1/4}\le1-\frac{x}{4}-\frac{3x^2}{32},
\]
and therefore
\begin{equation}\label{eq:N3-binomial-bound}
 (1-64u)^{1/4}\le1-16u-384u^2.
\end{equation}
By \eqref{eq:cesaro}, the coefficients $k^{1/4}(m)$ decrease.  Since
\[
 k^{1/4}(2)=\frac5{32},\qquad
 k^{1/4}(4)=\frac{195}{2048},
\]
the tail of \eqref{eq:even-kernel} is bounded by a geometric series:
\begin{equation}\label{eq:N3-kernel-bound}
 \mathcal K_{1/4}(z)
 \le1+\frac5{32}z+\frac{195}{2048}\frac{z^2}{1-z}.
\end{equation}
Both factors on the right sides of \eqref{eq:N3-binomial-bound} and
\eqref{eq:N3-kernel-bound} are positive in the stated range.  Substitution of
\eqref{eq:N3-variables} gives
\begin{align}
 &3-\frac{8u}{1-139u}
 -(1-16u-384u^2)
 \left(3+\frac5{32}z+\frac{195}{2048}\frac{z^2}{1-z}\right)\notag\\
 &\hspace{18mm}=
 \frac{8u^3P_3(u)}{(1-139u)(1-256u+35584u^2)},\label{eq:N3-rational-identity}
\end{align}
where
\begin{equation}\label{eq:P3}
 P_3(u)=1119+u(13708556-2401602524u)
       +u^3(10356056000+515638848000u).
\end{equation}
The denominators in \eqref{eq:N3-rational-identity} are positive because
$1-139u>0$ and $1-256u+35584u^2=1-z>0$.  Furthermore,
\[
 13708556-2401602524u
 \ge13708556-\frac{2401602524}{267}
 =\frac{1258581928}{267}>0.
\]
Thus $P_3(u)>0$, so \eqref{eq:N3-R-transform}--\eqref{eq:N3-rational-identity}
prove the strict axial inequality for $s\ge2$.

It remains to consider $s=1$.  Up to a lattice symmetry, $n=e_1$.
Write $F_3(q)=(q+75/64)^{-1/4}$ for $q\ge1$ and set $H_3(0)=0$,
$H_3(q)=F_3(q)$ for $q\ge1$.  Since $\mu_0(e_1)=2$ and
$\mu_1(e_1)=1$, formula \eqref{eq:radial-neighbour} gives
\[
 \sum_{m\sim e_1}h_3(m)=F_3(4)+4F_3(2)<5F_3(1)
 <\frac{23}{4}F_3(1),
\]
where the strict inequalities use the monotonicity of $F_3$.
Equivalently, $-\lap h_3(e_1)>\tfrac14h_3(e_1)$, which completes the proof.
\end{proof}

\subsection{The four-dimensional supersolution}

In dimension four the exponent is $1/2$, and the same strategy leads to
slightly simpler binomial coefficients.  The shift $17/8$ is again forced by
the expansion at infinity, while the remaining issue is the sign of the
finite-shell residual.

The following lemma verifies this sign uniformly and then checks the first
shell directly.  Together with the ground-state representation it supplies the
matching lower bound for $C(4)$.

\begin{lemma}\label{lem:N4}
Let
\[
 h_4(0)=0,
 \qquad
 h_4(n)=\left(|n|^2+\frac{17}{8}\right)^{-1/2}
 \quad(n\ne0).
\]
Then
\begin{equation}\label{eq:N4-super}
 -\lap h_4(n)>\frac{1}{|n|^2}h_4(n)
 \qquad(n\ne0).
\end{equation}
\end{lemma}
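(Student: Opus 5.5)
We follow the pattern of Lemma~\ref{lem:N3}, treating $s=|n|^2\ge2$ by the axial reduction and the first shell $s=1$ by a direct count. For $s\ge2$ no neighbour of $n$ is the origin, so Lemma~\ref{lem:angular} applies with $F(q)=(q+17/8)^{-1/2}$. It then suffices to prove the strict axial inequality \eqref{eq:critical-axial} with $N=4$, $\beta=17/8$, $A_4=1$:
\[
 \mathcal R_{4,17/8}(s)<8-\frac1s\qquad(s\ge2).
\]

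For this axial inequality we introduce rational variables adapted to the shift. Since $s+1+\beta=s+25/8$, put $u:=1/(8s+25)$, so that $0<u\le 1/41$. Then
\[
 \frac{s+17/8}{s+25/8}=1-8u,
 \qquad
 z:=\frac{4s}{(s+25/8)^2}=256u(1-25u).
\]
By \eqref{eq:even-kernel} with $\alpha=1/2$,
\[
 \tfrac12\mathcal R_{4,17/8}(s)=(1-8u)^{1/2}\bigl(3+\mathcal K_{1/2}(z)\bigr),
\]
while the right-hand side becomes $\tfrac12(8-1/s)=4-4u/(1-25u)$, using $1/s=8u/(1-25u)$. We then take upper bounds with explicit rounding directions. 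First, $(1-x)^{1/2}\le 1-x/2-x^2/8$ for $0\le x<1$, since the binomial coefficients are negative beyond the constant term; this gives $(1-8u)^{1/2}\le 1-4u-8u^2$. Second, $k^{1/2}(m)$ is decreasing, with $k^{1/2}(2)=3/8$ and $k^{1/2}(4)=35/128$, so
\[
 \mathcal K_{1/2}(z)\le 1+\tfrac38 z+\tfrac{35}{128}\,\tfrac{z^2}{1-z}.
\]
Multiplying these bounds and subtracting from $4-4u/(1-25u)$ yields a rational function of $u$. Its denominator is $(1-25u)(1-z)$, which is positive, and its numerator should factor as $u^3P_4(u)$. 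The cancellation of the $u^0,u^1,u^2$ terms is exactly the content of Proposition~\ref{prop:critical-shift} at $\beta=\beta_4^*$: the orders $s^{0},s^{-1},s^{-2}$ agree. It remains to show $P_4(u)>0$ on $(0,1/41]$, by bounding the negative coefficients crudely with $u\le1/41$ as in the three-dimensional case.

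\textbf{Main obstacle.} The hard step is this positivity check. Because the first two error orders cancel, the third-order term must be positive, and the truncation of the kernel tail by the geometric bound must not overwhelm it. If the constant term of $P_4$ turns out negative or too small, we would keep one more kernel coefficient, $k^{1/2}(6)=231/1024$, and one more binomial term before passing to the geometric tail. If the bound still fails at small $s$, we would check the shells $s=2,3,4,\dots$ up to a modest threshold directly from \eqref{eq:radial-neighbour}, using the exact multiplicities and rational bounds on the square roots.

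For the first shell $s=1$, take $n=e_1$, so that $\mu_0=3$ and $\mu_1=1$. With $F_4(q)=(q+17/8)^{-1/2}$, formula \eqref{eq:radial-neighbour} with value $0$ at the origin gives
\[
 \sum_{m\sim e_1}h_4(m)=F_4(4)+6F_4(2)<7F_4(1)=(8-1)F_4(1),
\]
by monotonicity of $F_4$. This is exactly $-\lap h_4(e_1)>h_4(e_1)$. Combining the two cases proves \eqref{eq:N4-super}.
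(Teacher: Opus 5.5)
Your route is the paper's own proof. It uses the same substitution $u=1/(8s+25)$, the same one-sided bounds $\sqrt{1-8u}\le 1-4u-8u^2$ and $\mathcal K_{1/2}(z)\le 1+\frac38 z+\frac{35}{128}\frac{z^2}{1-z}$, the same reduction to a numerator of the form $u^3P_4(u)$, and the same monotonicity check on the shell $s=1$. Your explanation of why the $u^0,u^1,u^2$ terms cancel, via Proposition~\ref{prop:critical-shift} at $\beta_4^*$ together with the $O(u^3)$ size of both truncation errors, is also sound.

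There is, however, one algebraic slip that breaks the computation as written. Since $s+25/8=1/(8u)$ and $s=(1-25u)/(8u)$,
\[
 z=\frac{4s}{(s+25/8)^2}=\frac{1-25u}{2u}\cdot 64u^2=32u(1-25u),
\]
not $256u(1-25u)$. The factor $256$ comes from the three-dimensional substitution, where $s+139/64=1/(64u)$. With your value, $z=4096/1681>1$ already at $s=2$, and in fact $z>1$ on every shell $2\le s\le 25$. So the series \eqref{eq:even-kernel} is unavailable there, $1-z<0$, and your claimed positive denominator and low-order cancellation both fail. With the correct value, $0<z\le 8/25$ and $1-z=1-32u+800u^2>0$.

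After this correction your outline goes through with none of the fallbacks. The difference equals
\[
 \frac{4u^3P_4(u)}{(1-25u)(1-32u+800u^2)},\qquad
 P_4(u)=639-u(4558+730650u)+u^3(1235000+3250000u).
\]
Dropping the positive last group and using $u\le 1/41$ gives $P_4(u)\ge 156631/1681>0$.
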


\begin{proof}
Let $s=|n|^2\ge2$ and use Lemma~\ref{lem:angular}.  Put
\begin{equation}\label{eq:N4-variables}
 u:=\frac1{8s+25},
 \qquad z:=32u(1-25u).
\end{equation}
Then
\[
 0<u\le\frac1{41},\qquad 0<z<1,
\]
and direct substitution in \eqref{eq:axial-ratio} gives
\begin{align}
 \frac12\mathcal R_{4,17/8}(s)
 &=\sqrt{1-8u}\bigl(3+\mathcal K_{1/2}(z)\bigr),
 \label{eq:N4-R-transform}\\
 \frac12\left(8-\frac1s\right)
 &=4\frac{1-26u}{1-25u}.\label{eq:N4-critical-transform}
\end{align}
The binomial expansion gives
\begin{equation}\label{eq:N4-binomial-bound}
 \sqrt{1-8u}\le1-4u-8u^2.
\end{equation}
The polynomial on the right is positive because
$1-4u-8u^2\ge1-4/41-8/41^2>0$.
Also, $k^{1/2}(m)$ decreases and
\[
 k^{1/2}(2)=\frac38,
 \qquad k^{1/2}(4)=\frac{35}{128}.
\]
Consequently,
\begin{equation}\label{eq:N4-kernel-bound}
 \mathcal K_{1/2}(z)
 \le1+\frac38z+\frac{35}{128}\frac{z^2}{1-z}.
\end{equation}
After inserting \eqref{eq:N4-variables}, the difference between the right
side of \eqref{eq:N4-critical-transform} and the product of the upper bounds
in \eqref{eq:N4-binomial-bound}--\eqref{eq:N4-kernel-bound} is
\begin{align}
 &4\frac{1-26u}{1-25u}
 -(1-4u-8u^2)
 \left(4+\frac38z+\frac{35}{128}\frac{z^2}{1-z}\right)\notag\\
 &\hspace{18mm}=
 \frac{4u^3P_4(u)}{(1-25u)(1-32u+800u^2)},\label{eq:N4-rational-identity}
\end{align}
where
\begin{equation}\label{eq:P4}
 P_4(u)=639-u(4558+730650u)
       +u^3(1235000+3250000u).
\end{equation}
The two denominator factors are positive, the second being $1-z$.  Dropping
the last positive term in \eqref{eq:P4} and using $u\le1/41$ gives
\[
 P_4(u)\ge639-\frac{4558}{41}-\frac{730650}{41^2}
 =\frac{156631}{1681}>0.
\]
This proves the strict supersolution inequality for $s\ge2$.

For $s=1$, take $n=e_1$.  Define $F_4(q)=(q+17/8)^{-1/2}$ for
$q\ge1$ and $H_4(0)=0$, $H_4(q)=F_4(q)$ for $q\ge1$.
Since $\mu_0(e_1)=3$ and $\mu_1(e_1)=1$, formula
\eqref{eq:radial-neighbour} yields
\[
 \sum_{m\sim e_1}h_4(m)=F_4(4)+6F_4(2)<7F_4(1),
\]
by strict monotonicity of $F_4$.  This is equivalent to
$-\lap h_4(e_1)>h_4(e_1)$.
\end{proof}

The pointwise estimates now give the exact low-dimensional constants, while
the asymptotic obstruction identifies the endpoint shifts within the chosen
radial family.

\begin{proposition}\label{prop:low-dimensional-values}
One has
\[
 C(3)=\frac14,\qquad C(4)=1.
\]
\end{proposition}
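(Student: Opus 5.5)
The plan is to combine the two inequalities already established. The upper bounds come directly from Proposition~\ref{prop:upper}, which gives $C(N)\le A_N$ for every $N\ge3$. Specialising to $N=3$ and $N=4$ yields $C(3)\le A_3=1/4$ and $C(4)\le A_4=1$. So only the matching lower bounds remain.

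For the lower bounds I will use the ground-state representation, Lemma~\ref{lem:groundstate}. For $N=3$ take $h=h_3$ from Lemma~\ref{lem:N3}. It is strictly positive on $X_3$, it satisfies $h(0)=0$, and it fulfils $-\lap h_3(n)>h_3(n)/(4|n|^2)$ for every $n\neq0$. Multiplying by $|n|^2/h_3(n)>0$ gives the supersolution condition \eqref{eq:supersolution} with $\lambda=1/4$. Lemma~\ref{lem:groundstate} then yields $C(3)\ge 1/4$.

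The case $N=4$ is identical with $h=h_4$ from Lemma~\ref{lem:N4} and $\lambda=1$, giving $C(4)\ge1$. One point to confirm is that the Laplacian in Lemmas~\ref{lem:N3}--\ref{lem:N4} is evaluated with the value $0$ at the origin. This matches the convention in Lemma~\ref{lem:groundstate}, and it is exactly why the shell $s=1$ was treated separately in those lemmas.

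I do not expect a real obstacle, since the substantive work (the angular convexity reduction and the rational certificates on shells $s\ge2$, plus the shell $s=1$) is already done in Lemmas~\ref{lem:angular}, \ref{lem:N3} and \ref{lem:N4}. The only care needed is bookkeeping: \eqref{eq:gsr} is an identity for finitely supported $u$ with $u(0)=0$, and the remainder term on its right side is nonnegative. Together these give $\cE_N[u]\ge\lambda\cD_N[u]$ for every admissible $u$, hence $C(N)\ge\lambda$ by \eqref{eq:def-CN}. Combined with the upper bounds, this gives the equalities. I will also note that the strict inequality in the supersolution estimates leaves a positive spatial remainder, $\sum_{n\neq0}\bigl(|n|^2(-\lap h)(n)/h(n)-\lambda\bigr)|u(n)|^2/|n|^2\ge0$, for use later in the paper.
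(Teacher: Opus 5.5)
Your proposal is correct and follows the paper's proof exactly. The upper bounds come from Proposition~\ref{prop:upper}, and the lower bounds come from applying the ground-state representation of Lemma~\ref{lem:groundstate} to the strict supersolutions $h_3$ and $h_4$ of Lemmas~\ref{lem:N3} and~\ref{lem:N4}, with the convention $h(0)=0$ used consistently in both.
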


\begin{proof}
Lemma~\ref{lem:groundstate} applied to Lemmas~\ref{lem:N3} and
\ref{lem:N4} yields $C(3)\ge1/4$ and $C(4)\ge1$.  Proposition~\ref{prop:upper}
gives the reverse inequalities.
\end{proof}

The shifts appearing in Lemmas~\ref{lem:N3} and \ref{lem:N4} have an
intrinsic endpoint characterization.  Proposition~\ref{prop:critical-shift}
shows that the
behaviour at infinity forces $\beta\ge\beta_N^*$, while the two lemmas show
that in dimensions three and four the endpoint value itself already controls
every finite shell.  Thus these are the smallest perturbations of the
continuum radial power within the family \eqref{eq:pN-hbeta}; no optimality
among arbitrary supersolutions is asserted.

\begin{proposition}\label{prop:minimal-shifts}
Among the functions
\[
 h_{N,\beta}(n)=(|n|^2+\beta)^{-(N-2)/4}\quad(n\ne0),
 \qquad h_{N,\beta}(0)=0,\quad\beta>0,
\]
the smallest shifts that satisfy
$-\lap h_{N,\beta}(n)\ge A_N|n|^{-2}h_{N,\beta}(n)$ at every $n\ne0$
in dimensions three and four are
\begin{equation}\label{eq:optimal-shifts-intro}
 \beta_3^{\rm opt}=\frac{75}{64},
 \qquad
 \beta_4^{\rm opt}=\frac{17}{8}.
\end{equation}
\end{proposition}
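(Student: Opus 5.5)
The statement has two halves: admissibility of the endpoint shifts and minimality among all shifts. Both come almost directly from results already proved.

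\emph{Admissibility.} Lemma~\ref{lem:N3} shows that $\beta=75/64=\beta_3^*$ satisfies the supersolution inequality at every $n\ne0$ for $N=3$. Lemma~\ref{lem:N4} shows the same for $\beta=17/8=\beta_4^*$ when $N=4$. In both cases the inequality is strict, so the required non-strict inequality holds.

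\emph{Minimality.} Suppose $\beta<\beta_N^*$ with $N\in\{3,4\}$, and evaluate at the axial points $n=(m,0,\ldots,0)$ with $m\ge2$. There $\mu_0=N-1$ and $\mu_m=1$, so by \eqref{eq:radial-neighbour} the neighbour quotient $\sum_{k\sim n}h_{N,\beta}(k)/h_{N,\beta}(n)$ equals $\mathcal R_{N,\beta}(m^2)$ exactly. No neighbour is the origin, so the value $h(0)=0$ plays no role.

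By the expansion \eqref{eq:affine-expansion},
\[
 \mathcal R_{N,\beta}(m^2)-\Bigl(2N-\frac{A_N}{m^2}\Bigr)
 =\frac{(N-2)(\beta_N^*-\beta)}{m^4}+O(m^{-6}).
\]
Since $N-2>0$, the leading term is positive, so the difference is positive for all large $m$. This means $-\lap h_{N,\beta}(n)<A_N|n|^{-2}h_{N,\beta}(n)$ at those points, which is exactly the final assertion of Proposition~\ref{prop:critical-shift}.

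One point needs checking: that the $O(s^{-3})$ remainder in \eqref{eq:affine-expansion} is harmless for a fixed $\beta$. This holds because the bracket in \eqref{eq:R-r-form} is analytic in $r$ near $0$, so the remainder is controlled for each fixed $\beta$ as $r\to0$.

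Combining the two halves, every admissible shift satisfies $\beta\ge\beta_N^*$, and $\beta_N^*$ itself is admissible. So the set of admissible $\beta$ has minimum exactly $\beta_N^*$, and \eqref{eq:beta34} gives the stated values $75/64$ and $17/8$.

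There is no real obstacle here; the substantive work was done in Lemmas~\ref{lem:N3} and \ref{lem:N4}. Note that the claim is only that $\beta_N^*$ is the smallest admissible shift. It is not claimed that every $\beta>\beta_N^*$ works, so no monotonicity in $\beta$ is needed.
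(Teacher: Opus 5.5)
Your proof is correct and follows the paper's argument: minimality comes from the axial asymptotic obstruction of Proposition~\ref{prop:critical-shift} together with \eqref{eq:beta34}, and admissibility of the endpoint shifts comes from the strict supersolution inequalities of Lemmas~\ref{lem:N3} and~\ref{lem:N4}. Your re-derivation of the axial failure for $\beta<\beta_N^*$, including the check that for $m\ge2$ no neighbour is the origin, simply unpacks the final step of that proposition's proof.
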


\begin{proof}
Proposition~\ref{prop:critical-shift}, together with \eqref{eq:beta34}, shows
that no smaller shift can work.  Lemmas~\ref{lem:N3} and \ref{lem:N4} show
that the two displayed endpoint shifts do produce global supersolutions.
\end{proof}

\subsection{Sharp scalar constants and positive remainders}

Sharpness of a scalar coefficient only excludes a uniform increase of that
coefficient while the weight is fixed.  It does not exclude an additional
spatially varying positive weight.  The strict pointwise inequalities proved
above make this distinction concrete.

For a nonnegative graph form $Q$, we use the standard terminology that $Q$
is subcritical if there is a nonzero nonnegative weight $V$ with
$Q[u]\ge\sum_n V(n)|u(n)|^2$ for every finitely supported $u$; otherwise it
is critical.  See \cite{KellerPinchoverPogorzelski2020}.  In the
present setting the remainder is positive at every vertex, a stronger
property than is required by this definition.

\begin{corollary}\label{cor:subcritical}
For $N=3,4$, let $h_N$ be the supersolution in Lemma~\ref{lem:N3} or
Lemma~\ref{lem:N4}, respectively, and define
\begin{equation}\label{eq:positive-remainder}
 V_N(n):=\frac{(-\lap h_N)(n)}{h_N(n)}-\frac{A_N}{|n|^2},
 \qquad n\in X_N.
\end{equation}
Then $V_N(n)>0$ for every $n\in X_N$, and
\begin{equation}\label{eq:remainder-Hardy}
 \cE_N[u]-A_N\cD_N[u]
 \ge\sum_{n\in X_N}V_N(n)|u(n)|^2
\end{equation}
for every finitely supported $u$ with $u(0)=0$.
Thus the form on the left is subcritical, although $C(N)=A_N$.
In particular, $A_N|n|^{-2}$ is not a pointwise optimal Hardy weight.
\end{corollary}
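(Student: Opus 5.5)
The corollary is a direct combination of the ground-state representation with the strict pointwise inequalities already proved. We use Lemma~\ref{lem:groundstate} with $h=h_N$, where $h_N>0$ on $X_N$ and $h_N(0)=0$, exactly as that lemma requires. For every finitely supported $u$ with $u(0)=0$, identity \eqref{eq:gsr} states that $\cE_N[u]-\sum_{n\ne0}\frac{(-\lap h_N)(n)}{h_N(n)}|u(n)|^2$ equals a sum of the nonnegative terms $h_N(n)h_N(m)|u(n)/h_N(n)-u(m)/h_N(m)|^2$. By the definition \eqref{eq:positive-remainder}, the potential splits as $A_N|n|^{-2}+V_N(n)$. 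Dropping the nonnegative edge remainder then gives \eqref{eq:remainder-Hardy} at once. No convergence issue arises, because every sum is finite.

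Positivity of $V_N$ at every vertex is the content of the strict inequalities \eqref{eq:N3-super} and \eqref{eq:N4-super}. Dividing these by $h_N(n)>0$ gives $V_N(n)>0$ for all $n\in X_N$. Note that the proofs of Lemmas~\ref{lem:N3} and \ref{lem:N4} establish strictness on every shell: for $s\ge2$ through the strict positivity of $P_3$ and $P_4$, and for $s=1$ through strict monotonicity of $F_3$ and $F_4$. So nothing new needs checking. Taking $V=V_N$, which is nonzero and nonnegative, in the definition of subcriticality shows that the form $\cE_N-A_N\cD_N$ is subcritical. Combined with Proposition~\ref{prop:low-dimensional-values}, we have $C(N)=A_N$ simultaneously.

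For the last sentence, recall that a pointwise optimal Hardy weight is, in the sense of \cite{KellerPinchoverPogorzelski2020}, in particular critical. This means no nonzero nonnegative weight can be added to it. Inequality \eqref{eq:remainder-Hardy} exhibits exactly such an addition, namely $A_N|n|^{-2}+V_N$, so $A_N|n|^{-2}$ fails criticality and hence is not optimal. The only point needing care is this terminological step: we must make sure to use the criticality component of the definition of optimality, rather than null-criticality. Once that is done the argument is immediate, and I expect no real obstacle.
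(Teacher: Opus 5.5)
Your proposal is correct and follows essentially the same route as the paper. You read off $V_N>0$ from the strict inequalities of Lemmas~\ref{lem:N3} and~\ref{lem:N4}, substitute $(-\lap h_N)/h_N=A_N|n|^{-2}+V_N$ into \eqref{eq:gsr}, discard the nonnegative edge sum, and conclude subcriticality and non-optimality from the definitions. The paper also remarks that $V_N\ge\varepsilon|n|^{-2}$ cannot hold uniformly, since that would contradict $C(N)=A_N$; this explains why $C(N)=A_N$ is still consistent, but your argument does not need it.
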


\begin{proof}
Positivity of $V_N$ is exactly the strict inequality in
Lemma~\ref{lem:N3} or Lemma~\ref{lem:N4}.  Substituting
$(-\lap h_N)/h_N=A_N|n|^{-2}+V_N$ in \eqref{eq:gsr} and discarding its
nonnegative transformed edge sum proves \eqref{eq:remainder-Hardy}.
This proves subcriticality and pointwise improvability by their definitions.
There is no contradiction with scalar sharpness: if
$V_N(n)\ge\varepsilon|n|^{-2}$ held for some $\varepsilon>0$ at every
vertex, then \eqref{eq:remainder-Hardy} would imply
$C(N)\ge A_N+\varepsilon$, contrary to Proposition~\ref{prop:upper}.
\end{proof}

The remainder in Corollary~\ref{cor:subcritical} is therefore a pointwise
improvement, not an optimal remainder.  Its relation with the endpoint
construction is worth noting: at $\beta=\beta_N^*$ the $s^{-2}$ correction
in the axial expansion \eqref{eq:affine-expansion} vanishes.  Thus the
supersolution is tuned precisely so that its leading asymptotic potential
remains $A_N|n|^{-2}$ while a strictly positive finite-scale remainder
survives.  This is the mechanism behind the coexistence of scalar sharpness
and subcriticality; no criticality or optimality claim for $V_N$ is made.

\section{A Gaussian Ritz construction in dimension nine}

The dimension-nine trial space is built from lattice Gaussians.  Its
motivation comes from the Gamma--Laplace representation of the formal
continuum Hardy ground state, while its analysis uses tensor factorization
and the Jacobi theta transformation.  After the scales are fixed, all bounds
needed for the strict inequality are one-sided and exact.

For real-valued functions $f,g$ on $X_N$, extended by zero to the origin, we
use the polarizations
\begin{align*}
 \cE_N(f,g)&:=\sum_{j=1}^N\sum_{n\in\Z^N}
 (f(n+e_j)-f(n))(g(n+e_j)-g(n)),\\
 \cD_N(f,g)&:=\sum_{n\in X_N}\frac{f(n)g(n)}{|n|^2}.
\end{align*}
Thus $\cE_N[f]=\cE_N(f,f)$ and $\cD_N[f]=\cD_N(f,f)$.

\subsection{Admissibility, theta kernels, and Gaussian factorization}

For $N\ge1$ and $0<q<1$, define
\begin{equation}\label{eq:gq}
 g_{q,N}(0)=0,\qquad
 g_{q,N}(n)=q^{|n|^2}\quad(n\in\Z^N\setminus\{0\}).
\end{equation}
The following approximation fact will be used whenever Gaussian functions are
inserted into a variational problem initially defined on finitely supported
functions.

\begin{lemma}\label{lem:form-domain}
Let $N\ge1$ and $0<q<1$.  Then $g_{q,N}\in\ell^2(X_N)$ and both
$\cE_N[g_{q,N}]$ and $\cD_N[g_{q,N}]$ are finite.  If
\[
 g_{q,N,R}:=g_{q,N}\ind_{\{\|n\|_\infty\le R\}},
\]
then
\[
 \|g_{q,N}-g_{q,N,R}\|_2\longrightarrow0,
\]
\[
 \cE_N[g_{q,N}-g_{q,N,R}]\longrightarrow0,
 \qquad
 \cD_N[g_{q,N}-g_{q,N,R}]\longrightarrow0.
\]
The same conclusions hold for every finite linear combination of Gaussian
profiles $g_{q,N}$.
\end{lemma}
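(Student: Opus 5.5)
The plan is to reduce every assertion to the elementary bounds $\cD_N[f]\le\|f\|_2^2$ and $\cE_N[f]\le 4N\|f\|_2^2$, valid for any $f\in\ell^2(\Z^N)$ with $f(0)=0$. The first bound holds because $|n|^2\ge1$ on $X_N$. The second follows from
\[
|f(n+e_j)-f(n)|^2\le 2|f(n+e_j)|^2+2|f(n)|^2,
\]
after summing over $n$ and $j$ and using translation invariance of the $\ell^2$ norm. With these two inequalities, finiteness of $\cE_N[g_{q,N}]$ and $\cD_N[g_{q,N}]$ follows from $g_{q,N}\in\ell^2(X_N)$. Convergence of the truncations in energy and in $\cD_N$ follows from their convergence in $\ell^2$.

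First I show $g_{q,N}\in\ell^2$ by tensor factorization:
\[
\sum_{n\in\Z^N}q^{2|n|^2}=\Bigl(\sum_{k\in\Z}q^{2k^2}\Bigr)^N,
\]
and the one-dimensional series converges because $q^{2k^2}\le q^{2|k|}$ and $0<q<1$. Removing the origin only decreases the sum.

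Next, $g_{q,N}-g_{q,N,R}$ is the restriction of $g_{q,N}$ to $\{\|n\|_\infty>R\}$. Its squared $\ell^2$ norm is a tail of a convergent series, so it tends to $0$ by dominated convergence, or more simply because the sets $\{\|n\|_\infty\le R\}$ exhaust $\Z^N$. Since $g_{q,N,R}(0)=0$, the difference also vanishes at the origin, and the two elementary bounds give
\[
\cE_N[g_{q,N}-g_{q,N,R}]\le4N\|g_{q,N}-g_{q,N,R}\|_2^2\to0
\]
and likewise $\cD_N[g_{q,N}-g_{q,N,R}]\to0$. I will note that $\cE_N$ here is defined on $\ell^2$ functions by the same absolutely convergent double sum, so all quantities make sense.

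For a finite linear combination $f=\sum_i c_i g_{q_i,N}$, I use that $\ell^2$ is a vector space and that truncation is linear, so $f-f_R=\sum_i c_i(g_{q_i,N}-g_{q_i,N,R})$. Minkowski's inequality then gives $\|f-f_R\|_2\to0$, and the same two bounds transfer this to $\cE_N$ and $\cD_N$. There is no real obstacle. The only point needing care is to record the crude bound $\cE_N[f]\le4N\|f\|_2^2$ for general $\ell^2$ functions, including the edges at the origin, which is immediate since $f(0)=0$. This is also the fact that later justifies extending the Rayleigh quotient inequality $\cE_N\ge C(N)\cD_N$ to Gaussian trial functions by continuity.
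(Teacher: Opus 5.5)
Your proposal is correct and follows essentially the same route as the paper: tensor factorization gives $g_{q,N}\in\ell^2$, the crude bounds $\cE_N[f]\le 4N\|f\|_2^2$ and $\cD_N[f]\le\|f\|_2^2$ hold for $f$ vanishing at the origin, the $\ell^2$ tail outside growing cubes vanishes, and linearity extends this to finite combinations. The paper adds one line you only allude to, namely $\bigl|\sqrt{Q[f]}-\sqrt{Q[f_R]}\bigr|\le\sqrt{Q[f-f_R]}$ for $Q=\cE_N,\cD_N$. This upgrades $Q[f-f_R]\to0$ to convergence of the values $Q[f_R]\to Q[f]$, which is what is needed when the Rayleigh quotient is passed to the limit.
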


\begin{proof}
The product structure gives
\[
 \sum_{n\in\Z^N}q^{2|n|^2}
 =\left(\sum_{k\in\Z}q^{2k^2}\right)^N<\infty.
\]
For any $f\in\ell^2(X_N)$ extended by zero to the origin,
$|z-w|^2\le2|z|^2+2|w|^2$ implies
\begin{equation}\label{eq:bounded-forms}
 \cE_N[f]\le4N\|f\|_2^2,
 \qquad
 \cD_N[f]\le\|f\|_2^2.
\end{equation}
Indeed, translation preserves the full-lattice $\ell^2$ norm, and
$|n|^{-2}\le1$ on $X_N$.  Since the $\ell^2$ tail of $g_{q,N}$ outside a
growing cube tends to zero, \eqref{eq:bounded-forms} gives convergence of
both quadratic forms on the truncation error.  Finite linear combinations are
treated in exactly the same way.  Finally, for either form
$Q=\cE_N,\cD_N$,
\[
 \bigl|\sqrt{Q[f]}-\sqrt{Q[f_R]}\bigr|
 \le\sqrt{Q[f-f_R]},
\]
which follows from the triangle inequality for the corresponding difference
or weighted-value sequence in $\ell^2$.  Hence the values of the forms
converge as well.
\end{proof}

For $0<x<1$, let
\begin{equation}\label{eq:theta}
 \vartheta(x):=\sum_{k\in\Z}x^{k^2}
 =1+2\sum_{k=1}^{\infty}x^{k^2}.
\end{equation}
This is the Jacobi theta function $\theta_3$ at zero.  Its imaginary
transformation, equivalently Poisson summation for a Gaussian, gives
\begin{equation}\label{eq:theta-modular}
 \vartheta(e^{-s})
 =\sqrt{\frac{\pi}{s}}\,\vartheta(e^{-\pi^2/s}),
 \qquad s>0;
\end{equation}
see \cite[Eq.~(20.7.32)]{DLMF}.  In particular,
\begin{equation}\label{eq:theta-modular-lower}
 \vartheta(e^{-s})\ge \sqrt{\frac{\pi}{s}},\qquad s>0,
\end{equation}
by retaining the zero term in the transformed theta series.

For $0<a,b<1$, define
\begin{equation}\label{eq:G-series}
 G(a,b):=\sum_{j=0}^{\infty}
 (ab)^{j^2}(1-a^{2j+1})(1-b^{2j+1}).
\end{equation}
Absolute convergence follows by comparison with $\vartheta(ab)$.

A second kernel describes all Gaussian Hardy denominators in arbitrary
dimension:
\begin{equation}\label{eq:FN-series}
 F_N(x):=\sum_{n\in\Z^N\setminus\{0\}}
          \frac{x^{|n|^2}}{|n|^2},\qquad 0<x<1.
\end{equation}

\begin{lemma}\label{lem:FN-kernel}
For every $N\ge1$ and $0<x<1$,
\begin{align}
 F_N(x)
 &=\int_{-\log x}^{\infty}
      \bigl(\vartheta(e^{-s})^N-1\bigr)\,\dd s,
 \label{eq:FN-large-integral}\\
 &=\int_0^x\frac{\vartheta(t)^N-1}{t}\,\dd t.
 \label{eq:FN-small-integral}
\end{align}
Moreover,
\begin{equation}\label{eq:DN-gaussian-kernel}
 \cD_N(g_{a,N},g_{b,N})=F_N(ab),\qquad 0<a,b<1.
\end{equation}
\end{lemma}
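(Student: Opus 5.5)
The plan is to write $1/|n|^2$ as an integral of exponentials and then exchange sums and integrals by Tonelli's theorem. Everything involved is nonnegative, so the exchanges are free.

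\emph{Step 1 (the $s$-integral).} For $n\ne0$ we have the elementary identity
\[
 \frac{x^{|n|^2}}{|n|^2}=\int_{-\log x}^\infty e^{-s|n|^2}\,\dd s .
\]
To check it, note that with $L=-\log x>0$ the right side equals $e^{-L|n|^2}/|n|^2$. Summing over $n\in\Z^N\setminus\{0\}$ and applying Tonelli gives
\[
 F_N(x)=\int_{-\log x}^\infty\sum_{n\ne0}e^{-s|n|^2}\,\dd s .
\]
The product structure of the Gaussian then yields
\[
 \sum_{n\in\Z^N}e^{-s|n|^2}=\bigl(\sum_{k\in\Z}e^{-sk^2}\bigr)^N=\vartheta(e^{-s})^N .
\]
Removing the term $n=0$ gives $\vartheta(e^{-s})^N-1$, which proves \eqref{eq:FN-large-integral}.

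Finiteness of both sides is worth one line. As $s\to\infty$ we have $\vartheta(e^{-s})^N-1=O(e^{-s})$, because $0\le\vartheta^N-1\le N\vartheta^{N-1}(\vartheta-1)$ and $\vartheta(e^{-s})-1\le 2e^{-s}/(1-e^{-s})$. The integral therefore converges at infinity. Tonelli in any case makes the identity valid in $[0,\infty]$, and the left side $F_N(x)$ is finite since $F_N(x)\le\vartheta(x)^N$.

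\emph{Step 2 (the $t$-integral).} Substitute $t=e^{-s}$, so that $\dd s=-\dd t/t$. The range $s\in[-\log x,\infty)$ maps to $t\in(0,x]$, and this gives \eqref{eq:FN-small-integral} directly. Integrability near $t=0$ follows from $\vartheta(t)^N-1=O(t)$.

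\emph{Step 3 (the denominator pairing).} The functions $g_{a,N}$ and $g_{b,N}$ vanish at the origin. By the definition of the polarization $\cD_N$ we get
\[
 \cD_N(g_{a,N},g_{b,N})=\sum_{n\ne0}\frac{a^{|n|^2}b^{|n|^2}}{|n|^2}=F_N(ab),
\]
with $0<ab<1$. Lemma~\ref{lem:form-domain} guarantees that the sum is absolutely convergent.

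No step is a real obstacle. The only point needing care is justifying the interchange of sum and integral, and Tonelli handles it because all terms are positive. The remaining work is checking the convergence of the integral at each endpoint, which is routine from the bound $\vartheta-1=O(\text{argument})$.
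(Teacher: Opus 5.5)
Your proposal is correct and follows the paper's proof essentially verbatim: represent $x^{|n|^2}/|n|^2$ as $\int_{-\log x}^\infty e^{-s|n|^2}\,\dd s$, interchange by Tonelli, factor the lattice Gaussian sum as $\vartheta(e^{-s})^N$, substitute $t=e^{-s}$, and read off $\cD_N(g_{a,N},g_{b,N})=F_N(ab)$ directly from the definition. Your extra finiteness remarks are accurate. In particular $F_N(x)\le\vartheta(x)^N-1$ because $|n|^2\ge1$, and the $O(e^{-s})$ tail bound holds. They are harmless additions to what the paper writes.
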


\begin{proof}
All summands are nonnegative, so Tonelli's theorem applies.  Since
\[
 \int_{-\log x}^{\infty}e^{-s|n|^2}\,\dd s
 =\frac{x^{|n|^2}}{|n|^2},
\]
summing first over $n\ne0$ gives \eqref{eq:FN-large-integral} because
\[
 \sum_{n\in\Z^N}e^{-s|n|^2}=\vartheta(e^{-s})^N.
\]
The substitution $t=e^{-s}$ gives \eqref{eq:FN-small-integral}.  Finally,
\[
 \cD_N(g_{a,N},g_{b,N})
 =\sum_{n\ne0}\frac{(ab)^{|n|^2}}{|n|^2}=F_N(ab).
\]
\end{proof}

A useful consequence of \eqref{eq:FN-small-integral}, valid in every
dimension, is obtained from the elementary theta bound
$\vartheta(t)\ge1+2t+2t^4$:
\begin{equation}\label{eq:FN-polynomial-general}
 F_N(x)\ge
 \int_0^x\frac{(1+2t+2t^4)^N-1}{t}\,\dd t,
 \qquad 0<x<1.
\end{equation}
The right-hand side is an explicit polynomial with positive coefficients.

The energy admits an equally explicit tensor factorization.

\begin{lemma}\label{lem:theta-energy}
For $N\ge1$ and $0<a,b<1$,
\begin{equation}\label{eq:theta-energy-general}
 \cE_N(g_{a,N},g_{b,N})
 =2N\,G(a,b)\vartheta(ab)^{N-1}+2N(a+b-1).
\end{equation}
\end{lemma}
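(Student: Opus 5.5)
The plan is to separate the effect of the origin from the Gaussian itself. I write $g_{a,N}=\Gamma_a-\delta_0$, where $\Gamma_a(n):=a^{|n|^2}$ on all of $\Z^N$ (so $\Gamma_a(0)=1$) and $\delta_0$ is the indicator of the origin. All functions involved lie in $\ell^2(\Z^N)$ by Lemma~\ref{lem:form-domain}, and the bilinear form $\cE_N(\cdot,\cdot)$ is bounded on $\ell^2$ by \eqref{eq:bounded-forms}. So $\cE_N(\cdot,\cdot)$ extends to $\ell^2(\Z^N)$, and all series below converge absolutely. Bilinearity then gives
\[
 \cE_N(g_{a,N},g_{b,N})=\cE_N(\Gamma_a,\Gamma_b)-\cE_N(\Gamma_a,\delta_0)-\cE_N(\delta_0,\Gamma_b)+\cE_N(\delta_0,\delta_0).
\]

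\emph{The origin terms.} Directly, $\cE_N(\delta_0,\delta_0)=2N$, since exactly the $2N$ edges at the origin contribute. Also $\cE_N(f,\delta_0)=(-\lap f)(0)$ for any $f\in\ell^2$; this is summation by parts, which is immediate here because only the $2N$ edges at the origin contribute. Every neighbour $m$ of the origin has $|m|^2=1$, so $(-\lap\Gamma_a)(0)=2N(1-a)$. The three origin terms therefore sum to
\[
 2N-2N(1-a)-2N(1-b)=2N(a+b-1),
\]
which is the second summand in \eqref{eq:theta-energy-general}.

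\emph{The Gaussian term by tensor factorization.} Since $\Gamma_a(n)=\prod_i a^{n_i^2}$, the $j$-th directional sum factorizes. Each coordinate $i\neq j$ contributes $\sum_{k\in\Z}(ab)^{k^2}=\vartheta(ab)$. The $j$-th coordinate contributes the one-dimensional sum
\[
 S(a,b):=\sum_{k\in\Z}\bigl(a^{(k+1)^2}-a^{k^2}\bigr)\bigl(b^{(k+1)^2}-b^{k^2}\bigr).
\]
Summing over the $N$ directions gives $\cE_N(\Gamma_a,\Gamma_b)=N\,S(a,b)\,\vartheta(ab)^{N-1}$, so it remains to show $S(a,b)=2G(a,b)$.

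For this, note that the involution $k\mapsto-1-k$ swaps $k^2$ and $(k+1)^2$. It therefore changes the sign of both factors and leaves each summand unchanged, so $S=2\sum_{k\ge0}$. For $k\ge0$, factoring out $a^{k^2}b^{k^2}$ gives
\[
 \bigl(a^{k^2}-a^{(k+1)^2}\bigr)\bigl(b^{k^2}-b^{(k+1)^2}\bigr)=(ab)^{k^2}(1-a^{2k+1})(1-b^{2k+1}),
\]
which is exactly the summand of \eqref{eq:G-series}. Combining the two parts yields \eqref{eq:theta-energy-general}.

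The only delicate point is the bookkeeping at the origin: the sign of the correction terms, and justifying bilinear manipulation for non-finitely supported functions. The latter is handled by the $\ell^2$-boundedness from Lemma~\ref{lem:form-domain}.
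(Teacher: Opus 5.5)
Your proof is correct and follows essentially the same route as the paper: tensor factorization of the full-lattice Gaussians, the pairing $k\leftrightarrow -1-k$ to obtain $S(a,b)=2G(a,b)$, and a correction coming from the $2N$ edges at the origin. The only difference is bookkeeping: you obtain the correction $2N(a+b-1)$ by expanding $g_{a,N}=\Gamma_a-\delta_0$ bilinearly and using $\cE_N(f,\delta_0)=(-\lap f)(0)$, whereas the paper compares the per-edge contributions $ab$ and $(a-1)(b-1)$ directly.
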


\begin{proof}
First consider the full-lattice Gaussians
$\widetilde g_{a,N}(n)=a^{|n|^2}$ and
$\widetilde g_{b,N}(n)=b^{|n|^2}$, whose values at the origin are one.
Fix a coordinate $\ell$.  Writing $n=(n_1,\ldots,n_N)$,
\[
 \widetilde g_{a,N}(n+e_\ell)-\widetilde g_{a,N}(n)
 =a^{\sum_{r\ne\ell}n_r^2}
   \bigl(a^{(n_\ell+1)^2}-a^{n_\ell^2}\bigr),
\]
and analogously for $b$.  Absolute convergence and Fubini's theorem therefore
give
\begin{align*}
 &\sum_{n\in\Z^N}
 \bigl(\widetilde g_{a,N}(n+e_\ell)-\widetilde g_{a,N}(n)\bigr)
 \bigl(\widetilde g_{b,N}(n+e_\ell)-\widetilde g_{b,N}(n)\bigr)\\
 &\qquad=\vartheta(ab)^{N-1}S(a,b),
\end{align*}
where
\[
 S(a,b):=\sum_{k\in\Z}
 \bigl(a^{(k+1)^2}-a^{k^2}\bigr)
 \bigl(b^{(k+1)^2}-b^{k^2}\bigr).
\]
For $j\ge0$, the summand at $k=j$ equals
\[
 (ab)^{j^2}(1-a^{2j+1})(1-b^{2j+1}),
\]
and the summand at $k=-j-1$ has the same value.  Hence
\[
 S(a,b)=2G(a,b).
\]
Summing over the $N$ coordinate directions yields
\begin{equation}\label{eq:full-gaussian-energy}
 \cE_N(\widetilde g_{a,N},\widetilde g_{b,N})
 =2N\,G(a,b)\vartheta(ab)^{N-1}.
\end{equation}

We now pass from the full-lattice Gaussians to the admissible profiles
$g_{a,N},g_{b,N}$, which vanish at the origin by definition.  Exactly $2N$
unordered nearest-neighbour edges are incident with zero.  On each such edge
the bilinear contribution for the full-lattice Gaussians is
$(a-1)(b-1)$, whereas for $g_{a,N},g_{b,N}$ it is $ab$.  The correction per
edge is therefore $a+b-1$, and all other edges are unchanged.  Adding these
$2N$ corrections to \eqref{eq:full-gaussian-energy} proves
\eqref{eq:theta-energy-general}.
\end{proof}

For the remainder of Section~4 we specialize to $N=9$ and abbreviate
\[
 g_q:=g_{q,9},\qquad F:=F_9.
\]
Thus
\begin{equation}\label{eq:N9-F-series}
 F(x)=\sum_{n\in\Z^9\setminus\{0\}}\frac{x^{|n|^2}}{|n|^2},
\end{equation}
and \eqref{eq:theta-energy-general} becomes
\begin{equation}\label{eq:theta-energy}
 \cE_9(g_a,g_b)=18G(a,b)\vartheta(ab)^8+18(a+b-1).
\end{equation}

\subsection{Gaussian Ritz spaces and the critical radial profile}

The link with Gaussian functions is not specific to dimension nine.  For
$N\ge3$, set
\[
 \alpha_N:=\frac{N-2}{2},\qquad \nu_N:=\frac{N-2}{4}.
\]
The formal continuum Hardy ground state satisfies
\begin{equation}\label{eq:continuum-ground-general}
 -\Delta_{\R^N}|x|^{-\alpha_N}
 =A_N|x|^{-2}|x|^{-\alpha_N}.
\end{equation}
In the squared radial variable $s=|x|^2$ this profile is $s^{-\nu_N}$, and
the Gamma integral gives
\begin{equation}\label{eq:gamma-gaussian-superposition-general}
 s^{-\nu_N}
 =\frac1{\Gamma(\nu_N)}
   \int_0^\infty t^{\nu_N-1}e^{-st}\,\dd t,
 \qquad s>0;
\end{equation}
see \cite[Eq.~(5.9.1)]{DLMF}.  Thus the radial Gaussians
$e^{-t|n|^2}=q^{|n|^2}$, $q=e^{-t}$, are the Laplace building blocks of the
continuum critical power in every dimension $N\ge3$.

The preceding kernels give a finite-dimensional variational reduction for
arbitrary Gaussian families.

\begin{proposition}\label{prop:gaussian-Ritz-general}
Let $N\ge1$, let $m\ge1$, and let $q_1,\ldots,q_m\in(0,1)$ be distinct.
Set
\[
 V_N(q):=\operatorname{span}\{g_{q_1,N},\ldots,g_{q_m,N}\}.
\]
Define the symmetric matrices
\begin{align}
 \mathsf E_N(q)_{ij}
 &=2N\,G(q_i,q_j)\vartheta(q_iq_j)^{N-1}
   +2N(q_i+q_j-1),\label{eq:general-E-matrix}\\
 \mathsf D_N(q)_{ij}
 &=F_N(q_iq_j).\label{eq:general-D-matrix}
\end{align}
Then $\mathsf D_N(q)$ is positive definite and
\begin{equation}\label{eq:general-Ritz-value}
 \min_{0\ne v\in V_N(q)}\frac{\cE_N[v]}{\cD_N[v]}
 =\min_{c\ne0}\frac{c^T\mathsf E_N(q)c}{c^T\mathsf D_N(q)c}.
\end{equation}
Equivalently, this minimum is the smallest generalized eigenvalue of
\begin{equation}\label{eq:generalized-eigenproblem-general}
 \mathsf E_N(q)c=\lambda\mathsf D_N(q)c.
\end{equation}
\end{proposition}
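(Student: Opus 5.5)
The plan is to reduce the Rayleigh quotient on $V_N(q)$ to coordinates by bilinearity, and then obtain positive definiteness of $\mathsf D_N(q)$ from linear independence of the Gaussians together with the fact that the weight $|n|^{-2}$ is strictly positive on $X_N$.

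First, the matrix entries. Write $v=\sum_i c_i g_{q_i,N}$ with real $c$. By Lemma~\ref{lem:form-domain}, $v\in\ell^2(X_N)$ and both forms are finite. Since the bilinear forms $\cE_N(\cdot,\cdot)$ and $\cD_N(\cdot,\cdot)$ are bounded on $\ell^2$ by \eqref{eq:bounded-forms} (via Cauchy--Schwarz), we can expand
\[
\cE_N[v]=\sum_{i,j}c_ic_j\cE_N(g_{q_i,N},g_{q_j,N}),\qquad \cD_N[v]=\sum_{i,j}c_ic_j\cD_N(g_{q_i,N},g_{q_j,N}).
\]
Lemma~\ref{lem:theta-energy} then identifies the energy entries with \eqref{eq:general-E-matrix}, and \eqref{eq:DN-gaussian-kernel} identifies the Hardy entries with $F_N(q_iq_j)$. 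Hence $\cE_N[v]=c^T\mathsf E_N(q)c$ and $\cD_N[v]=c^T\mathsf D_N(q)c$. If complex coefficients are to be allowed, note that both matrices are real symmetric, so the real and imaginary parts of $c$ decouple. The infimum over complex $c$ is therefore the same as over real $c$, because a ratio of sums is at least the minimum of the two ratios.

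Second, positive definiteness. We have $c^T\mathsf D_N(q)c=\sum_{n\ne0}|n|^{-2}v(n)^2\ge0$, with equality only if $v$ vanishes on $X_N$. Along the axis $n=(k,0,\dots,0)$ with $k\ge1$ this says $\sum_i c_i x_i^{\,k}=0$ for all $k\ge1$, where $x_i=q_i$ are distinct and lie in $(0,1)$. The Vandermonde determinant for $k=1,\dots,m$ equals $\prod_i x_i\prod_{i<j}(x_j-x_i)\ne0$, which forces $c=0$. So $\mathsf D_N(q)$ is positive definite, and the map $c\mapsto v$ is injective; in particular $v\ne0$ if and only if $c\ne0$.

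Finally, \eqref{eq:general-Ritz-value} follows from this injectivity. The minimum is attained, since the quotient is a continuous function on the unit sphere in $\R^m$ (the denominator being positive there), and by the standard Rayleigh--Ritz theory for the symmetric pencil, using $\mathsf D_N(q)^{-1/2}$, it equals the smallest eigenvalue of \eqref{eq:generalized-eigenproblem-general}.

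The only delicate point is the justification of the bilinear expansion for functions that are not finitely supported. This is covered by the boundedness \eqref{eq:bounded-forms} and the absolute convergence already used in Lemmas~\ref{lem:FN-kernel} and~\ref{lem:theta-energy}. I do not expect a genuine obstacle here.
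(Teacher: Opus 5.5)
There is an error in the one non-routine step, the positive definiteness of $\mathsf D_N(q)$. On the axis $n=ke_1$ you have $|n|^2=k^2$, so $v(ke_1)=\sum_i c_iq_i^{k^2}$. The vanishing of $v$ there therefore gives $\sum_i c_iq_i^{k^2}=0$ for $k\ge1$, not $\sum_i c_iq_i^{k}=0$. The available exponents are $1,4,9,\dots$, so the ordinary Vandermonde determinant $\prod_i x_i\prod_{i<j}(x_j-x_i)$ for exponents $1,\dots,m$ is not the relevant matrix.

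Leaving the axis does not rescue the consecutive-exponent argument in general. For $N=1$ the axis is all of $X_1$, so only square exponents occur. For $N=2,3$ not every positive integer is a value of $|n|^2$ (e.g.\ $3$, resp.\ $7$). Only for $N\ge4$ would the four-square theorem supply every $k$ and make your Vandermonde computation valid.

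The conclusion is still true and easy to secure. The paper orders $0<q_1<\cdots<q_m<1$, divides $\sum_i c_iq_i^{k^2}=0$ by $q_m^{k^2}$, and lets $k\to\infty$ to get $c_m=0$, then iterates downward. Alternatively, the generalized Vandermonde matrix $\bigl(q_i^{k^2}\bigr)_{i,k=1}^m$ is nonsingular. Indeed, with distinct $t_i=-\log q_i$, a nonzero exponential sum $y\mapsto\sum_i c_ie^{-t_iy}$ has at most $m-1$ real zeros: multiply by $e^{t_1y}$, differentiate, and induct via Rolle's theorem. So vanishing at $y=1,4,\dots,m^2$ forces $c=0$.

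With either fix, the rest of your proposal matches the paper's proof. That includes the bilinear expansion justified by \eqref{eq:bounded-forms}, the identification of the entries through Lemma~\ref{lem:theta-energy} and \eqref{eq:DN-gaussian-kernel}, injectivity of $c\mapsto v$, and Rayleigh--Ritz for the symmetric pencil.
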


\begin{proof}
Lemma~\ref{lem:theta-energy} and \eqref{eq:DN-gaussian-kernel} give the two
Gram matrices in \eqref{eq:general-E-matrix}--\eqref{eq:general-D-matrix}.
The functions $g_{q_i,N}$ are linearly independent.  Indeed, after ordering
$0<q_1<\cdots<q_m<1$, restrict a linear relation to the axis $ke_1$, divide
by $q_m^{k^2}$, and let $k\to\infty$ to obtain the last coefficient equal to
zero; iteration gives all coefficients zero.  Since $\cD_N$ is a positive
inner product on nonzero functions on $X_N$, $\mathsf D_N(q)$ is positive
definite.  Formula \eqref{eq:general-Ritz-value} is then the usual
Rayleigh--Ritz principle, and \eqref{eq:generalized-eigenproblem-general} is
its generalized eigenvalue formulation.
\end{proof}

We now take $N=9$ and $m=3$.  In this case $\nu_9=7/4$ and
$A_9=49/4$.  The Laplace integral
\eqref{eq:gamma-gaussian-superposition-general} is naturally sampled on a
logarithmic $t$-scale: separated values of $t$ resolve different radial
ranges while preserving the same Gaussian tensor structure.  We use three
such scales and choose rational $q=e^{-t}$ values so that all subsequent
estimates can be carried out exactly:
\begin{equation}\label{eq:N9-scales}
 q_1=\frac7{25},\qquad q_2=\frac{18}{25},\qquad q_3=\frac{23}{25}.
\end{equation}
They lie close to a geometric logarithmic mesh:
$q_2\approx q_3^4$ and $q_1\approx q_2^4$, equivalently
$t_2\approx4t_3$ and $t_1\approx4t_2$ for $t_i=-\log q_i$.  The particular rational values
are used only to make the verification below exact; the analytic role of the
three basis functions is to probe three separated radial scales in the
Laplace representation.

A reproducible way to arrive at such a trial is the following.  One first
samples the Gamma--Laplace representation \eqref{eq:gamma-gaussian-superposition-general}
on a short geometric mesh in the variable $t$, so that the basis resolves
inner, intermediate and outer radial scales.  For each tentative mesh one
forms the two Gram matrices in Proposition~\ref{prop:gaussian-Ritz-general}
and computes their smallest generalized eigenvalue.  Once a mesh gives a
value below $A_9$, the scales are moved to nearby rational values and the
corresponding minimizing eigenvector is replaced by a simple nearby rational
direction.  The last step is not numerical: the chosen rational mesh and
vector are then certified from scratch by one-sided theta estimates.  The
values in \eqref{eq:N9-scales} and \eqref{eq:N9-parameters} are one convenient
outcome of this strategy, with a comfortable strict gap and particularly
simple exact arithmetic.

Inside the corresponding Ritz space the coefficients are variational
parameters.  To prove a strict upper bound for $C(9)$ it suffices to exhibit
one direction with quotient below $A_9$.  We take the simple positive rational
direction
\begin{equation}\label{eq:N9-parameters}
 (c_1,c_2,c_3)=(256,16,1)=(16^2,16,1).
\end{equation}
Define
\begin{equation}\label{eq:phi-m}
 \phi_m=\sum_{i=1}^3c_iq_i^m\quad(m\ge0),\qquad
 u_9(0)=0,\quad u_9(n)=\phi_{|n|^2}\quad(n\ne0).
\end{equation}
Thus $u_9=\sum_i c_i g_{q_i}$ belongs to the form domain and is approximated
by finitely supported functions by Lemma~\ref{lem:form-domain}.  The next two
subsections prove directly that its Rayleigh quotient is strictly below
$A_9$.

\subsection{An exact upper bound for the energy}

We first record a geometric estimate for the tails of both positive series.
For integers $K,J\ge0$, set
\begin{align}
 \Theta_K(x)&:=1+2\sum_{k=1}^K x^{k^2}
       +\frac{2x^{(K+1)^2}}{1-x^{2K+3}},\label{eq:ThetaK}\\
 G_J(a,b)&:=\sum_{j=0}^J
 (ab)^{j^2}(1-a^{2j+1})(1-b^{2j+1})
       +\frac{(ab)^{(J+1)^2}}{1-(ab)^{2J+3}}.\label{eq:GJ}
\end{align}
These expressions are rational when their arguments are rational.

The tail of $G$ is estimated after dropping the two factors bounded by one.
Its retained terms keep the difference structure of the energy, so this
estimate does not subtract two separately approximated infinite series.

\begin{lemma}\label{lem:theta-majorant}
For $0<x,a,b<1$ and integers $K,J\ge0$,
\[
 0<\vartheta(x)\le\Theta_K(x),\qquad
 0<G(a,b)\le G_J(a,b).
\]
\end{lemma}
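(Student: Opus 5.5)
Both bounds follow the same pattern: keep the first finitely many terms of each positive series exactly, and dominate the tail by a geometric series. Positivity is immediate. Every term of $\vartheta(x)=1+2\sum_{k\ge1}x^{k^2}$ is positive. Every term of $G(a,b)$ is a product of the positive factors $(ab)^{j^2}$, $1-a^{2j+1}$ and $1-b^{2j+1}$, since $0<a,b<1$. Absolute convergence, already noted after \eqref{eq:G-series}, allows the series to be split at any index.

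For the theta majorant, I write $\vartheta(x)=1+2\sum_{k=1}^K x^{k^2}+2\sum_{k\ge K+1}x^{k^2}$. The tail is controlled through a gap estimate. For $k\ge K+1$ one has $(k+1)^2-k^2=2k+1\ge2K+3$. By induction, $x^{(K+1+i)^2}\le x^{(K+1)^2}\,x^{i(2K+3)}$ for every $i\ge0$, since each step multiplies by $x^{2k+1}\le x^{2K+3}$ when $0<x<1$. Summing the geometric series gives
\[
\sum_{k\ge K+1}x^{k^2}\le\frac{x^{(K+1)^2}}{1-x^{2K+3}}.
\]
This yields $\vartheta(x)\le\Theta_K(x)$ as defined in \eqref{eq:ThetaK}.

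For $G$, I split at $j=J$ and keep the terms $j\le J$ exactly. In the tail I use $0<1-a^{2j+1}<1$ and $0<1-b^{2j+1}<1$ to bound each term by $(ab)^{j^2}$. This is the ``dropping the two factors bounded by one'' mentioned before the lemma. The same gap argument with $x=ab\in(0,1)$ then gives
\[
\sum_{j\ge J+1}(ab)^{j^2}\le\frac{(ab)^{(J+1)^2}}{1-(ab)^{2J+3}}.
\]
The difference here is that there is no factor $2$, because $G$ is a one-sided sum over $j\ge0$. Adding this to the retained part gives $G(a,b)\le G_J(a,b)$.

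No step is a genuine obstacle. The only point needing care is the index bookkeeping in the gap inequality: the ratio $x^{(k+1)^2}/x^{k^2}=x^{2k+1}$ must be bounded by $x^{2K+3}$ uniformly for $k\ge K+1$. This uses that $x^{m}$ is decreasing in $m$ for $0<x<1$. The lemma's inequalities are non-strict, so it does not matter that these bounds are in fact strict.
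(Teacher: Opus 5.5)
Your proof is correct and follows essentially the same route as the paper's. The paper gets the exponent bound $(K+1+r)^2\ge(K+1)^2+(2K+3)r$ directly, from the difference $r(r-1)\ge0$, instead of telescoping consecutive gaps as you do. It then bounds the tail of $G$ exactly as you propose, by discarding the two factors lying in $(0,1)$ and reusing the tail estimate with $x=ab$ and $K=J$.
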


\begin{proof}
For every integer $r\ge0$,
\[
 (K+1+r)^2\ge (K+1)^2+(2K+3)r,
\]
because the difference is $r(r-1)\ge0$.  Consequently,
\[
 \sum_{k=K+1}^{\infty}x^{k^2}
 \le x^{(K+1)^2}\sum_{r=0}^{\infty}x^{(2K+3)r}
 =\frac{x^{(K+1)^2}}{1-x^{2K+3}}.
\]
This proves the theta bound.  In the tail of \eqref{eq:G-series},
use $0<(1-a^{2j+1})(1-b^{2j+1})<1$ and apply the same inequality
with $K$ replaced by $J$ and $x$ by $ab$.  Adding the retained terms
proves the second bound.  Positivity is immediate from the defining series.
\end{proof}

For $i\le j$, let $\omega_{ij}=1$ if $i=j$, and $\omega_{ij}=2$ otherwise.
Define the positive tensor contributions
\begin{equation}\label{eq:six-energy-terms}
 T_{ij}:=18\omega_{ij}c_ic_j
            G(q_i,q_j)\vartheta(q_iq_j)^8.
\end{equation}
Bilinearity and \eqref{eq:theta-energy} give
\begin{equation}\label{eq:six-energy-upper}
 \cE_9[u_9]=\sum_{1\le i\le j\le3}T_{ij}-\frac{12869766}{25}.
\end{equation}
Indeed, the origin correction is
\[
 18\sum_{i,j}c_ic_j(q_i+q_j-1)
 =18(2\phi_0\phi_1-\phi_0^2)=-\frac{12869766}{25},
 \qquad \phi_0=273,\quad\phi_1=\frac{2103}{25}.
\]

The next estimate uses the rational expressions \eqref{eq:ThetaK} and
\eqref{eq:GJ} at the six pairs.  The main text records the resulting six
one-sided bounds; the substitutions, eighth-power checks and all elementary
rational arithmetic are collected in Appendix~\ref{app:N9-energy-certificate}.

\begin{lemma}\label{lem:N9-energy}
The function in \eqref{eq:phi-m} satisfies
\begin{equation}\label{eq:N9-energy-bound}
 \cE_9[u_9]<4\,060\,000.
\end{equation}
\end{lemma}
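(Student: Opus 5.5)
The bound will follow from the exact decomposition \eqref{eq:six-energy-upper}: $\cE_9[u_9]=\sum_{i\le j}T_{ij}-12869766/25$. It therefore suffices to bound each of the six positive tensor terms $T_{ij}=18\omega_{ij}c_ic_jG(q_i,q_j)\vartheta(q_iq_j)^8$ from above by explicit rationals, and then check that their sum minus $12869766/25$ is below $4\,060\,000$. All factors are positive, so the monotonicity in Lemma~\ref{lem:theta-majorant} lets me replace $G(q_i,q_j)$ by $G_J(q_i,q_j)$ and $\vartheta(q_iq_j)$ by $\Theta_K(q_iq_j)$. Raising to the eighth power preserves the inequality, since everything is positive.

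The first step is to list the six products $x_{ij}=q_iq_j$ with $q=(7/25,18/25,23/25)$. For the pairs involving $q_1$ these are small ($49/625$, $126/625$, $161/625$), so $K=J=1$ or $2$ already gives tails that are negligible. The delicate case is the pair $(3,3)$, where $x_{33}=529/625\approx0.846$. There $\vartheta(x)$ is about $\sqrt{\pi/0.167}\approx4.3$, and its eighth power is a few times $10^5$, so the truncation must be taken deeper, say $K=J$ of order $6$ to $8$. The plan is to choose $K,J$ per pair just large enough that the geometric tail $2x^{(K+1)^2}/(1-x^{2K+3})$ contributes a relative error small compared with the available slack.

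Next, for each pair I would replace the rational values $\Theta_K(x_{ij})$ and $G_J(q_i,q_j)$ by slightly larger simple rationals (upper roundings). The eighth power would be certified by squaring three times with upward rounding at each stage, which keeps every step a finite rational comparison suitable for the appendix. Multiplying by the weights $18\omega_{ij}c_ic_j$ (note $c_1^2=65536$, so $T_{11}$ carries a large weight, though $\vartheta(49/625)$ is close to $1$ and its bound is tight) gives six rational upper bounds $\overline T_{ij}$. The final step is to check $\sum\overline T_{ij}<4\,060\,000+12869766/25$.

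The main obstacle is budgeting the slack. The true energy is presumably only modestly below $4\,060\,000$, because this bound must later combine with a lower bound on $\cD_9[u_9]$ to give a quotient below $49/4$. So the heavy terms $T_{11}$, $T_{12}$ and $T_{33}$ need tight roundings. I would first compute all six terms numerically to locate the margin, then allocate the rounding tolerances in proportion to each term's size. I would increase $K,J$ for the $(3,3)$ pair, and for the $(2,3)$ pair with $x=414/625$, until the margin is comfortably positive, recording the resulting rational data in Appendix~\ref{app:N9-energy-certificate}.
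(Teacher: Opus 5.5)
Your proposal is correct and follows the paper's proof. Both use the exact decomposition \eqref{eq:six-energy-upper}, bound each positive term $T_{ij}$ by the majorants $\Theta_K$ and $G_J$ of Lemma~\ref{lem:theta-majorant}, round upward to simple rationals, check eighth powers by three squarings, and compare the total after subtracting $12869766/25$. The paper's certificate uses shallower truncations than you anticipate ($K=4$, $J=6$ at the pair $(3,3)$, and $K,J\le 4$ elsewhere), and its final value $101\,487\,734/25\approx 4\,059\,509$ confirms that the roundings must be budgeted carefully.
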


\begin{proof}
Lemma~\ref{lem:theta-majorant} turns each of the six positive tensor
contributions in \eqref{eq:six-energy-terms} into a finite rational upper
bound.  With the truncation orders specified in
Appendix~\ref{app:N9-energy-certificate}, the exact comparisons give
\begin{align*}
 T_{11}&<2\,248\,800,& T_{12}&<741\,700,& T_{13}&<31\,100,\\
 T_{22}&<800\,000,& T_{23}&<348\,300,& T_{33}&<404\,400.
\end{align*}
The appendix contains the substitutions in $\Theta_K$ and $G_J$ and the
integer comparisons verifying every rounding.  Using the exact origin
correction in \eqref{eq:six-energy-upper},
\begin{align*}
 \cE_9[u_9]
 &<2\,248\,800+741\,700+31\,100+800\,000+348\,300+404\,400
       -\frac{12869766}{25}\\
 &=\frac{101\,487\,734}{25}<4\,060\,000.
\end{align*}
\end{proof}

\subsection{An analytic lower bound for the denominator}

The denominator admits an integral representation in which the modular
identity for the theta function provides a useful lower bound.  This
representation separates the three products of small Gaussian parameters
from the three larger products.  A fixed polynomial handles the former;
the integral estimate handles the latter, including their complete tails.

For $N=9$, Lemma~\ref{lem:FN-kernel} gives the two exact representations
\begin{align}
 F(x)&=\int_{-\log x}^{\infty}
            \bigl(\vartheta(e^{-s})^9-1\bigr)\,\dd s,
 \label{eq:N9-F-integral}\\
 F(x)&=\int_0^x\frac{\vartheta(t)^9-1}{t}\,\dd t,
 \label{eq:N9-F-small-integral}
\end{align}
valid for $0<x<1$.  For small $x$, the general minorant \eqref{eq:FN-polynomial-general},
specialized to $N=9$, gives an explicit positive polynomial.  Retaining its
first eight terms and rounding two coefficients downward gives
\begin{equation}\label{eq:N9-p-polynomial}
 p(x):=18x+72x^2+224x^3+508x^4
       +864x^5+1232x^6+1810x^7+2826x^8.
\end{equation}
At the larger products of Gaussian parameters, the modular estimate below is
stronger than this low-order Taylor minorant.

\begin{lemma}\label{lem:N9-denominator}
The function in \eqref{eq:phi-m} satisfies
\begin{equation}\label{eq:N9-denominator-bound}
 \cD_9[u_9]>331\,800.
\end{equation}
\end{lemma}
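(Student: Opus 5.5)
Expanding the denominator bilinearly gives, by \eqref{eq:DN-gaussian-kernel},
\[
 \cD_9[u_9]=\sum_{i,j}c_ic_jF(q_iq_j)
 =\sum_{1\le i\le j\le3}\omega_{ij}c_ic_jF(q_iq_j).
\]
Every coefficient $\omega_{ij}c_ic_j$ is positive, so it suffices to bound each of the six values $F(q_iq_j)$ from below and add the results. The six products are
\[
 q_1^2=\tfrac{49}{625},\quad q_1q_2=\tfrac{126}{625},\quad q_1q_3=\tfrac{161}{625},
\]
which are small, and
\[
 q_2^2=\tfrac{324}{625},\quad q_2q_3=\tfrac{414}{625},\quad q_3^2=\tfrac{529}{625},
\]
which are larger. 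Their weights are $256^2$, $2\cdot4096$, $2\cdot256$ for the small products and $256$, $32$, $1$ for the larger ones. The dominant contribution is therefore $256^2F(49/625)$, and we will handle the two groups by different lower bounds, as the paragraph preceding the lemma suggests.

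\emph{Small products.} We use $F(x)\ge p(x)$ from \eqref{eq:N9-p-polynomial}. This follows from \eqref{eq:FN-polynomial-general}: the right-hand side there is a positive-coefficient polynomial, truncating it and rounding coefficients down keeps it a minorant on $(0,1)$. We will also check that the displayed coefficients $18,72,224,\dots$ do not exceed the true coefficients of $\int_0^x((1+2t+2t^4)^9-1)t^{-1}\,\dd t$. At $x=49/625$ the value of $p$ is essentially $18x+72x^2+\cdots\approx1.411+0.442+0.108+\cdots$. Each evaluation is an exact rational computation, which we round down.

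\emph{Larger products.} Here the polynomial is too weak, so we use \eqref{eq:N9-F-integral} together with a lower bound for $\vartheta(e^{-s})$. For large $s$ the series bound $\vartheta(e^{-s})\ge1+2e^{-s}+2e^{-4s}$ is adequate. For small $s$ we use the modular bound \eqref{eq:theta-modular-lower}, $\vartheta(e^{-s})\ge\sqrt{\pi/s}$. Splitting the integral at a point $s_0$ (say $s_0=1$), we get
\[
 F(x)\ge\int_{-\log x}^{s_0}\bigl((\pi/s)^{9/2}-1\bigr)\,\dd s
 +\int_{s_0}^{\infty}\bigl((1+2e^{-s}+2e^{-4s})^9-1\bigr)\,\dd s.
\]
The first integral is explicit, $\tfrac{2}{7}\pi^{9/2}\bigl(s^{-7/2}\bigr)\big|^{-\log x}_{s_0}$ minus a length term. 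At the crossover $s_0$, $\sqrt{\pi/s}$ should exceed the series minorant, so we will choose $s_0$ where $\max$ is optimal. The second integral equals the polynomial minorant evaluated at $e^{-s_0}$, that is, $\int_0^{e^{-s_0}}$ of the same integrand. To keep the argument rational, we will replace $-\log x$ by a rational upper bound (e.g.\ $-\log(529/625)<0.1671$) and $\pi$ by a rational lower bound, which only decreases the estimate. For $x=529/625$ the term $\pi^{9/2}s^{-7/2}$ with $s\approx0.167$ is large, roughly $10^5$–scale. This is expected, since $u_9$'s Hardy mass sits mostly at the outer scale; even with weight $1$, this term together with $256F(324/625)$ contributes substantially.

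Finally we sum the six bounds with their weights and compare the total with $331\,800$. The main obstacle is bookkeeping rather than ideas: the margin must survive every downward rounding, in particular the rational bounds on $\log$ and $\pi^{9/2}$ in the modular piece. We will first compute the bounds numerically to locate the slack, then fix rational truncations leaving visible room, deferring the integer comparisons to Appendix~\ref{app:N9-energy-certificate}. Combined with Lemma~\ref{lem:N9-energy}, this gives a quotient below $4\,060\,000/331\,800<12.24<49/4$.
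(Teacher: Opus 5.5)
Your proposal is correct and follows the paper's proof essentially step for step. You expand into the six weighted values $F(q_iq_j)$, use the polynomial minorant $p$ for the three small products, and for the three large ones split \eqref{eq:N9-F-integral} at $s=1$, with the Jacobi bound $\vartheta(e^{-s})\ge\sqrt{\pi/s}$ on $[-\log x,1]$ and the tail $F(e^{-1})>p(4/11)>46$, then rationalize $\pi$ and $-\log x$. Some small remarks: no crossover condition on $s_0$ is needed for validity; the rational certificate belongs in the denominator part of the appendix, not the energy part; and since the paper's rounded total is only $331\,898$, your roundings must lose less than about $100$ in total.
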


\begin{proof}
From \eqref{eq:N9-F-small-integral} and
$\vartheta(t)\ge1+2t+2t^4$,
\begin{align*}
 F(x)
 &\ge\int_0^x\frac{(1+2t+2t^4)^9-1}{t}\,\dd t\\
 &=18x+72x^2+224x^3+\frac{1017}{2}x^4
   +864x^5+1232x^6+\frac{12672}{7}x^7+2826x^8
   +P_{>8}(x),
\end{align*}
where $P_{>8}$ has nonnegative coefficients and no terms of degree at most
eight.  Comparing coefficients with the polynomial
\eqref{eq:N9-p-polynomial} gives
\begin{equation}\label{eq:N9-F-polynomial}
 F(x)>p(x)\qquad(0<x<1).
\end{equation}
This polynomial is used for the three smaller products
$q_1^2,q_1q_2,q_1q_3$.

For the three larger products, write $t=-\log x$.  Splitting
\eqref{eq:N9-F-integral} at $s=1$ and applying the Jacobi lower bound
\eqref{eq:theta-modular-lower} gives, for $0<t<1$,
\begin{equation}\label{eq:N9-F-Poisson}
 F(x)>45+t+\frac27\pi^{9/2}(t^{-7/2}-1).
\end{equation}
Thus the complete tails of the denominator kernel are controlled analytically,
not truncated.  To turn \eqref{eq:N9-F-polynomial} and
\eqref{eq:N9-F-Poisson} into a certificate involving rational arithmetic
only, use rational lower bounds for $\pi$, the identity
\[
 -\log x=2\sum_{k=0}^{\infty}\frac{y^{2k+1}}{2k+1},
 \qquad y=\frac{1-x}{1+x},
\]
and geometric domination of its tail.  The explicit substitutions are given
in Appendix~\ref{app:N9-denominator-certificate}.  They prove
\begin{align}
 256^2F(49/625)&>130000,\notag\\
 2\cdot256\cdot16\,F(126/625)&>78900,\notag\\
 2\cdot256\,F(161/625)&>8700,\label{eq:N9-small-product-summary}\\
 F(324/625)&>209,\notag\\
 F(414/625)&>1092,\notag\\
 F(529/625)&>25850.\notag
\end{align}
Expanding the square of the trial function and using Tonelli's theorem now
gives
\begin{align*}
 \cD_9[u_9]
 &=256^2F(49/625)+2\cdot256\cdot16\,F(126/625)
       +2\cdot256\,F(161/625)\\
 &\quad+16^2F(324/625)+32F(414/625)+F(529/625)\\
 &>130000+78900+8700+256\cdot209+32\cdot1092+25850\\
 &=331898>331800.
\end{align*}
\end{proof}

The resulting quotient estimate is strict.  Consequently, the
strict inequality survives the finite-support approximation established
in Lemma~\ref{lem:form-domain}.

The argument needs only one function whose quotient is smaller than $A_9$;
it does not identify the optimal Gaussian profile or the exact value of
$C(9)$.

\begin{proposition}\label{prop:N9}
One has $C(9)<49/4$.
\end{proposition}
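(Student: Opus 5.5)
The plan is to combine Lemma~\ref{lem:N9-energy} and Lemma~\ref{lem:N9-denominator} for the explicit trial function $u_9=\sum_i c_ig_{q_i}$ of \eqref{eq:phi-m}, and then to pass from this infinitely supported function to finitely supported admissible ones by Lemma~\ref{lem:form-domain}.

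First, the quotient estimate. The two lemmas give
\[
 \frac{\cE_9[u_9]}{\cD_9[u_9]}<\frac{4\,060\,000}{331\,800}.
\]
To compare with $A_9=49/4$, check the integer inequality $4\cdot4\,060\,000=16\,240\,000<49\cdot331\,800=16\,258\,200$. Hence the quotient of $u_9$ is at most some $\lambda_0<49/4$, and the gap is strict and explicit.

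Second, the approximation. Let $u_{9,R}:=u_9\ind_{\{\|n\|_\infty\le R\}}$. This function is finitely supported and vanishes at the origin, so it is admissible in \eqref{eq:def-CN}. By Lemma~\ref{lem:form-domain}, applied to the finite linear combination of Gaussians, we have $\cE_9[u_{9,R}]\to\cE_9[u_9]$ and $\cD_9[u_{9,R}]\to\cD_9[u_9]$. Since $\cD_9[u_9]>331\,800>0$, the quotients converge. So for $R$ large the quotient of $u_{9,R}$ is still below $49/4$, and therefore $C(9)<49/4$.

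All the substantive work lies in the two lemmas: the one-sided theta majorants for the energy and the modular and polynomial minorants for the denominator. Taking them as given, the only step requiring care is the strict-inequality bookkeeping. One must check that the rounded bounds leave a positive gap below $A_9$, which the integer comparison above confirms. One must also check that the limit passage preserves strictness, which holds because the limiting quotient is bounded strictly below $A_9$ rather than only up to equality.
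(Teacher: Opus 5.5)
Your proposal is correct and follows the paper's proof essentially step for step: you combine the energy and denominator lemmas, verify $16\,240\,000<16\,258\,200$, and pass to the cube truncations $u_{9,R}$ via Lemma~\ref{lem:form-domain}. The positivity of the limiting denominator keeps the quotients convergent and strictly below $49/4$ for large $R$.
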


\begin{proof}
By Lemmas~\ref{lem:N9-energy} and \ref{lem:N9-denominator},
\[
 \frac{\cE_9[u_9]}{\cD_9[u_9]}
 <\frac{4\,060\,000}{331\,800}<\frac{49}{4},
\]
because
\[
 4\cdot4\,060\,000=16\,240\,000
 <16\,258\,200=49\cdot331\,800.
\]
The final comparison has the positive integer gap $18\,200$.
The denominator is positive because $u_9$ is nonzero.
Lemma~\ref{lem:form-domain} shows that the cube truncations $u_{9,R}$ have
both quadratic forms converging to those of $u_9$.  Their denominators are
positive for $R\ge1$, so their quotients converge as well.
For all sufficiently large $R$, the quotient is strictly below $49/4$.
These truncations belong to the class in \eqref{eq:def-CN}, which proves
the assertion.
\end{proof}

\section{Orbit compressions and the coordinate Ritz ladder}\label{sec:high-dimensional-orbits}

Let $R$ be multiplication by $|n|$ on $\Z^N\setminus\{0\}$.  For finitely
supported $v$, define
\begin{equation}\label{eq:KN}
 (K_Nv)(n):=2N|n|^2v(n)-\sum_{m\sim n}|n|\,|m|v(m).
\end{equation}
If $u=Rv$, expansion of the edge squares gives
\begin{equation}\label{eq:conjugation}
 \cE_N[u]=\langle K_Nv,v\rangle_{\ell^2},
 \qquad
 \cD_N[u]=\norm{v}_2^2.
\end{equation}
Consequently, every finite-dimensional Rayleigh quotient of $K_N$ gives an
upper bound for $C(N)$.  The purpose of this section is not merely to produce
another family of trial functions.  The nested orbit compressions form a
Ritz ladder whose successive crossings of $A_N$ identify how many coordinate
shells are needed in dimensions twelve, eleven and ten; the fact that the
entire ladder stays above $A_9$ then isolates dimension nine as the first
dimension in which this coordinate mechanism provably cannot certify the
strict inequality.

The hyperoctahedral group
\[
 B_N=(\mathbb Z_2)^N\rtimes S_N
\]
acts on $\Z^N$ by independent sign changes and coordinate permutations and
preserves both the lattice graph and $|n|$.  Its orbits are indexed by the
coordinate multiplicities from \eqref{eq:coordinate-multiplicities}.  If an
orbit has type $a=(a_0,a_1,\ldots)$, then
\begin{equation}\label{eq:general-orbit-size}
 |\cO_a|=2^{N-a_0}\frac{N!}{\prod_{r\ge0}a_r!}.
\end{equation}
The orbit partition is equitable; see \cite[Section~9.3]{GodsilRoyle2001}.
If two orbits $\cO_a,\cO_b$ are joined with constant edge weight $w_{ab}$,
and a vertex in $\cO_a$ has $b_{ab}$ neighbours in $\cO_b$ while a vertex
in $\cO_b$ has $b_{ba}$ neighbours in $\cO_a$, then for the normalized orbit
indicators $\psi_a=|\cO_a|^{-1/2}\ind_{\cO_a}$,
\begin{equation}\label{eq:normalized-quotient-entry}
 \langle\psi_a,K_N\psi_b\rangle
 =w_{ab}\sqrt{b_{ab}b_{ba}}.
\end{equation}
This follows from $|\cO_a|b_{ab}=|\cO_b|b_{ba}$.

\subsection{The coordinate ladder}

We now select the nested sequence of coordinate orbits
\[
 \cO_{1^k}:=\{n\in\Z^N:n_j\in\{0,\pm1\},\ |n|^2=k\},
 \qquad 1\le k\le N,
\]
and write
\[
 \psi_k:=|\cO_{1^k}|^{-1/2}\ind_{\cO_{1^k}},
 \qquad
 V_k(N):=\operatorname{span}\{\psi_1,\ldots,\psi_k\}.
\]
Formula \eqref{eq:general-orbit-size} gives
\begin{equation}\label{eq:orbit-size}
 |\cO_{1^k}|=2^k\binom Nk.
\end{equation}
Moreover, the only adjacencies inside this chain are between successive
orbits, with transition numbers
\begin{equation}\label{eq:orbit-transition-numbers}
 b_{k,k+1}=2(N-k),\qquad b_{k+1,k}=k+1.
\end{equation}

\begin{lemma}\label{lem:orbit-matrix}
For $1\le k\le N$, the compression of $K_N$ to $V_k(N)$ is the symmetric
tridiagonal matrix $M_N^{(k)}$ with
\begin{equation}\label{eq:orbit-matrix}
 (M_N^{(k)})_{jj}=2Nj,
 \qquad
 (M_N^{(k)})_{j,j+1}=(M_N^{(k)})_{j+1,j}
 =-(j+1)\sqrt{2j(N-j)}
\end{equation}
for $1\le j<k$.
\end{lemma}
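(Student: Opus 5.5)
We compute the compression $\langle\psi_i,K_N\psi_j\rangle$ entrywise, using the definition \eqref{eq:KN} of $K_N$ and the equitable-partition formula \eqref{eq:normalized-quotient-entry}. Two facts do most of the work. First, on $\cO_{1^j}$ one has $|n|^2=j$, so the multiplication part $2N|n|^2$ acts on $\psi_j$ as the scalar $2Nj$. Second, the adjacency part of $K_N$ carries the weight $|n|\,|m|$, which is constant on edges joining two fixed orbits of the chain. Indeed, if $n\in\cO_{1^j}$ and $m\in\cO_{1^{j+1}}$, this weight equals $\sqrt{j(j+1)}$.

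The first step is to classify the neighbours of a point $n\in\cO_{1^j}$. Moving along $e_\ell$ either changes a zero coordinate to $\pm1$, or changes a coordinate $\pm1$ to $0$ or to $\pm2$. In the first case the neighbour lies in $\cO_{1^{j+1}}$, and there are $2(N-j)$ such neighbours (one for each zero coordinate and each sign). Changing $\pm1$ to $0$ lands in $\cO_{1^{j-1}}$ (or at the origin when $j=1$, where the function is absent from the chain). Changing $\pm1$ to $\pm2$ leaves the chain entirely. Consequently no vertex of $\cO_{1^j}$ has a neighbour in the same orbit, since $|m|^2=|n|^2\pm1$ forces a parity change. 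Likewise there are no adjacencies between $\cO_{1^i}$ and $\cO_{1^j}$ with $|i-j|\ge2$. Counting the downward moves from $\cO_{1^{j+1}}$ gives $j+1$, one for each nonzero coordinate. This verifies the transition numbers \eqref{eq:orbit-transition-numbers}.

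It follows that the diagonal entries are $\langle\psi_j,K_N\psi_j\rangle=2Nj\|\psi_j\|_2^2=2Nj$, with no adjacency contribution. The entries $(i,j)$ with $|i-j|\ge2$ vanish, because both indicator supports are disjoint and no edges join them. For the off-diagonal entries we would reprove \eqref{eq:normalized-quotient-entry} directly rather than cite it. Summing the adjacency term over $n\in\cO_{1^j}$ gives
\[
\langle\psi_j,K_N\psi_{j+1}\rangle
=-\sqrt{j(j+1)}\,|\cO_{1^j}|^{-1/2}|\cO_{1^{j+1}}|^{-1/2}\,|\cO_{1^j}|\,b_{j,j+1}.
\]
Using the double-counting identity $|\cO_{1^j}|b_{j,j+1}=|\cO_{1^{j+1}}|b_{j+1,j}$, which can also be checked from \eqref{eq:orbit-size}, this becomes $-\sqrt{j(j+1)}\sqrt{b_{j,j+1}b_{j+1,j}}$. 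Substituting the transition numbers yields $-\sqrt{j(j+1)}\sqrt{2(N-j)(j+1)}=-(j+1)\sqrt{2j(N-j)}$. Symmetry of $K_N$ gives the transposed entry.

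No step is a genuine obstacle. The only point needing care is the neighbour classification: it must confirm that the moves $\pm1\to\pm2$ and the moves to the origin produce no contributions inside $V_k(N)$. This holds because $\psi_i$ vanishes off $\cO_{1^i}$ and the compression only pairs functions supported in the chain.
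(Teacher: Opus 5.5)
Your proposal is correct and follows the paper's proof. The diagonal entries come from the multiplication part $2N|n|^2$, since there are no intra-orbit edges; the off-diagonal entries come from the constant edge weight $\sqrt{j(j+1)}$ combined with \eqref{eq:normalized-quotient-entry} and the transition numbers \eqref{eq:orbit-transition-numbers}; and nonconsecutive orbits are not adjacent. You also verify the transition numbers and the double-counting identity, which the paper simply quotes; the one small slip is writing $|m|^2=|n|^2\pm1$, whereas in general $|m|^2-|n|^2=1\pm2n_\ell$, but this is still odd, so your parity argument stands.
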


\begin{proof}
The diagonal term of $K_N$ is $2Nj$ on $\cO_{1^j}$.  Between
$\cO_{1^j}$ and $\cO_{1^{j+1}}$ the edge weight in \eqref{eq:KN} is
$-\sqrt{j(j+1)}$.  Combining \eqref{eq:normalized-quotient-entry} with
\eqref{eq:orbit-transition-numbers} gives
\[
 \langle\psi_j,K_N\psi_{j+1}\rangle
 =-\sqrt{j(j+1)}\sqrt{2(N-j)(j+1)}
 =-(j+1)\sqrt{2j(N-j)}.
\]
Nonconsecutive coordinate orbits are not adjacent, which proves the claim.
\end{proof}

The nested matrices in Lemma~\ref{lem:orbit-matrix} define a natural Ritz
ladder.

\begin{definition}\label{def:Lambda-k}
For $1\le k\le N$, set
\begin{equation}\label{eq:def-Lambda-k}
 \Lambda_k(N):=\lambda_{\min}\!\left(M_N^{(k)}\right).
\end{equation}
\end{definition}

\begin{proposition}\label{prop:Ritz-ladder}
For $N\ge3$ and $1\le k\le N$,
\begin{equation}\label{eq:Ritz-upper}
 C(N)\le \Lambda_k(N).
\end{equation}
For $1\le k<N$ the ladder is strictly decreasing:
\begin{equation}\label{eq:Ritz-strict-monotonicity}
 \Lambda_{k+1}(N)<\Lambda_k(N).
\end{equation}
If
\[
 P_k(\lambda;N):=\det\!\left(M_N^{(k)}-\lambda I\right),
 \qquad P_0(\lambda;N):=1,
\]
then
\begin{align}
 P_1(\lambda;N)&=2N-\lambda,\label{eq:P1}\\
 P_k(\lambda;N)
 &=(2Nk-\lambda)P_{k-1}(\lambda;N)
   -2k^2(k-1)(N-k+1)P_{k-2}(\lambda;N)
 \label{eq:Pk-recurrence}
\end{align}
for $2\le k\le N$.
\end{proposition}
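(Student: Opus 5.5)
The statement has three parts, and I would prove them in the order they are displayed.

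\emph{Upper bound \eqref{eq:Ritz-upper}.} Every $v\in V_k(N)$ is finitely supported, because the orbits $\cO_{1^j}$ are finite. Put $u=Rv$. Then $u$ is finitely supported and, extended by zero, satisfies $u(0)=0$. By \eqref{eq:conjugation},
\[
 \cE_N[u]/\cD_N[u]=\langle K_Nv,v\rangle/\norm v_2^2 .
\]
The vectors $\psi_1,\ldots,\psi_k$ are orthonormal, since they are normalized indicators of disjoint sets. Writing $v=\sum c_j\psi_j$, Lemma~\ref{lem:orbit-matrix} turns this quotient into $c^TM_N^{(k)}c/c^Tc$. Taking $c$ to be an eigenvector for the smallest eigenvalue gives an admissible $u$ with quotient $\Lambda_k(N)$, and the definition \eqref{eq:def-CN} then yields $C(N)\le\Lambda_k(N)$.

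\emph{Recurrence \eqref{eq:P1}--\eqref{eq:Pk-recurrence}.} This is the standard three-term expansion of a tridiagonal determinant along its last row. For $k=1$ the matrix is the scalar $2N$, which gives \eqref{eq:P1}. For $k\ge2$, cofactor expansion along row $k$ gives
\[
 P_k=(2Nk-\lambda)P_{k-1}-(M_{k-1,k})^2P_{k-2}.
\]
Here
\[
 (M_{k-1,k})^2=k^2\cdot2(k-1)(N-k+1)=2k^2(k-1)(N-k+1),
\]
which is \eqref{eq:Pk-recurrence}.

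\emph{Strict monotonicity \eqref{eq:Ritz-strict-monotonicity}.} Because $V_k\subset V_{k+1}$, min--max already gives $\Lambda_{k+1}\le\Lambda_k$. The hard part is strictness, and I would argue it by contradiction. Suppose $\Lambda_{k+1}=\Lambda_k=:\lambda$, and let $c\in\R^k$ be a minimizing eigenvector of $M^{(k)}$. Pad it to $\tilde c=(c,0)\in\R^{k+1}$. Then $\tilde c$ attains the minimum of the Rayleigh quotient of $M^{(k+1)}$, so it is an eigenvector of $M^{(k+1)}$. Reading off the last row of $M^{(k+1)}\tilde c=\lambda\tilde c$ gives
\[
 M_{k,k+1}c_k=0 .
\]
For $k<N$ we have $M_{k,k+1}=-(k+1)\sqrt{2k(N-k)}\neq0$, so $c_k=0$. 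The rows of $M^{(k)}c=\lambda c$ then propagate this downward: since all off-diagonal entries are nonzero, $c_k=0$ together with row $k$ forces $c_{k-1}=0$, and continuing in the same way forces $c=0$. This contradicts $c$ being an eigenvector, so the inequality is strict.

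Equivalently, strictness follows from Cauchy interlacing for irreducible Jacobi matrices, or from the recurrence: consecutive $P_{k-1}$ and $P_k$ cannot share a root, because a common root would propagate down to $P_0=1$.
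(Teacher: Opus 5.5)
Your proof is correct and follows the paper's argument essentially step for step: the variational bound via the conjugation identity and finite support of $V_k(N)$, the tridiagonal cofactor recurrence with squared off-diagonal entry $2k^2(k-1)(N-k+1)$, and strictness by padding a minimizing eigenvector with a zero and using the nonvanishing off-diagonal entries to force it to vanish. The only difference is the order of the strictness argument: the paper first shows the last coordinate of the eigenvector is nonzero and then reads the contradiction off the last row. Your alternative remark, that $P_k$ and $P_{k+1}$ cannot share a root, is also valid.
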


\begin{proof}
The variational estimate \eqref{eq:Ritz-upper} follows from
\eqref{eq:conjugation}, since every vector in $V_k(N)$ has finite support.
Because $M_N^{(k)}$ is the leading principal submatrix of
$M_N^{(k+1)}$, the Courant--Fischer min--max principle
\cite[Section~4.2]{HornJohnson2013} gives
$\Lambda_{k+1}(N)\le\Lambda_k(N)$.

To see that the inequality is strict, let $x$ be a normalized eigenvector of
$M_N^{(k)}$ for $\Lambda_k(N)$.  Its last coordinate cannot vanish: if
$x_k=0$, the last row of the tridiagonal eigenvalue equation and the nonzero
off-diagonal entries force successively $x_{k-1}=\cdots=x_1=0$.  If equality
held in the min--max inequality, $(x,0)$ would attain the lowest Rayleigh
quotient of $M_N^{(k+1)}$ and hence would be an eigenvector.  The last row of
that eigenvalue equation would then give
\[
 -(k+1)\sqrt{2k(N-k)}\,x_k=0,
\]
a contradiction.  This proves \eqref{eq:Ritz-strict-monotonicity}.

Finally, \eqref{eq:Pk-recurrence} is the standard determinant recurrence for
a tridiagonal matrix, since the square of the last off-diagonal entry is
$2k^2(k-1)(N-k+1)$.
\end{proof}

\subsection{The first three levels and their dimensional crossings}

The first three members of the ladder already describe the change from the
large-dimensional one-shell regime to the exceptional dimension nine.  The
third level remains explicit after the characteristic cubic is reduced to
depressed form.

\begin{proposition}\label{prop:coordinate-crossing}
For $N\ge3$, the first three Ritz levels are
\begin{equation}\label{eq:Lambda1-Lambda2}
 \Lambda_1(N)=2N,
 \qquad
 \Lambda_2(N)=3N-\sqrt{N^2+8N-8}.
\end{equation}
Set
\begin{equation}\label{eq:RN-phiN}
 R_N:=\sqrt{N^2+11N-20},
 \qquad
 \phi_N:=\frac13\arccos\!\left(
 \frac{3\sqrt3\,N(7N-16)}{2R_N^3}\right).
\end{equation}
Then
\begin{equation}\label{eq:Lambda3-explicit}
 \Lambda_3(N)
 =4N+\frac{4R_N}{\sqrt3}
 \cos\!\left(\phi_N+\frac{2\pi}{3}\right).
\end{equation}
Equivalently, $\Lambda_3(N)$ is the smallest zero of
\begin{equation}\label{eq:Lambda3-cubic}
 \lambda^3-12N\lambda^2
 +(44N^2-44N+80)\lambda
 -48N^3+120N^2-192N=0.
\end{equation}
Moreover,
\begin{equation}\label{eq:Lambda2-less-2N}
 \Lambda_2(N)<2N\qquad(N\ge3),
\end{equation}
and the first crossings of the continuum coefficient along this ladder are
\begin{align}
 \Lambda_1(N)&<A_N &&(N\ge12),\label{eq:Lambda1-crossing}\\
 \Lambda_2(10)&>A_{10},\qquad
 \Lambda_2(N)<A_N &&(N\ge11),\label{eq:Lambda2-crossing}\\
 \Lambda_3(10)&<A_{10}.\label{eq:Lambda3-crossing}
\end{align}
Consequently,
\begin{equation}\label{eq:Lambda2-main-range}
 C(N)\le\Lambda_2(N)<\min\{A_N,2N\}
 \qquad(N\ge11),
\end{equation}
and $C(10)<A_{10}$ follows from the third level.  Finally,
\begin{equation}\label{eq:Lambda3-asymptotic}
 \Lambda_3(N)
 =2N-4-\frac6N+\frac{51}{N^2}+O(N^{-3})
 \qquad(N\to\infty).
\end{equation}
\end{proposition}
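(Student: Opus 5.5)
The plan is to work directly with the tridiagonal matrices of Lemma~\ref{lem:orbit-matrix} and the recurrence \eqref{eq:Pk-recurrence}. For $k=1$ the matrix is the scalar $2N$, so $\Lambda_1(N)=2N$. For $k=2$ the recurrence gives
\[
P_2(\lambda;N)=(4N-\lambda)(2N-\lambda)-8(N-1),
\]
whose smaller root is $3N-\sqrt{N^2+8N-8}$; this is \eqref{eq:Lambda1-Lambda2}. For $k=3$ I would expand
\[
P_3=(6N-\lambda)P_2-36(N-2)(2N-\lambda),
\]
multiply by $-1$, and check that the coefficients match \eqref{eq:Lambda3-cubic}. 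The constant term, for instance, is $-(6N)(8N^2-8N+8)+72N(N-2)=-48N^3+120N^2-192N$. Substituting $\lambda=4N+\mu$ should give a depressed cubic $\mu^3-p\mu-q=0$ with $p=\tfrac43R_N^2$ and $q$ proportional to $N(7N-16)$; a direct computation will confirm the constants. Since $P_3$ is the characteristic polynomial of a real symmetric matrix, it has three real roots. Viète's trigonometric formula then lists the roots as $\tfrac{4R_N}{\sqrt3}\cos(\phi_N-2\pi j/3)$, and the branch $\phi_N+2\pi/3$ is the smallest because $\phi_N\in[0,\pi/3]$. This yields \eqref{eq:Lambda3-explicit}.

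The strict inequality \eqref{eq:Lambda2-less-2N} follows from Proposition~\ref{prop:Ritz-ladder}, which gives $\Lambda_2<\Lambda_1$; alternatively it is the elementary inequality $N<\sqrt{N^2+8N-8}$. For the crossings, the relation $2N<(N-2)^2/4$ is equivalent to $N^2-12N+4>0$, which holds exactly for $N\ge12$; this gives \eqref{eq:Lambda1-crossing}. For $\Lambda_2$, the condition $3N-A_N<\sqrt{N^2+8N-8}$ is automatic when $3N\le A_N$, that is, for $N\ge 15$. Otherwise both sides are positive, so I square and reduce to a quartic polynomial inequality in $N$, then check $N=11,\dots,14$ by hand. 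At $N=11$: $\Lambda_2=33-\sqrt{201}\approx18.82<20.25$. At $N=10$: $\Lambda_2=30-\sqrt{172}\approx16.89>16$, and I would certify this exactly via $14^2=196>172$. For $\Lambda_3(10)<16$, the key observation is that $P_3(\lambda)$, with leading sign $(-\lambda)^3$, is positive to the left of the smallest root. So it suffices to show that the cubic \eqref{eq:Lambda3-cubic} at $N=10$, namely $\lambda^3-120\lambda^2+4440\lambda-36880$, is positive at $\lambda=16$. Indeed $4096-30720+71040-36880=7536>0$, so the cubic changes sign and therefore has a root below $16$. Combined with Proposition~\ref{prop:Ritz-ladder}, this gives \eqref{eq:Lambda2-main-range} and $C(10)<A_{10}$.

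For the asymptotics \eqref{eq:Lambda3-asymptotic}, I would set $\lambda=2N+x$ with $x=O(1)$ in \eqref{eq:Lambda3-cubic}, expand in powers of $N$, and solve order by order: $x=x_0+x_1/N+x_2/N^2+O(N^{-3})$. At leading order, $N^3$ terms cancel, and I expect the balance $N^2$ to give $x_0=-4$. The next orders should yield $-6$ and $51$. The implicit function theorem, applied to the cubic rescaled by $N^{-2}$, justifies the expansion because the leading-order root is simple; I also need to confirm that this branch is the smallest root, since the others are near $4N$ and $6N$.

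The main obstacle is bookkeeping, especially the depressed-cubic constants in \eqref{eq:RN-phiN} and the second- and third-order coefficients in the asymptotic expansion. I would double-check both against the numerical value $\Lambda_3(10)\approx$ the root found above.
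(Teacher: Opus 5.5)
Your route is the paper's: the tridiagonal recurrence for $P_2,P_3$, the shift $\lambda=4N+y$ to a depressed cubic, Vi\`ete's trigonometric form with the branch $\phi_N+2\pi/3$, a sign evaluation of the characteristic polynomial at $\lambda=16$ for $N=10$, and an order-by-order expansion of the root near $2N$. The sign evaluation is equivalent to the paper's $\det(M_{10}^{(3)}-16I)=-96$. Your implicit-function justification of the expansion is more careful than the paper's formal substitution. It does require the check you mention: that the branch near $2N$ is the smallest root. Gershgorin discs of radius $O(\sqrt N)$ about $2N,4N,6N$ settle this. The logic is sound throughout, but three computations are wrong as written and must be corrected.

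\textbf{Depressed cubic.} The depressed cubic is $y^3-4R_N^2\,y-8N(7N-16)=0$. So the coefficient is $p=4R_N^2$, not $\tfrac43R_N^2$; the latter is $p/3$. Only with $p=4R_N^2$ does the amplitude $2\sqrt{p/3}=4R_N/\sqrt3$ match \eqref{eq:Lambda3-explicit}.

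\textbf{Case split for $\Lambda_2$.} The condition $3N\le A_N$ is equivalent to $N^2-16N+4\ge0$, i.e.\ $N\ge16$, not $N\ge15$. At $N=15$ one has $45>169/4$, so your case split leaves $N=15$ unchecked. The inequality is true there, and there is a cleaner route than either your case split or the paper's monotonicity of $d(x)$. For $N\ge12$, \eqref{eq:Lambda2-less-2N} and \eqref{eq:Lambda1-crossing} give $\Lambda_2(N)<2N<A_N$. Hence only $N=11$ needs a direct check, namely $(51/4)^2=2601/16<201$.

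\textbf{The $N=10$ certificate.} This is the most important error: your cubic at $N=10$ is wrong. Since $44\cdot100-440+80=4040$ and $-48000+12000-1920=-37920$, the cubic is $\lambda^3-120\lambda^2+4040\lambda-37920$. Its value at $16$ is $4096-30720+64640-37920=96$, not $7536$. The sign survives, so the conclusion $\Lambda_3(10)<16$ still holds. However, the true margin is only $96$ against individual terms up to about $6\cdot10^4$; indeed $\Lambda_3(10)\approx15.90$. A slip of this kind could easily have reversed the sign. Redo the certificate with the correct coefficients; it then reproduces exactly the paper's $\det(M_{10}^{(3)}-16I)=-96<0$.
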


\begin{proof}
Formula \eqref{eq:Lambda1-Lambda2} follows by diagonalizing the first two
matrices in Lemma~\ref{lem:orbit-matrix}.  The radicand in the second formula
is positive for $N\ge3$, and
$N^2+8N-8>N^2$ for $N>1$, which gives
\eqref{eq:Lambda2-less-2N}.

For the third level, expansion of
$\det(M_N^{(3)}-\lambda I)$ gives
\eqref{eq:Lambda3-cubic}.  With $\lambda=4N+y$, this becomes
\begin{equation}\label{eq:Lambda3-depressed}
 y^3-4(N^2+11N-20)y-8N(7N-16)=0.
\end{equation}
Since $M_N^{(3)}$ is real symmetric, the three zeros are real.  The
trigonometric solution of \eqref{eq:Lambda3-depressed} therefore gives
\eqref{eq:Lambda3-explicit}; for the angle in \eqref{eq:RN-phiN}, the shift
by $2\pi/3$ selects the smallest of the three zeros.

For the crossings, the first assertion is equivalent to
\[
 2N<\frac{(N-2)^2}{4}
 \quad\Longleftrightarrow\quad
 N^2-12N+4>0,
\]
whose roots are $6\pm4\sqrt2$.

For the second assertion, extend
$A_x=(x-2)^2/4$ and
$\Lambda_2(x)=3x-\sqrt{x^2+8x-8}$ to real $x\ge8$, and put
$d(x)=A_x-\Lambda_2(x)$.  Then
\[
 d'(x)=\frac{x-8}{2}
       +\frac{x+4}{\sqrt{x^2+8x-8}}>0
 \qquad(x\ge8).
\]
Moreover,
\[
 d(10)=\sqrt{172}-14<0,
 \qquad
 d(11)=\sqrt{201}-\frac{51}{4}>0.
\]
Hence the integer crossing occurs between $10$ and $11$, and
\eqref{eq:Lambda2-less-2N} yields \eqref{eq:Lambda2-main-range}.

For $N=10$,
\[
 M_{10}^{(3)}=
 \begin{pmatrix}
 20&-6\sqrt2&0\\
 -6\sqrt2&40&-12\sqrt2\\
 0&-12\sqrt2&60
 \end{pmatrix},
\]
and at $A_{10}=16$,
\begin{equation}\label{eq:N10-determinant}
 \det\!\left(M_{10}^{(3)}-16I\right)=-96<0.
\end{equation}
Thus $M_{10}^{(3)}-A_{10}I$ is not positive semidefinite, and hence
\eqref{eq:Lambda3-crossing} follows.

Finally, substitute
\[
 \lambda=2N-4+\frac{a}{N}+\frac{b}{N^2}+O(N^{-3})
\]
into \eqref{eq:Lambda3-cubic}.  Comparison of the coefficients of $N$ and
$N^0$ gives $a=-6$ and $b=51$, proving
\eqref{eq:Lambda3-asymptotic}.
\end{proof}

\subsection{Why the coordinate ladder does not reach dimension nine}

The preceding pattern might suggest that one more coordinate orbit should
settle the next dimension.  Dimension nine behaves differently: even the
complete coordinate ladder remains strictly above the continuum coefficient.
This can be proved directly from the determinant recurrence.

\begin{proposition}\label{prop:N9-coordinate-obstruction}
For $N=9$,
\begin{equation}\label{eq:N9-all-coordinate-above}
 \Lambda_k(9)>A_9=\frac{49}{4}
 \qquad(1\le k\le9).
\end{equation}
Equivalently,
\[
 M_9^{(9)}-\frac{49}{4}I
\]
is positive definite.
\end{proposition}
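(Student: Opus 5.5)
The plan is to reduce everything to a single positive-definiteness statement and verify it with the determinant recurrence of Proposition~\ref{prop:Ritz-ladder}. By the strict monotonicity \eqref{eq:Ritz-strict-monotonicity}, $\Lambda_9(9)<\Lambda_8(9)<\cdots<\Lambda_1(9)$. Hence \eqref{eq:N9-all-coordinate-above} is equivalent to $\Lambda_9(9)>49/4$, that is, to positive definiteness of $M_9^{(9)}-\tfrac{49}{4}I$. Alternatively, each $M_9^{(k)}$ is a leading principal submatrix of $M_9^{(9)}$, so Cauchy interlacing gives the same reduction. By Sylvester's criterion, this holds if and only if all leading principal minors $P_k(49/4;9)$, $1\le k\le9$, are positive.

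Rather than computing the minors themselves, which grow quickly, I will work with the ratios $r_k:=P_k(49/4;9)/P_{k-1}(49/4;9)$. By \eqref{eq:Pk-recurrence} they satisfy
\[
 r_1=18-\tfrac{49}{4}=\tfrac{23}{4},\qquad
 r_k=18k-\tfrac{49}{4}-\frac{2k^2(k-1)(10-k)}{r_{k-1}}\quad(2\le k\le9).
\]
Positivity of all $P_k$ is equivalent to $r_k>0$ for every $k$, proved inductively. The map $r\mapsto 18k-\tfrac{49}{4}-c_k/r$ with $c_k=2k^2(k-1)(10-k)\ge0$ is increasing in $r>0$. Therefore any positive rational lower bounds $\ell_{k-1}\le r_{k-1}$ propagate to lower bounds $\ell_k:=18k-\tfrac{49}{4}-c_k/\ell_{k-1}\le r_k$.

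For exact arithmetic, I will round each $\ell_k$ down to a simple rational at every step. A floating-point pass gives approximately
\[
 r_1\approx5.75,\; r_2\approx12.6,\; r_3\approx21.8,\; r_4\approx33.3,\; r_5\approx47.7,\;
 r_6\approx65.6,\; r_7\approx86.9,\; r_8\approx111,\; r_9\approx138.
\]
So I can use, for example, the bounds $\ell_2=12$, $\ell_3=20$, $\ell_4=30$, $\ell_5=44$, $\ell_6=62$, $\ell_7=84$, $\ell_8=110$, each checked by a one-line integer comparison. Then $\ell_9=149.75-1296/110>0$.

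The only real risk is the first steps, where $r_1=23/4$ is small and $c_2/r_1=256/23$ removes nearly half of the diagonal term. There the rounding must be done carefully: I will keep $r_1=23/4$ exactly and verify $23.75-256/23>12$ directly. After that, the diagonal $18k$ grows faster than $c_k/\ell_{k-1}$, which is bounded by about $31$, so the margins widen and crude integer bounds suffice. Once all $r_k>0$, the product $P_9=\prod_k r_k>0$ and all leading minors are positive. This gives positive definiteness, hence \eqref{eq:N9-all-coordinate-above} for every $k$.
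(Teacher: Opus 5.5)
Your proposal is correct and is essentially the paper's proof: Sylvester's criterion for $M_9^{(9)}-\tfrac{49}{4}I$, with positivity of the leading minors obtained by propagating lower bounds through the ratio form of the determinant recurrence, using that $r\mapsto 18k-\tfrac{49}{4}-c_k/r$ is increasing. The paper only rescales to $4D_k/D_{k-1}$ to clear the quarter and uses cruder bounds ($20,40,60,80,100,150,250,400,500$). Your bounds $12,20,30,44,62,84,110$ are tighter and all check out in exact arithmetic; the heuristic ``about $31$'' for $c_k/\ell_{k-1}$ should read about $33$, attained at $k=5$, but this affects nothing.
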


\begin{proof}
Let
\[
 D_k:=\det\!\left(M_9^{(k)}-\frac{49}{4}I\right),
 \qquad D_0:=1.
\]
By \eqref{eq:Pk-recurrence},
\begin{equation}\label{eq:N9-D-recurrence}
 D_k=\left(18k-\frac{49}{4}\right)D_{k-1}
     -2k^2(k-1)(10-k)D_{k-2}
 \qquad(2\le k\le9),
\end{equation}
with $D_1=23/4>0$.
As long as $D_{k-1}>0$, define
\[
 r_k:=4\frac{D_k}{D_{k-1}}.
\]
Then $r_1=23$ and \eqref{eq:N9-D-recurrence} becomes
\begin{equation}\label{eq:N9-r-recurrence}
 r_k=72k-49-
 \frac{32k^2(k-1)(10-k)}{r_{k-1}}.
\end{equation}
Starting from $r_1=23>20$, the recurrence gives successively the exact
lower bounds
\begin{align*}
 r_2&>95-\frac{1024}{20}=\frac{219}{5}>40,\\
 r_3&>167-\frac{4032}{40}=\frac{331}{5}>60,\\
 r_4&>239-\frac{9216}{60}=\frac{427}{5}>80,\\
 r_5&>311-\frac{16000}{80}=111>100,\\
 r_6&>383-\frac{23040}{100}=\frac{763}{5}>150,\\
 r_7&>455-\frac{28224}{150}=\frac{6671}{25}>250,\\
 r_8&>527-\frac{28672}{250}=\frac{51539}{125}>400,\\
 r_9&>599-\frac{20736}{400}=\frac{13679}{25}>500.
\end{align*}
Thus $r_k>0$ and hence $D_k>0$ for every $1\le k\le9$.  Sylvester's criterion \cite[Section~7.2]{HornJohnson2013} gives
\[
 M_9^{(9)}-\frac{49}{4}I>0.
\]
Every $M_9^{(k)}$ is a leading principal submatrix of $M_9^{(9)}$, so the
same positivity holds for $1\le k\le9$.  This is exactly
\eqref{eq:N9-all-coordinate-above}.
\end{proof}

Proposition~\ref{prop:N9-coordinate-obstruction} explains the change of
mechanism in the first dimension for which the present paper proves strictness
beyond the coordinate ladder.  The ladder succeeds successively in dimensions
twelve, eleven and ten, but exhausting it does not cross $A_9$.  The Gaussian
construction of Section~4 therefore supplies genuinely new radial information
rather than merely another coordinate shell.
We now complete the proof of Theorem~\ref{thm:main}.

\begin{proof}[Proof of Theorem~\ref{thm:main}]
Proposition~\ref{prop:upper} gives $C(N)\le A_N$ for every $N\ge3$, and
Proposition~\ref{prop:low-dimensional-values} gives equality in dimensions
three and four.  Proposition~\ref{prop:N9} treats dimension nine, while
Proposition~\ref{prop:coordinate-crossing} gives $C(10)<A_{10}$ and
\[
 C(N)\le\Lambda_2(N)<\min\{A_N,2N\}
 \qquad(N\ge11).
\]
These are precisely the remaining assertions of the theorem.
\end{proof}

\section{A spectral consequence and remaining dimensions}

The sign of the form at $A_N$ has an immediate spectral interpretation.
On $\ell^2(X_N)$, define the Dirichlet lattice Laplacian
\[
 (L_Nu)(n)=2Nu(n)-\sum_{\substack{m\sim n\\m\in X_N}}u(m),
\]
and let $M_N$ denote multiplication by $|n|^{-2}$.
Both are bounded self-adjoint operators, and
$\langle L_Nu,u\rangle=\cE_N[u]$,
$\langle M_Nu,u\rangle=\cD_N[u]$.
Set
\[
 \mathcal H_N:=L_N-A_NM_N.
\]
The potential tends to zero at infinity, so it is a compact multiplication
operator.  Consequently, a negative test value lies below the essential
spectrum and produces a discrete eigenvalue, rather than merely a negative
point in the essential spectrum.

\begin{corollary}\label{cor:spectrum}
For every $N\ge3$,
\[
 \sigma_{\mathrm{ess}}(\mathcal H_N)=[0,4N].
\]
For $N=3,4$, $\mathcal H_N$ is nonnegative.  For every $N\ge9$,
$\mathcal H_N$ has at least one negative isolated eigenvalue of finite
multiplicity.
\end{corollary}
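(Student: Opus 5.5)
The plan has three parts: identify the essential spectrum by compact perturbation, read off nonnegativity in dimensions three and four from Proposition~\ref{prop:low-dimensional-values}, and produce a negative eigenvalue for $N\ge9$ by combining the strict upper bounds of Theorem~\ref{thm:main} with the essential spectrum.

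\emph{Essential spectrum.} Since $A_NM_N$ is compact, Weyl's theorem gives $\sigma_{\mathrm{ess}}(\mathcal H_N)=\sigma_{\mathrm{ess}}(L_N)$. Compactness holds because $M_N$ is diagonal with entries $|n|^{-2}\to0$, so it is a norm limit of finite-rank truncations. To identify $\sigma_{\mathrm{ess}}(L_N)$, compare $L_N$ with the full lattice operator $-\lap$ on $\ell^2(\Z^N)$. Removing the origin changes the operator by a finite-rank perturbation, after identifying $\ell^2(\Z^N)=\ell^2(X_N)\oplus\mathbb C\delta_0$: the difference consists only of the matrix entries involving $\delta_0$. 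Finite-rank perturbations and finite-dimensional direct summands preserve essential spectra, so $\sigma_{\mathrm{ess}}(L_N)=\sigma(-\lap)$. By the Fourier transform, $-\lap$ is unitarily equivalent to multiplication by $\sum_j(2-2\cos\theta_j)$ on $[-\pi,\pi]^N$. Its range is $[0,4N]$, and a continuous multiplication operator on a connected torus has purely essential spectrum. This yields $\sigma_{\mathrm{ess}}(\mathcal H_N)=[0,4N]$.

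\emph{Nonnegativity for $N=3,4$.} By Proposition~\ref{prop:low-dimensional-values}, $\langle\mathcal H_Nu,u\rangle=\cE_N[u]-A_N\cD_N[u]\ge0$ for all finitely supported $u$ with $u(0)=0$. Such $u$ are dense in $\ell^2(X_N)$, and $\mathcal H_N$ is bounded, so the inequality extends by continuity to all of $\ell^2(X_N)$.

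\emph{Negative eigenvalue for $N\ge9$.} Theorem~\ref{thm:main} gives $C(N)<A_N$ for $N=9,10$ and for $N\ge11$, the latter via $\Lambda_2(N)<A_N$. In each case the variational definition \eqref{eq:def-CN} supplies a finitely supported $u$ with $\cE_N[u]<A_N\cD_N[u]$, hence $\langle\mathcal H_Nu,u\rangle<0$ and $\inf\sigma(\mathcal H_N)<0$. The essential spectrum is $[0,4N]$, so the part of the spectrum in $(-\infty,0)$ consists of isolated eigenvalues of finite multiplicity, and it is nonempty. The bottom of the spectrum is therefore such an eigenvalue.

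The main point needing care is the essential-spectrum step, specifically the bookkeeping in removing the origin: one must check that $L_N$ equals the compression of $-\lap$ to $\ell^2(X_N)$, which is what $L_N$'s definition says. The remaining steps follow directly from earlier results.
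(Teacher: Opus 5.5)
Your proposal is correct and follows the paper's proof essentially step for step. It uses the same ingredients: the Fourier multiplier $2\sum_j(1-\cos\theta_j)$ for the full-lattice operator, the finite-rank removal of the origin via $\ell^2(\Z^N)=\ell^2(X_N)\oplus\mathbb C\delta_0$, compactness of $M_N$ with Weyl's theorem, extension by density for $N=3,4$, and a finitely supported negative test function from Theorem~\ref{thm:main} that forces $\inf\sigma(\mathcal H_N)<0$ below the essential spectrum for $N\ge9$.
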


\begin{proof}
On the full lattice, the Fourier transform maps $-\lap$ to multiplication
on the torus by
\[
 2\sum_{j=1}^N(1-\cos\theta_j).
\]
Its essential range is $[0,4N]$: the multiplier is continuous, takes every
value in this interval, and the inverse image of every neighbourhood of such
a value has positive measure.  Thus the full-lattice operator has essential
spectrum $[0,4N]$.

Identify $\ell^2(\Z^N)=\ell^2(X_N)\oplus\mathbb C\ind_{\{0\}}$.
The operator $L_N\oplus0$ differs from the full-lattice Laplacian only in
matrix entries involving zero and its neighbours.  The difference therefore
has finite rank.  Adding or removing the one-dimensional zero summand does
not affect the essential spectrum, so
$\sigma_{\mathrm{ess}}(L_N)=[0,4N]$ by invariance under compact
perturbations.

To see compactness of $M_N$ explicitly, let $M_{N,R}$ be multiplication by
$|n|^{-2}\ind_{\{|n|\le R\}}$.  It has finite rank, and
$\|M_N-M_{N,R}\|\le R^{-2}$ for $R>0$.
Weyl's theorem on invariance of essential spectrum under compact
self-adjoint perturbations now gives the asserted essential spectrum of
$\mathcal H_N$; see \cite{Teschl2014}.

For $N=3,4$, the Hardy inequalities extend from finite support to
$\ell^2(X_N)$ by \eqref{eq:bounded-forms} and density.  Hence
$\mathcal H_N\ge0$.  For $N\ge9$, Theorem~\ref{thm:main} supplies a
finitely supported $u$ with
$\langle\mathcal H_Nu,u\rangle<0$.  The variational principle gives
$\inf\sigma(\mathcal H_N)<0$.  Since the essential spectrum starts at zero,
this spectral minimum is an isolated eigenvalue of finite multiplicity.
\end{proof}

This corollary does not assert uniqueness of the negative eigenvalue or
determine the value of $C(9)$.  It expresses the distinction between
nonnegative forms in dimensions three and four and the existence of a
negative bound state from dimension nine onward.

The theorem leaves a four-dimensional transition window $5\le N\le8$.
Proposition~\ref{prop:shifted-power-fails} shows that the simple shifted-power
mechanism already breaks down at $N=5$, so equality in this range, if it
persists, must arise from a more structured radial or angular mechanism.  It
is therefore natural to ask whether dimension nine is the first dimension in
which the continuum scalar coefficient ceases to be sharp on the lattice.

A natural conjecture is
\[
 C(N)=A_N\quad(3\le N\le8),
 \qquad
 C(N)<A_N\quad(N\ge9).
\]
The substantive open part of this conjecture is the equality
$C(N)=A_N$ for $5\le N\le8$.

\appendix
\section{Exact rational certificate for the dimension-nine Gaussian test}
\label{app:N9-certificate}

This appendix contains only the elementary arithmetic behind the one-sided
bounds used in Section~4.  The analytic estimates---the theta majorant,
Gamma--Laplace representation, Jacobi transformation and polynomial
minorant---remain in the main text.  The purpose here is to make the final
strict comparison independently checkable without decimal evaluation or
computer-assisted interval arithmetic.

\subsection{Energy certificate}\label{app:N9-energy-certificate}

For $(i,j)=(1,1)$, use $K=2$ and $J=1$ in
\eqref{eq:ThetaK}--\eqref{eq:GJ}.  Direct substitution gives
\[
 \Theta_2(q_1^2)
 =1+2q_1^2+2q_1^8+\frac{2q_1^{18}}{1-q_1^{14}}
 <\frac{1157}{1000},
\]
and
\[
 G_1(q_1,q_1)
 =(1-q_1)^2+q_1^2(1-q_1^3)^2
       +\frac{q_1^8}{1-q_1^{10}}<\frac{1187}{2000}.
\]
Since $(1157/1000)^8<803/250$,
\[
 T_{11}<18\cdot256^2\cdot\frac{1187}{2000}\cdot\frac{803}{250}
       <2\,248\,800.
\]

For $(i,j)=(1,2)$, take $K=J=2$.  The corresponding rational comparisons are
\[
 \Theta_2(q_1q_2)<\frac{1407}{1000},\qquad
 G_2(q_1,q_2)<\frac{1633}{5000},\qquad
 \left(\frac{1407}{1000}\right)^8<\frac{77}{5},
\]
and therefore
\[
 T_{12}<36\cdot256\cdot16\cdot\frac{1633}{5000}\cdot\frac{77}{5}
       <741\,700.
\]
For $(i,j)=(1,3)$, again take $K=J=2$:
\[
 \Theta_2(q_1q_3)<\frac{61}{40},\qquad
 G_2(q_1,q_3)<\frac{1149}{10000},\qquad
 \left(\frac{61}{40}\right)^8<\frac{293}{10},
\]
so
\[
 T_{13}<36\cdot256\cdot\frac{1149}{10000}\cdot\frac{293}{10}
       <31\,100.
\]

For $(i,j)=(2,2)$, choose $K=2$, $J=3$:
\[
 \Theta_2(q_2^2)<\frac{2187}{1000},\qquad
 G_3(q_2,q_2)<\frac{3313}{10000},\qquad
 \left(\frac{2187}{1000}\right)^8<524,
\]
which implies
\[
 T_{22}<18\cdot16^2\cdot\frac{3313}{10000}\cdot524<800\,000.
\]
For $(i,j)=(2,3)$, choose $K=2$, $J=4$:
\[
 \Theta_2(q_2q_3)<\frac{1381}{500},\qquad
 G_4(q_2,q_3)<\frac{889}{5000},\qquad
 \left(\frac{1381}{500}\right)^8<3400,
\]
and hence
\[
 T_{23}<36\cdot16\cdot\frac{889}{5000}\cdot3400<348\,300.
\]

Finally, for $(i,j)=(3,3)$, take $K=4$, $J=6$.  The estimates are
\[
 1+2(q_3^2+q_3^8+q_3^{18}+q_3^{32})
       +\frac{2q_3^{50}}{1-q_3^{22}}<\frac{4341}{1000},
\]
and
\[
 \sum_{j=0}^{6}q_3^{2j^2}(1-q_3^{2j+1})^2
       +\frac{q_3^{98}}{1-q_3^{30}}<\frac{89}{500}.
\]
Together with $(4341/1000)^8<126200$, these give
\[
 T_{33}<18\cdot\frac{89}{500}\cdot126200<404\,400.
\]
All comparisons above are between positive rational numbers.  Those
involving eighth powers may be checked by three successive squarings, and
Lemma~\ref{lem:theta-majorant} has already absorbed the infinite tails.

\subsection{Denominator certificate}\label{app:N9-denominator-certificate}

We first record the elementary constants used to rationalize
\eqref{eq:N9-F-Poisson}.  The exponential series gives
\[
 e\le\frac83+\frac{1/24}{1-1/5}=\frac{87}{32}<\frac{11}{4},
\]
while the classical bound $\pi>223/71$ implies
\[
 \pi>\pi_0:=\frac{157}{50}.
\]
Direct lower rounding in \eqref{eq:N9-p-polynomial} gives
\[
 p(4/11)>
 \frac{65+95+107+88+54+28+15+8}{10}=46.
\]
Since $e^{-1}>4/11$, the integral representation and
\eqref{eq:N9-F-polynomial} imply $F(e^{-1})>46$.

For a rational upper bound on $-\log x$, put
$y=(1-x)/(1+x)$.  Then
\begin{equation}\label{eq:app-log-rational}
 -\log x
 =2\sum_{k=0}^{\infty}\frac{y^{2k+1}}{2k+1}
 \le2y+\frac23y^3+\frac{2y^5}{5(1-y^2)}.
\end{equation}
For
\[
 (x_1,x_2,x_3)=\left(\frac{324}{625},\frac{414}{625},\frac{529}{625}\right)
\]
the corresponding values of $y$ are $301/949$, $211/1039$, and $48/577$.
Substitution in \eqref{eq:app-log-rational} gives
\[
 -\log x_1<\frac{329}{500},\qquad
 -\log x_2<\frac{103}{250},\qquad
 -\log x_3<\frac{167}{1000}.
\]

Suppose $t=-\log x<T<1$ and $r^2T<\pi_0$.  From
\eqref{eq:N9-F-Poisson}, the facts $t\ge1-x$ and
$\sqrt{\pi_0}<71/40$ give
\begin{equation}\label{eq:app-B-rational}
 F(x)>46-x+\frac27\pi_0^4
       \left(\frac r{T^3}-\frac{71}{40}\right).
\end{equation}
In each application below the parenthesis is positive; hence the further
lower bound $\pi_0^4>486/5$ preserves the inequality.

For $x_1=324/625$, take $T=329/500$ and $r=273/125$.  Then
\[
 \pi_0-r^2T=\frac{11209}{7812500}>0,
 \qquad \frac r{T^3}>\frac{1533}{200},
\]
and \eqref{eq:app-B-rational} yields
\[
 F(324/625)>209+\frac{242}{4375}>209.
\]
For $x_2=414/625$, take $T=103/250$, $r=69/25$; for
$x_3=529/625$, take $T=167/1000$, $r=542/125$.  The required comparisons are
\[
 \pi_0-\left(\frac{69}{25}\right)^2\frac{103}{250}
   =\frac{121}{78125}>0,
 \qquad
 \frac{69/25}{(103/250)^3}>\frac{7893}{200},
\]
and
\[
 \pi_0-\left(\frac{542}{125}\right)^2\frac{167}{1000}
   =\frac{489}{1953125}>0,
 \qquad
 \frac{542/125}{(167/1000)^3}>\frac{93097}{100}.
\]
Consequently,
\[
 F(414/625)>1092+\frac{187}{4375}>1092,
 \qquad
 F(529/625)>25850+\frac{1979}{8750}>25850.
\]

For the three smaller products, direct substitution in the positive
polynomial \eqref{eq:N9-p-polynomial} gives
\[
 256^2p(49/625)>130000,
\]
\[
 2\cdot256\cdot16\,p(126/625)>78900,
 \qquad
 2\cdot256\,p(161/625)>8700.
\]
For example, retaining the first six positive monomials in the first
inequality and rounding each downward gives
\[
 256^2p(49/625)>92484+29003+7074+1257+167+18=130003.
\]
The other two inequalities are verified by the same direct substitution.
This completes the rational certificate used in
\eqref{eq:N9-small-product-summary}.

\section*{Statements and Declarations}

\noindent\textbf{Funding.}
This work was supported by ANID FONDECYT Grant 1260057.
\medskip

\noindent\textbf{Data Availability.}
No datasets were generated or analysed during the current study.

\medskip
\noindent\textbf{Competing Interests.}
The author has no relevant financial or non-financial interests to disclose.

\end{document}